\newlist{abbrv}{itemize}{1}
\setlist[abbrv,1]{label=,labelwidth=1in,align=parleft,itemsep=0.1\baselineskip,leftmargin=!}
\newcommand{\mr}{\mathbf}
\newtheorem{thm}{Theorem}[section]
\newtheorem{lm}{Lemma}[section]
\newtheorem{prop}{Proposition}[section]
\newtheorem{rmk}{Remark}[section]
\newtheorem{cor}{Corollary}[section]
\newtheorem{claim}{Claim}[section]
\newcommand{\prob}{\mathbb{P}}
\newcommand{\intZ}{\mathbb{Z}}
\newcommand{\realR}{\mathbb{R}}
\newcommand{\complexC}{\mathbb{C}}
\newcommand{\FGUE}{F_{GUE}}
\newcommand{\iid}{i.i.d.\,}
\newcommand{\polylog}{\mathrm{Li}}
\newcommand{\dist}{{\rm{dist}}\,}
\newcommand{\EE}{\mathbb{E}}
\newcommand{\Var}{\mathrm{Var}}
\newcommand{\FS}{F_2}
\newcommand{\FB}{F_B}
\newcommand{\FU}{F_U}
\newcommand{\Ks}{\mathcal{K}^{(2)}}
\newcommand{\inodes}{S}
\newcommand{\conf}{\mathcal{X}}
\renewcommand{\dd}{{\mathrm d}}
\newcommand{\ii}{\mathrm{i}}
\newcommand{\ddbar}[1]{\frac{{\mathrm d}#1}{2\pi {\mathrm i}#1}}
\newcommand{\ddbarr}[1]{\frac{{\mathrm d}#1}{2\pi {\mathrm i}}}
\newcommand{\LL}{{\rm left}}
\newcommand{\RR}{{\rm right}}
\newcommand{\zz}{\mathbf{z}}
\newcommand{\rr}{\mathbbm{r}}
\newcommand{\yy}{\ell}
\newcommand{\roots}{R}
\newcommand{\consts}{\mathcal{C}_N^{(2)}}
\newcommand{\hftn}{\mathfrak{h}}
\newcommand{\gee}{G}
\newcommand{\Cr}[1]{#1}
\newcommand{\Cb}[1]{#1}
\numberwithin{equation}{section} 
\numberwithin{thm}{section}
\author{Zhipeng Liu\footnote{Courant Institute of Mathematical Sciences, New York University, New York, NY 10012
\newline email: \texttt{zhipeng@cims.nyu.edu}}}
\date{\today}
\begin{document}
\title{Height fluctuations of stationary TASEP on a ring in relaxation time scale}
\maketitle

\begin{abstract}
We consider the totally asymmetric simple exclusion process on a ring with stationary initial conditions. The crossover between KPZ dynamics and equilibrium dynamics occurs when time is proportional to the $3/2$ power of the ring size. We obtain the limit of \Cr{the} height function along the direction of \Cr{the} characteristic line in this time scale. The two-point covariance function in this scale is also discussed.
\end{abstract}

\section{Introduction}


In this paper we consider the totally asymmetric simple exclusion process (TASEP) on a ring of size $L$ which we denote by $\intZ_L$. The dynamics of TASEP on the ring is the same as that of TASEP on $\intZ$ except the particle at the site $\overline{L-1}$, once it jumps, moves to the site $\overline0$ if $\overline0$ is empty, here the $\overline{i}$ denotes the element $i\pmod L$ in $\intZ_L$ for $i\in\{0,1,\cdots, L-1\}$. Let $\eta_i=\eta_i(t)$ the occupation variable of this model, $0\le i\le L-1$. $\eta_i$ is $1$ if the site $\overline{i}$ is occupied or $0$ if the site $\overline{i}$ is empty. We extend the occupation variable to $\intZ$ periodically by defining $\eta_i(t)=\eta_{i+L}(t)$ for all $i\in\intZ$. Define the following height function
\begin{equation}
h_t(\yy)=\begin{cases}
2J_0(t)+\sum_{j=1}^{\yy}(1-2\eta_j(t)),&\yy\ge 1,\\
2J_0(t),&\yy=0,\\
2J_0(t)-\sum_{j=\yy+1}^0(1-2\eta_j(t)),&\yy\le -1,
\end{cases}
\end{equation}
where $J_0(t)$ counts the number of particles jumping through the bond from $0$ to $1$ on $\intZ_L$ during the time interval $[0,t]$. Note that $h_t(\yy)-h_0(\yy)=2J_\yy(t)$, where $J_\yy(t)$ counts the number of particles jumping through the bond from $\yy\pmod L$ to $\yy+1\pmod L$ on $\intZ_L$ during the time interval $[0,t]$. Although $\eta_\yy(t), J_\yy(t)$ are both periodic in $\yy$, $h_t(\yy)$ is not periodic except when the system is half-filled. Indeed, we have $h_t(\yy+L)=h_t(\yy)+(L-2N)$ for all $\yy\in\intZ$ and  $t\ge 0$, where $N=\sum_{j=0}^{L-1}\eta_j$ is the number of particles on the ring.

We are interested in the fluctuations of $h_t(\yy)$ when $t$ and $\yy$ both increase \Cr{with} order $O(L^{3/2})$,  and $L, N$ go to infinity proportionally. The scale $t=O(L^{3/2})$ is called the relaxation time scale, which was first studied by Gwa and Spohn \cite{Gwa-Spohn92}. At this relaxation time scale, one expects to see a crossover between the KPZ dynamics and the Gaussian dynamics and hence the fluctuations are of great interest to both math and physics communities. The crossover limiting distributions were obtained only recently by Prolhac \cite{Prolhac16} and Baik and Liu \cite{Baik-Liu16}. In \cite{Prolhac16}, Prolhac obtained (not rigorously) the limit of the current fluctuations for step, flat and stationary initial conditions in the half particle system (with the restriction $L=2N$). Independently, Baik and Liu also obtained the limit in a more general setting of $N$ and $L$ for flat and step initial conditions in \cite{Baik-Liu16}\footnote{The formulas of the limiting distribution in two papers \cite{Prolhac16} and \cite{Baik-Liu16} are slightly different and it is yet to be proved that they are indeed the same. The numeric plots show  that they do agree.}. 
The main goal of this paper is to extend the work of \cite{Baik-Liu16} to the stationary initial condition \Cr{case} and prove the rigorous limit theorem of $h_t(\yy)$ in \Cb{the} relaxation time scale. 
Compared to \cite{Prolhac16}, there are some other differences \Cr{besides} the rigorousness: We consider a more general setting of stationary initial conditions than \Cb{the half-filled one in} \cite{Prolhac16}, and \Cb{a more general object}, the height function $h_t(\yy)$, than the current in \cite{Prolhac16}, which is equivalent to $h_t(0)$. Hence the limiting distribution obtained in this paper, $\FU(x;\tau,\gamma)$ in Theorem~\ref{thm:limit_height_uniform}, contains two parameters of time $\tau$ and location $\gamma$, in contrast to that of only time parameter in \cite{Prolhac16}.

Due to the ring structure, the number of particles is invariant. Hence it is natural to consider the following \emph{uniform} initial condition of $N$ particles: initially all possible configurations of $N$ particles on the ring of size $L$ are of equal probability, i.e., ${L\choose N}^{-1}=\frac{N!(L-N)!}{L!}$. This initial condition is stationary, and is the unique one for fixed number of particles $N$ and ring size $L$ \cite{Meakin-Ramanlal-Sander-Ball86}. 

For this uniform initial condition, there is a \emph{characteristic} line $\yy=(1-2\rho)t$ in the space-time plane\footnote{\Cr{It is the characteristic line  of the related Burger's equation in the space-time plane. See the appendix of }\cite{Baik-Liu16b} \Cr{for discussions on the Burger's equation related to TASEP on a ring}.}, here $\rho=NL^{-1}$ is the density of the system. The main theorem of this paper is \Cb{about} the fluctuations of \Cr{$h_{t}(\yy)$} near the characteristic line in \Cb{the} relaxation time scale. 

\begin{thm}
	\label{thm:limit_height_uniform}
	Let $c_1$ and $c_2$ be two fixed constants satisfying $0<c_1<c_2<1$. Suppose $N_L$ is a sequence of integers such that $c_1L\le N_L\le c_2L$ for all \Cr{sufficiently large} $L$.  We consider the TASEP on a ring of size $L$ with  $N_L$ particles. Assume that they satisfy the uniform initial condition. Denote $\rho_L=N_L/L$. 
	Let $\tau$ and $w$ be two fixed constants satisfying $\tau>0$ and $w\in\realR$.  Suppose 
	\begin{equation}
	\label{eq:parameter_time}
	t_L=\frac{\tau}{\sqrt{\rho_L(1-\rho_L)}} L^{3/2}.
	\end{equation}
	Then along the line
	\begin{equation}
	\label{eq:aux_2016_09_03_01}
	\yy_L=(1-2\rho_L)t_L+2w(\rho_L(1-\rho_L))^{1/3}t_L^{2/3},
	\end{equation}
	 we have
	\begin{equation}
	\label{eq:aux_2016_09_03_03}
	\lim_{L\to\infty}\prob\left(\frac{h_{t_L}(\yy_L)-(1-2\rho_L)\yy_L-2\rho_L(1-\rho_L)t_L}{-2\rho_L^{2/3}(1-\rho_L)^{2/3}t_L^{1/3}}\le x\right)=\FU(\tau^{1/3}x;\tau,2w\tau^{2/3})
	\end{equation}
	for each $x\in\realR$. Here $\FU(x;\tau,\gamma)$ is a distribution function defined in~\eqref{eq:def_FB} for any $\tau>0$ and $\gamma=2w\tau^{2/3}\in\realR$. It satisfies $\FU(x;\tau,\gamma)=\FU(x;\tau,\gamma+1)$ and $\FU(x;\tau,\gamma)=\FU(x;\tau,-\gamma)$.
\end{thm}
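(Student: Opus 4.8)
\noindent\emph{Proof proposal.}

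The plan is to reduce Theorem~\ref{thm:limit_height_uniform} to the machinery developed in \cite{Baik-Liu16} for the step and flat cases, by first producing a finite-$L$ formula for the stationary one-point distribution and then carrying out a steepest-descent analysis in the relaxation scale. First I would recall the exact finite-time formula for $\prob_Y\!\big(h_t(\yy)\le h\big)$ for periodic TASEP started from a \emph{fixed} configuration $Y$ of $N$ particles, written as a contour integral of a Fredholm determinant whose kernel is assembled from the Bethe roots $z$ (the solutions of a polynomial equation of degree $L$ whose coefficients depend on $h$, and which after rescaling condense on an explicit curve), and in which the initial data enter only through an explicit multiplicative factor $\mathcal E_Y(z)$. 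Averaging against the uniform measure ${L\choose N}^{-1}$ amounts to evaluating $\EE_Y[\mathcal E_Y]$, i.e.\ a sum over all $N$-element subsets of $\intZ_L$. I would carry out this sum in closed form by the standard generating-variable device: realize the uniform $N$-particle measure as the Bernoulli$(\rho_L)$ product measure conditioned on the total number of particles, and extract the coefficient of the $N$-th power of an auxiliary variable $\xi$ by a small contour integral around the origin. The sum over $Y$ then factorizes, leaving a representation of the stationary one-point distribution as a double contour integral — over $\xi$ and over the Fredholm-determinant variable — with the stationary average absorbed into a modified but still explicit prefactor.

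Next comes the asymptotic analysis, which is the main body of the argument. Substituting the scalings \eqref{eq:parameter_time}, \eqref{eq:aux_2016_09_03_01} and the centered, rescaled height value of \eqref{eq:aux_2016_09_03_03} into the finite-$L$ formula, one finds that, after a logarithmic change of variables, the Bethe roots condense on an explicit curve and the exponent ($t$ times dispersion plus $\yy$ times momentum) has its critical points governed precisely by the characteristic-line condition — which is why the linear-in-$\yy$ and linear-in-$t$ terms dropped out after centering. Rescaling the roots by $L^{-1/2}$ (equivalently by $\tau^{1/3}t_L^{-1/3}$, which is what produces the factor $\tau^{1/3}$ inside $\FU$) and placing the $\xi$-contour through its own saddle, the kernel converges to a limiting kernel on a fixed contour in $\complexC$, and the prefactor converges to an explicit function of $(x,\tau,\gamma)$ with $\gamma=2w\tau^{2/3}$; this identifies the limit with $\FU(\tau^{1/3}x;\tau,\gamma)$, $\FU$ being exactly the resulting limiting double contour integral, namely the object defined in \eqref{eq:def_FB}.

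The genuine obstacle is to upgrade these pointwise convergences to convergence of the Fredholm determinant and of the two contour integrals, uniformly in the remaining parameters. Concretely one must: deform the $z$-contours onto steepest-descent paths through the critical points and bound the off-critical contributions by an exponentially small factor; establish trace-norm and Hadamard-type bounds on the kernel that are uniform in $L$ so that dominated convergence applies; and, in parallel, control the $\xi$-integral and analyze its saddle jointly with the Bethe-root saddle — this last point is the feature that is genuinely new relative to \cite{Baik-Liu16}, and it is the stationary prefactor that generates the Baik--Rains-type deformation carried by $\FU$. All estimates must moreover be uniform over $\rho_L\in[c_1,c_2]$, so one also needs Lipschitz continuity of the limiting integrand in $\rho_L$ in order to replace $\rho_L$-dependent quantities by their $\tau$- and $\gamma$-limits.

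Finally, the two symmetries of $\FU$ can be read off without further analysis. Periodicity $\FU(x;\tau,\gamma)=\FU(x;\tau,\gamma+1)$ is in fact an identity at the level of random variables: replacing $\yy_L$ by $\yy_L+L$ shifts the parameter $w$ by $L/(2(\rho_L(1-\rho_L))^{1/3}t_L^{2/3})$, hence shifts $\gamma$ by exactly $1$, while the relation $h_t(\yy+L)=h_t(\yy)+(L-2N)$ together with $(1-2\rho_L)L=L-2N_L$ shows that the centered, rescaled height in \eqref{eq:aux_2016_09_03_03} is literally unchanged. Evenness $\FU(x;\tau,\gamma)=\FU(x;\tau,-\gamma)$ comes from the particle--hole-and-reflection symmetry $\eta_i\mapsto 1-\eta_{-i}$, which maps stationary TASEP with $N$ particles to stationary TASEP with $L-N$ particles, preserves $\rho_L(1-\rho_L)$ (hence $\tau$), sends the current through the bond $\yy\mapsto\yy+1$ to the current through $-\yy-1\mapsto-\yy$, and therefore sends the parameter $w$ to $-w$ while leaving the law of the centered height invariant; hence the $w$-family and the $(-w)$-family have the same limit.
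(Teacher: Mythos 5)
Your overall plan — sum over the stationary ensemble to reduce to the step-initial-condition machinery of Baik--Liu, then run a relaxation-scale steepest-descent — is the right framework, and the symmetry arguments at the end are correct (the periodicity argument in fact reproduces the paper's remark after~\eqref{eq:aux_2016_10_06_01}). But the middle of the argument has a genuine gap. The paper's exact finite-$L$ formula~\eqref{eq:height_fluctuation} is not the step-initial-condition quantity with a modified prefactor; it is the forward difference $\Delta_k$ applied to that quantity, where $\Delta_k f(k)=f(k+1)-f(k)$ acts in the particle label. This structure comes out of Lemma~\ref{lm:summation_initial_conf}: summing the $Y$-dependent determinant over $Y\in\mathcal{Y}_N(L)$ by a direct telescoping argument (no auxiliary generating variable is introduced) produces a \emph{difference of two determinants}, which is precisely what $\Delta_k$ encodes. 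In the limit, after rescaling by $\sqrt{1-\rho}N^{1/2}$, this $\Delta_k$ becomes $\frac{\dd}{\dd x}$, and that is exactly the derivative appearing inside the definition of $\FU$ in~\eqref{eq:def_FB}. Your proposal instead reaches for a Bernoulli-conditioning trick with an auxiliary $\xi$-contour and predicts a limiting "double contour integral"; but $\FU$ in~\eqref{eq:def_FB} is a \emph{single} $z$-contour integral of an $x$-derivative, and nothing in your sketch explains how the $\xi$-integral would collapse into or expose that derivative. Moreover, the factorization claim ("the sum over $Y$ then factorizes") is unsubstantiated: the $Y$-dependent determinant has $N$ coupled columns, and summing over $N$-element subsets with Bernoulli weights does not factorize site by site, whereas the paper's telescoping exploits the ordered structure $-L+1\le y_1<\cdots<y_N\le 0$ and the one-column-per-$y_j$ dependence.

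The second issue, connected to the first: describing the stationary averaging as "absorbed into a modified but still explicit prefactor" misses where the real asymptotic work lies. The $\Delta_k$ difference hits the whole product $\consts(\zz;k,\yy+1)\det(I+K_{\zz;k,\yy+1}^{(2)})$, so in the limit the Leibniz rule produces the term $\frac{\dd}{\dd x}\det(I-\Ks_{z;x})$. Proving that this differentiated Fredholm determinant exists and is the uniform-in-$z$ limit of the rescaled $\Delta_k\det(I+\tilde K_{\zz;k,\yy}^{(2)})$ — with Hadamard/trace-type bounds uniform in $L$ and $z$ to justify dominated convergence — is the genuinely new ingredient relative to \cite{Baik-Liu16}; this is the content of Lemmas~\ref{lm:moments_convergent} and~\ref{lm:uniform_bound_difference_determinant}. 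Any correct argument must confront this differentiation of the Fredholm determinant explicitly; a scalar-prefactor mechanism cannot produce the Baik--Rains-type limit on the right-hand side of~\eqref{eq:aux_2016_09_03_03}.
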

\begin{rmk}
	\label{rmk:01}
	In \cite{Prolhac16}, Prolhac obtained~\eqref{eq:aux_2016_09_03_03} when \Cr{$\yy_L=0$} and $\rho_L=1/2$ (and hence $w=0$, $\gamma=0$) with a different formula of the limiting distribution. His proof, as mentioned before, is not completely rigorous.
\end{rmk}

Note that if we write $\gamma=2w\tau^{2/3}$, then the line~\eqref{eq:aux_2016_09_03_01} can be rewritten as
\begin{equation}
\label{eq:aux_2016_10_06_01}
\yy_L=(1-2\rho_L)t_L+\gamma L.
\end{equation}
This expression gives an intuitive reason why the limiting function $\FU(x;\tau,\gamma)$ is periodic on $\gamma$: It is the periodicity of the shifted height function $h_{t_L}(\yy_L+L)-(1-2\rho_L)(\yy_L+L)=h_{t_L}(\yy_L)-(1-2\rho_L)\yy_L$.


To better understand the parametrization in the above theorem, we compare it with the infinite TASEP with stationary condition, i.e., the stationary TASEP on $\intZ$. Suppose initially each site in $\intZ$ is occupied independently with probability $p$. Then the height fluctuation converges along the line 
$\yy=(1-2p)t+ 2w(p(1-p))^{1/3}t^{2/3}$ for any given constant $w\in\realR$, see \cite{Ferrari-Spohn06, Baik-Ferrari-Peche10},
\begin{equation}
\label{eq:aux_2016_09_03_04}
\lim_{t\to\infty}\prob\left(\frac{h_t(\yy)-(1-2p)\yy-2p(1-p)t}{-2p^{2/3}(1-p)^{2/3}t^{1/3}}\le x\right)=F_w(x),\quad x\in\realR,
\end{equation}
where $F_w(s)$ is the Baik-Rains distribution defined in \cite{Baik-Rains00}\footnote{In \cite{Baik-Rains00}, $F_w(s)$ was denoted by $H(s+w^2;w/2,-w/2)$.}.
Theorem~\ref{thm:limit_height_uniform} of this paper shows that for the stationary TASEP on a ring with uniform initial condition in relaxation time scale, similar limiting laws hold near the characteristic line. 
The difference is that 
for the ring TASEP, the fluctuations have a periodicity on the parameter $\gamma=2w\tau^{2/3}$, which is not present in the infinite TASEP model.

\vspace{0.3cm}

The leading terms $(1-2\rho_L)\yy_L$ and $2\rho_L(1-\rho_L)t_L$ in $h_{t_L}(\yy_L)$ can be explained as \Cb{follows}. The first term $(1-2\rho_L)\yy_L$ measures the change of height along the direction $\yy_L$: For fixed $t_L$, $h_{t_L}(\yy_L)-h_{t_L}(0)$ grows as $(1-2\rho_L)\yy_L$ in the leading order \Cb{since} $h_{t_L}(\yy_L+L)=h_{t_L}(\yy_L)+(1-2\rho_L)L$. The second term $2\rho_L(1-\rho_L)t_L$ measures the time-integrated current at a fixed location: $h_{t_L}(\yy_L)-h_0(\yy_L)=2J_0(t_L)$ which grows as $2\rho_L(1-\rho_L)t_L$ in the leading order.

\vspace{0.3cm}
As an application of Theorem~\ref{thm:limit_height_uniform}, we can express the limit of two-point covariance function in terms of $\FU(x;\tau,\gamma)$. Recall the occupation variable $\eta_\yy(t)$ at the beginning of the paper. Define the two-point covariance function
\begin{equation}
S(\yy;t):=\EE\left(\eta_\yy(t)\eta_0(0)\right)-\rho^2
\end{equation}
where $\rho=N/L$ is the system density. It is known that for the stationary TASEP, there is a relation between this two-point function $S(\yy;t)$ and the height function $h_\yy(t)$: $8S(\yy;t)=\Var(h_t(\yy+1))-2\Var(h_t(\yy))+\Var(h_t(\yy-1))$. This relation was proved for the infinite TASEP in \cite{Prahofer-Spohn02a} but the proof is also valid for TASEP on a ring after minor modifications. Using this identity and the tail estimate which is provided in the appendix~\ref{sec:estimates}, we obtain the following result. The proof is almost the same as that for the stationary TASEP on $\intZ$, see \cite{Baik-Ferrari-Peche14}, and hence we omit it.


\begin{cor}
	\label{cor:space_time_covariance_uniform}
	Suppose $N_L, t_L$ and $\yy_L$ are defined as in Theorem~\ref{thm:limit_height_uniform} with the same constants $\tau>0$ and $\gamma=2w\tau^{2/3}\in\realR$. Then we have
	\begin{equation}
	\lim_{L\to\infty}\frac{2t_L^{2/3}{S(\yy_L;t_L)}}{\rho_L^{2/3}(1-\rho_L)^{2/3}}=g''_U(\gamma;\tau),
	\end{equation}
	if integrated over smooth functions in $\gamma$ with compact support, where
	\begin{equation}
	g_U(\gamma;\tau):=\tau^{2/3}\int_{\realR}x^2\dd \FU(x;\tau,\gamma).
	\end{equation}
\end{cor}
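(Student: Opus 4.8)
The plan is to start from the variance identity $8S(\yy;t)=\Var(h_t(\yy+1))-2\Var(h_t(\yy))+\Var(h_t(\yy-1))$, which is the discrete second difference in $\yy$ of the variance of the height function, and to combine it with the scaling limit of Theorem~\ref{thm:limit_height_uniform}. First I would rewrite the theorem's convergence statement in terms of the centered and rescaled height variable
\begin{equation}
\label{eq:aux_cor_01}
\chi_L(\gamma):=\frac{h_{t_L}(\yy_L)-(1-2\rho_L)\yy_L-2\rho_L(1-\rho_L)t_L}{-2\rho_L^{2/3}(1-\rho_L)^{2/3}t_L^{1/3}},
\end{equation}
so that by Theorem~\ref{thm:limit_height_uniform} we have $\tau^{1/3}\chi_L(\gamma)\Rightarrow X_{\tau,\gamma}$, where $X_{\tau,\gamma}$ has distribution $\FU(\,\cdot\,;\tau,2w\tau^{2/3})$ with $\gamma=2w\tau^{2/3}$. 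Since the leading deterministic terms $(1-2\rho_L)\yy_L$ in \eqref{eq:aux_cor_01} are linear in $\yy_L$ and the normalization $-2\rho_L^{2/3}(1-\rho_L)^{2/3}t_L^{1/3}$ is independent of $\yy_L$, taking the discrete second difference in $\yy_L$ of $\Var(h_{t_L}(\yy_L))$ annihilates the linear deterministic part and produces $4\rho_L^{4/3}(1-\rho_L)^{4/3}t_L^{2/3}$ times the second difference in $\gamma$ of $\Var(\chi_L(\gamma))$. Matching the $\yy_L\mapsto\yy_L\pm 1$ increments to increments $\gamma\mapsto\gamma\pm L^{-1}$ in the parametrization \eqref{eq:aux_2016_10_06_01}, and rescaling, one sees that $2t_L^{2/3}S(\yy_L;t_L)\rho_L^{-2/3}(1-\rho_L)^{-2/3}$ is, up to a factor involving $\tau^{2/3}$ and an $L^2$ from the discrete second difference step of size $L^{-1}$, the discrete second derivative in $\gamma$ of $\tau^{2/3}\Var(X_{\tau,\gamma})=\tau^{2/3}\int_\realR x^2\,\dd\FU(x;\tau,\gamma)$ up to recentering, i.e. of $g_U(\gamma;\tau)$. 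This is exactly the claimed identity once the discrete second difference is replaced by $g_U''(\gamma;\tau)$, which is why the statement is phrased in the distributional (integrate-against-test-functions) sense.

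The two nontrivial analytic inputs are: (i) upgrading the convergence in distribution of $\tau^{1/3}\chi_L(\gamma)$ to convergence of second moments $\EE[\chi_L(\gamma)^2]\to\tau^{-2/3}\int x^2\,\dd\FU(x;\tau,\gamma)$ (equivalently of variances, after checking that the mean also converges), and (ii) justifying the passage from the discrete second difference in $\gamma$ to the second derivative $g_U''$. For (i), convergence in distribution plus uniform integrability of $\chi_L(\gamma)^2$ gives the moment convergence; uniform integrability follows from uniform-in-$L$ tail bounds on $\chi_L(\gamma)$, which is precisely what the tail estimates in Appendix~\ref{sec:estimates} supply (exponential decay of the upper tail and the appropriate decay of the lower tail, uniformly for all large $L$). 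For (ii), one integrates $S(\yy_L;t_L)$ against a smooth compactly supported test function $\phi(\gamma)$, converts the sum over $\yy_L$ into a Riemann sum in $\gamma$ with mesh $L^{-1}$, applies summation by parts twice to move the discrete second difference onto $\phi$, takes the limit using (i) together with local uniformity of the convergence in $\gamma$, and finally integrates by parts twice in the continuum to land $g_U''(\gamma;\tau)$ against $\phi(\gamma)$; the regularity of $g_U(\gamma;\tau)$ in $\gamma$ needed here again comes from the uniform tail estimates and the smoothness of $\FU$ in its parameters established earlier.

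The main obstacle is step (i): one needs tail bounds on $\chi_L(\gamma)$ that are uniform in $L$ (not just for the fixed limiting object) and summable enough to control the second moment, and these must hold uniformly for $\gamma$ ranging over the support of the test function. Once the appendix provides these estimates, the rest is the bookkeeping of discrete-to-continuum second differences and summation by parts, which is why — as the text notes — the argument is essentially identical to the $\intZ$ case treated in \cite{Baik-Ferrari-Peche14}, and I would simply cite that computation rather than reproduce it, emphasizing only the points where the ring geometry enters, namely the periodicity $\FU(x;\tau,\gamma)=\FU(x;\tau,\gamma+1)$ which makes the Riemann sum over one period well-defined and the use of the ring tail estimates in place of the line ones.
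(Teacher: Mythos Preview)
Your proposal is correct and follows precisely the approach the paper indicates: the paper does not spell out a proof but states that it proceeds via the variance identity $8S(\yy;t)=\Var(h_t(\yy+1))-2\Var(h_t(\yy))+\Var(h_t(\yy-1))$ together with the tail estimates of Appendix~\ref{sec:estimates}, following \cite{Baik-Ferrari-Peche14} verbatim. Your write-up simply makes explicit the two analytic inputs (moment convergence via uniform tails, and passage from the discrete second difference to $g_U''$ in the distributional sense), which is exactly the content the paper defers to \cite{Baik-Ferrari-Peche14}.
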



\vspace{0.3cm}
Another application is that one can obtain the height fluctuations for other stationary TASEP on a ring. Note that the uniform initial conditions with $N=0,1,\cdots, L$ form a complete basis for all stationary initial conditions. Hence we may apply Theorem~\ref{thm:limit_height_uniform} for other stationary initial conditions. One  example is the Bernoulli condition. Suppose initially each site of the ring is occupied independently with probability $p$, where $p$ is a constant satisfies $0<p<1$. 
Then we have the following result.
\begin{cor}
	\label{cor:height_fluctuations_Bernoulli}
	Suppose $p\in(0,1)$ is a fixed constant. We consider the TASEP on the ring of size $L$ with Bernoulli initial condition of parameter $p$. Suppose $w\in\realR$, $\tau>0$ and $x\in\realR$ are fixed constants. Denote
	\begin{equation}
	\begin{split}
	t_L		&=\frac{\tau}{\sqrt{p(1-p)}} L^{3/2},\\
	\yy_L 	&=(1-2p)t_L+2w(p(1-p))^{1/3} t_L^{2/3},
	\end{split}
	\end{equation}
	Then
	\begin{equation}
	\lim_{L\to\infty} \prob \left(\frac{h_{t_L}(\yy_L)-(1-2p)\yy_L-2p(1-p)t_L}{-2p^{2/3}(1-p)^{2/3}t_L^{1/3}}\le x\right) = \FB(\tau^{1/3}x;\tau,2w\tau^{2/3}).
	\end{equation}
	Here $\FB(x;\tau,\gamma)$ is a distribution function for arbitrary $\tau>0$ and $\gamma=2w\tau^{2/3}\in\realR$, given by
	\begin{equation}
	\label{eq:aux_2016_09_01_02}
	\FB(x;\tau,\gamma):=\frac{1}{2\sqrt{2\pi}\tau}\int_{\realR} e^{-\frac{(y-\gamma)^2}{8\tau^2}}\FU\left(x+\frac{\gamma^2-y^2}{4\tau};\tau,y\right)\dd y.
	\end{equation}
\end{cor}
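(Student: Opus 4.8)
\medskip
\noindent\emph{Proof sketch.} The plan is to realize the Bernoulli initial condition as a mixture of the uniform ones and to read off the Gaussian integral in \eqref{eq:aux_2016_09_01_02} from the Gaussian fluctuations of the particle number. Conditioned on having exactly $N$ particles, the product Bernoulli measure of parameter $p$ on $\{0,1\}^{\intZ_L}$ is the uniform measure on $N$-particle configurations, and the dynamics preserves the particle number; hence, with $E_L$ denoting the event inside the probability in the corollary,
\begin{equation}
\prob_{\mathrm{Bern},p}(E_L)=\sum_{N=0}^{L}\binom{L}{N}p^{N}(1-p)^{L-N}\,\prob_{\mathrm{unif},N}(E_L).
\end{equation}
First I would localize the sum: given $\varepsilon>0$, choose $M$ so that $\sum_{|N-pL|>M\sqrt{Lp(1-p)}}\binom{L}{N}p^{N}(1-p)^{L-N}<\varepsilon$ for all large $L$ (Chebyshev, or the CLT); since $\prob_{\mathrm{unif},N}(E_L)\le 1$ this tail contributes at most $\varepsilon$. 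For $N$ in the window $|N-pL|\le M\sqrt{Lp(1-p)}$ one has $\rho_L:=N/L=p+O(L^{-1/2})$, so these $N$ eventually lie in $[c_1L,c_2L]$ for a fixed pair $c_1<p<c_2$, and Theorem~\ref{thm:limit_height_uniform} applies.

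Next I would recast $\prob_{\mathrm{unif},N}(E_L)$ in the form supplied by Theorem~\ref{thm:limit_height_uniform}. Keeping $t_L$ and $\yy_L$ fixed, set $\tau_N:=\tau\sqrt{\rho_L(1-\rho_L)/(p(1-p))}$, so that $t_L=\tau_N(\rho_L(1-\rho_L))^{-1/2}L^{3/2}$, and define $\gamma_N$ by $\yy_L=(1-2\rho_L)t_L+\gamma_N L$; since $\yy_L=(1-2p)t_L+\gamma L$ with $\gamma=2w\tau^{2/3}$, this gives the exact identity $\gamma_N=\gamma+2\tau(N-pL)/(\sqrt{p(1-p)}\,L^{1/2})$, while $\tau_N=\tau+O(L^{-1/2})$ uniformly over the window. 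A direct computation then shows that $E_L$, whose normalization is written with $p$, equals the event in \eqref{eq:aux_2016_09_03_03} taken at density $\rho_L$ with a threshold $x'_N$ satisfying $x'_N=x+(\gamma^2-\gamma_N^2)/(4\tau^{4/3})+O(L^{-1/2})$ uniformly over the window; crucially the terms linear in $N-pL$ coming from the shifts $(1-2\rho_L)\yy_L$ and $2\rho_L(1-\rho_L)t_L$ cancel, leaving only the quadratic correction. Invoking Theorem~\ref{thm:limit_height_uniform} — in the form in which $\tau$ and $w$ are allowed to depend on $L$ with convergent limits and the convergence in \eqref{eq:aux_2016_09_03_03} is locally uniform in $(\tau,w)$ and in $x$, which is what the proof of that theorem yields — together with the continuity of $\FU$, one obtains
\begin{equation}
\prob_{\mathrm{unif},N}(E_L)=\FU\!\left(\tau^{1/3}x+\frac{\gamma^2-\gamma_N^2}{4\tau};\,\tau,\gamma_N\right)+o(1),
\end{equation}
with $o(1)$ uniform over $N$ in the window.

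The final step is to convert the windowed sum into a Gaussian integral. By the local central limit theorem $\binom{L}{N}p^{N}(1-p)^{L-N}=(2\pi Lp(1-p))^{-1/2}e^{-m^2/2}(1+o(1))$ with $m:=(N-pL)/\sqrt{Lp(1-p)}$, and consecutive values of $N$ correspond to $m$-increments $(Lp(1-p))^{-1/2}$; since $\gamma_N=\gamma+2\tau m$, the joint continuity of $\FU$ and dominated convergence (the integrand being bounded by $1$) give that the windowed sum converges to
\begin{equation}
\frac{1}{\sqrt{2\pi}}\int_{|m|\le M}e^{-m^2/2}\,\FU\!\left(\tau^{1/3}x+\frac{\gamma^2-(\gamma+2\tau m)^2}{4\tau};\,\tau,\gamma+2\tau m\right)\dd m.
\end{equation}
The substitution $y=\gamma+2\tau m$ turns this into the truncation to $|y-\gamma|\le 2\tau M$ of $\FB(\tau^{1/3}x;\tau,\gamma)$ as defined in \eqref{eq:aux_2016_09_01_02}. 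Letting $L\to\infty$ and then $M\to\infty$ (equivalently $\varepsilon\to0$) yields the claimed limit; the same computation, with $\frac{1}{2\sqrt{2\pi}\tau}\int_{\realR}e^{-(y-\gamma)^2/(8\tau^2)}\dd y=1$ and the monotonicity of $\FU$ in its first argument, shows that $\FB(x;\tau,\gamma)$ is indeed a distribution function.

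The main obstacle is not the change of variables, which is bookkeeping, but securing the version of Theorem~\ref{thm:limit_height_uniform} that is stable under slowly varying parameters $\tau_L\to\tau$, $w_L\to w$ with convergence uniform on compact sets; this should follow from uniform convergence of the Fredholm-determinant formulas underlying that theorem's proof, but it must be checked, and along with it one must control the $o(1)$ in \eqref{eq:aux_2016_09_03_03} and the reparametrization remainders uniformly over the whole window so that the local-CLT/Riemann-sum passage is legitimate.
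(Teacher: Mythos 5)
Your proof is correct and follows essentially the same route the paper takes: the paper's own argument is to decompose the Bernoulli law as a binomial mixture of uniform measures on fixed particle numbers, apply Theorem~\ref{thm:limit_height_uniform} conditionally, and invoke the CLT plus the substitution $y=\gamma+2\tau m$ to reach~\eqref{eq:aux_2016_09_01_02}, with the paper noting explicitly that "this argument can be made rigorous by a simple tail estimate on the number of particles and then by the dominated convergence theorem." Your write-up fills in precisely those details (the Chebyshev truncation, the local CLT/Riemann-sum passage, and the observation that one needs Theorem~\ref{thm:limit_height_uniform} with slowly varying $\tau_L,w_L$ — which the general parameter setting of Section~\ref{sec:parameter_setting}, conditions~\eqref{eq:parameters_t}--\eqref{eq:parameters_condition2}, is already designed to supply), so it is a faithful rigorization of the same idea rather than a different approach.
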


A formal proof is as \Cr{follows}. Assume there are $pL+y\sqrt{p(1-p)}L^{1/2}$ particles initially. By applying Theorem~\ref{thm:limit_height_uniform}, we obtain that
\begin{equation}
\prob\left(\left.\frac{h_{t_L}(\yy_L)-(1-2p)\yy_L-2p(1-p)t_L}{-2p^{2/3}(1-p)^{2/3}t_L^{1/3}}\le x\ \right|\  pL+y\sqrt{p(1-p)}L^{1/2} \mbox{ particles with uniform initial condition }\right)
\end{equation}
converges to
\begin{equation}
\FU\left(\tau^{1/3}x-\gamma y-\tau y^2;\tau,\gamma+2y\tau\right)
\end{equation}
as $L\to\infty$, where $\gamma=2w\tau^{2/3}$. Together with the central limit theorem, we obtain
\begin{equation}
\lim_{L\to\infty}\left(\frac{h_{t_L}(\yy_L)-(1-2p)\yy_L-2p(1-p)t_L}{-2p^{2/3}(1-p)^{2/3}t_L^{1/3}}\le x\right)=\frac{1}{\sqrt{2\pi}}\int_{\realR}e^{-y^2/2}\FU\left(\tau^{1/3}x-\gamma y-\tau y^2;\tau,\gamma+2y\tau\right)\dd y.
\end{equation}
By a simple change of variables we arrive at~\eqref{eq:aux_2016_09_01_02}. This argument can be made rigorous by a simple tail estimate on the number of particles and then by the dominated convergence theorem. Since the argument is standard, we omit the details.

\vspace{0.3cm}

Recall that $\FU(x;\tau,\gamma)$ is symmetric on $\gamma$. Hence by using the formula~\eqref{eq:aux_2016_09_01_02} we have $\FB(x;\tau,\gamma)=\FB(x;\tau,-\gamma)$. However, different from $\FU(x;\tau,\gamma)$, we do not expect $\FB(x;\tau,\gamma)=\FB(x;\tau,\gamma+1)$. It is because by definition $h_t(\yy+L)-h_t(\yy)-(1-2p)L=-2\sum_{j=\yy+1}^{\yy+L}(\eta_j(t)-p)\approx -2L^{1/2}\sqrt{p(1-p)}\chi$ where $\chi$ is a standard Gaussian random variable. Hence formally
\begin{equation}
\label{eq:aux_2016_09_18_01}
\frac{h_{t_L}(\yy_L+L)-(1-2p)(\yy_L+L)-2p(1-p)t_L}{-2p^{2/3}(1-p)^{2/3}t_L^{1/3}}\approx\frac{h_{t_L}(\yy_L)-(1-2p)\yy_L-2p(1-p)t_L}{-2p^{2/3}(1-p)^{2/3}t_L^{1/3}}+\frac\chi{\tau^{1/3}}.
\end{equation}
Here the two random variables on the right hand side of~\eqref{eq:aux_2016_09_18_01} are not necessarily independent. This relation still strongly indicates that $\FB(\tau^{1/3}x;\tau,\gamma+1)$ is not the same as $\FB(\tau^{1/3}x;\tau,\gamma)$.

\vspace{0.3cm}
The organization of this paper is as \Cb{follows}. In Section~\ref{sec:limiting_distribution} we give the explicit formula and some properties of $\FU(x;\tau,\gamma)$. The proof of Theorem~\ref{thm:limit_height_uniform} is given in Section~\ref{sec:formulas} and~\ref{sec:asymptotics}: The finite time distribution formula is provided in Section~\ref{sec:formulas} and then the asymptotics in Section~\ref{sec:asymptotics}. Finally in the appendix \ref{sec:estimates} we give some tail bounds related to the distribution function $\FU(x;\tau,\gamma)$.

\section*{Acknowledgement}
The author would like to thank Jinho Baik, Ivan Corwin and Peter Nejjar for useful discussions.






\section{Limiting distribution $\FU$}
\label{sec:limiting_distribution}

The limiting distribution $\FU(x;\tau,\gamma)$ is defined as following
\begin{equation}
\label{eq:def_FB}
\FU(x;\tau,\gamma) = -\oint\frac{\dd}{\dd x}\left(e^{xA_1(z)+\tau A_2(z) +2B(z)}\det\left(I -\Ks_{z;x}\right)\right)\frac{\dd z}{\sqrt{2\pi} \ii z^2}
\end{equation}
where the integral is \Cb{along} an arbitrary simple closed contour within the disk $|z|<1$ and with $0$ inside. The terms $A_i(z)$ are given by
\begin{equation}
\label{eq:def_constants_A}
A_1(z) = -\frac{1}{\sqrt{2\pi}} \polylog_{3/2}(z),
\qquad
A_2(z) = -\frac{1}{\sqrt{2\pi}} \polylog_{5/2}(z),
\end{equation}
and $B(z)$ is given by
\begin{equation}
\label{eq:def_constant_B}
B(z) = \frac{1}{4\pi} \int_0^z \frac{(\polylog_{1/2}(y))^2}{y} \dd y.
\end{equation}
Here $\polylog_s(z)$ is the polylogarithm function \Cr{defined as follows: When $|z|<1$, $\polylog_s(z):=\sum_{k=1}^\infty\frac{z^k}{k^s}$, and it has an analytic continuation}
\begin{equation}
\polylog_{s}(z)=\frac{z}{\Gamma(s)}\int_0^\infty\frac{x^{s-1}}{e^{x}-z}\dd x
\end{equation}
\Cr{for all $z\in\complexC\setminus\realR_{\ge 1}$.}

The operator $\Ks_{z;x}$ is defined on the set $\inodes_{z,\LL} = \{\xi: e^{-\xi^2/2}=z, \Re(\xi)<0\}$ with kernel
\begin{equation}
\label{eq:def_inf_kernel_step}
\Ks_{z;x}(\xi_1,\xi_2) = \Ks_{z;x}(\xi_1,\xi_2;\tau,\gamma)
= \sum_{\eta \in \inodes_{z,\LL}}
\frac{e^{\Phi_z(\xi_1;x,\tau)  +\Phi_z(\eta;x,\tau)		 
		+\frac{\gamma}{2}(\xi_1^2-\eta^2)}}
{\xi_1\eta(\xi_1+\eta)(\eta+\xi_2)},
\end{equation}
where
\begin{equation}
\label{eq:def_Phi}
\Phi_z(\xi;x,\tau) = -\frac13\tau\xi^3	 +x\xi 	
-\sqrt{\frac{2}{\pi}}\int_{-\infty}^{\xi} \polylog_{1/2}(e^{-\omega^2/2})\dd \omega, \qquad \xi\in \inodes_{z,\LL}.
\end{equation}

\vspace{0.3cm}

The terms $A_i(z), B(z)$ and $\Ks_{z;x}$ are defined in \cite{Baik-Liu16}. They appeared in the\Cb{ two-parameter family of} limiting distributions \Cb{$\FS(x;\tau,\gamma)$} of TASEP on a ring with step initial condition in \Cb{the} relaxation time scale\Cr{. More explicitly, $\FS(x;\tau,\gamma)$ has an integral formula which is similar to}~\eqref{eq:def_FB}
\begin{equation}
\label{eq:aux_2016_09_03_02}
\FS(x;\tau,\gamma)=\oint e^{xA_1(z)+\tau A_2(z) +2B(z)}\det\left(I -\Ks_{z;x}\right)\ddbar{z},
\end{equation}
see (4.10) of \cite{Baik-Liu16}.
 It is known that \Cb{the terms $A_i(z), B(z)$ and $\Ks_{z;x}$} are well defined and bounded uniformly on the choice of $z$ (but the bound may depend on the contour). 
Furthermore, the Fredholm determinant $\det\left(I -\Ks_{z;x}\right)$ is periodic and symmetric on $\gamma$, which implies $\FS(x;\tau,\gamma)=\FS(x;\tau,\gamma+1)$ and $\FS(x;\tau,\gamma)=\FS(x;\tau,-\gamma)$.

To ensure $\FU(x;\tau,\gamma)$ in~\eqref{eq:def_FB} is well defined, we still need to check that the derivative in the integrand exists and is uniformly bounded. The only non-trivial part is to check $\frac{\dd}{\dd x}\det\left(I-\Ks_{z;x}\right)$. This can be proved by directly using the super-exponential decaying property of the kernel. The argument is standard and we do not provide details. Alternately, our analysis in Section~\ref{sec:asymptotics_Delta_determinant} also implies \Cb{that}  $\frac{\dd}{\dd x}\det\left(I-\Ks_{z;x}\right)$ is a limit of a uniformly bounded sequence hence it is also uniformly bounded. See Lemma~\ref{lm:moments_convergent} and~\ref{lm:uniform_bound_difference_determinant}.



\vspace{0.3cm}

As we discussed in Remark \ref{rmk:01}, the limiting distribution when $\gamma=0$ was obtained in \cite{Prolhac16}. Numeric plots of our formula $\FU(x;\tau,0)$ match the limiting distribution obtained in \cite{Prolhac16} well, see Figure~\ref{fig:FU_BR} in this paper and Fig.5.b in \cite{Prolhac16}. However, a rigorous proof of the equivalence on $\FU(x;\tau,0)$ and their formula (see (10) of \cite{Prolhac16}) is still missing.

\begin{figure}
	\centering
	\begin{minipage}{.42\textwidth}
		\includegraphics[scale=0.4]{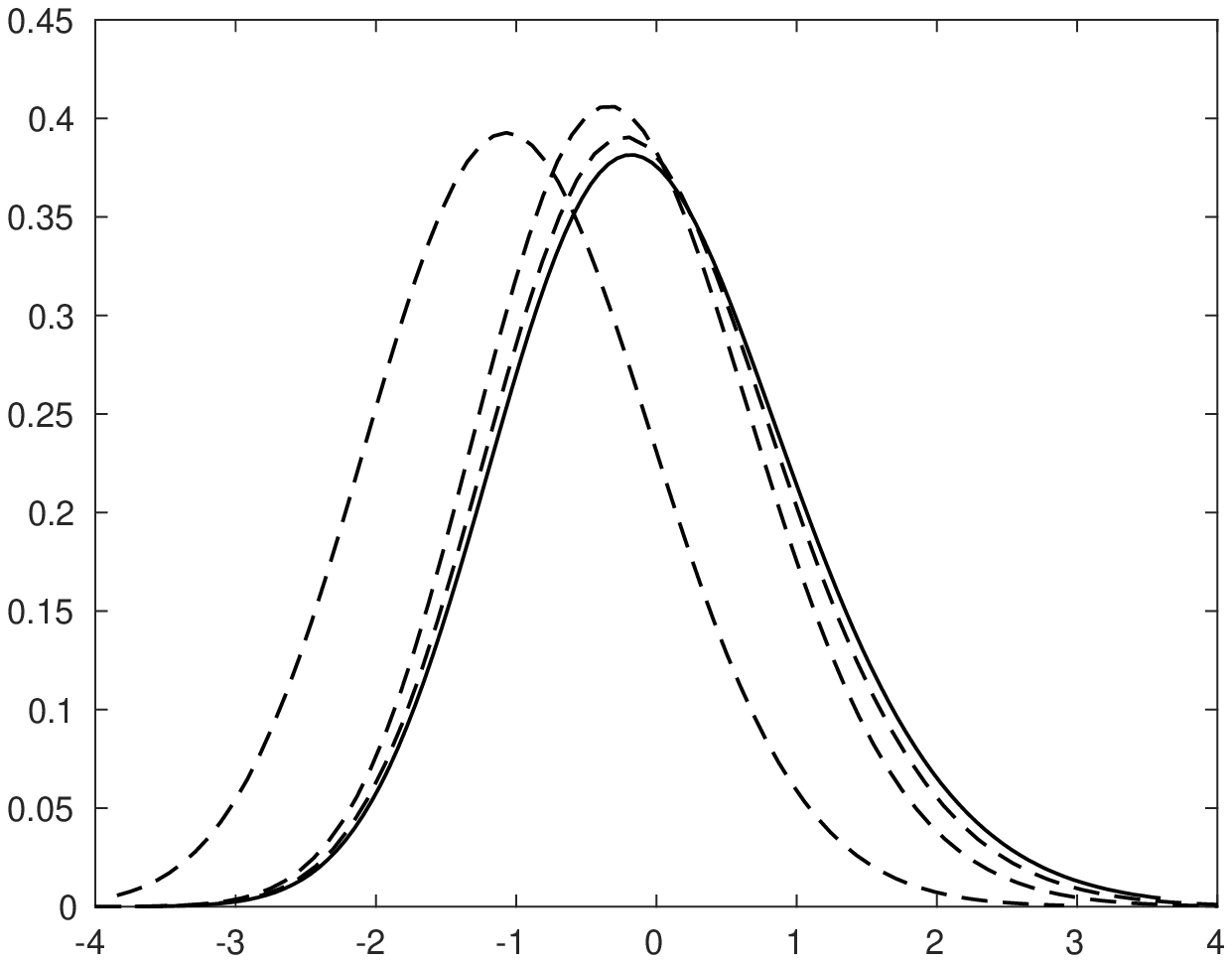}
		\caption{The three dashed lines are, from left to right, density functions of $\FU(\tau^{1/3}x;\tau,0)$ with $\tau=1$, $0.1$, and $0.02$ respectively. And the solid line is the density function of Baik-Rains distribution $F_0(x)$.}
		\label{fig:FU_BR}
	\end{minipage}
	\qquad
	\begin{minipage}{.42\textwidth}
		\includegraphics[scale=0.3]{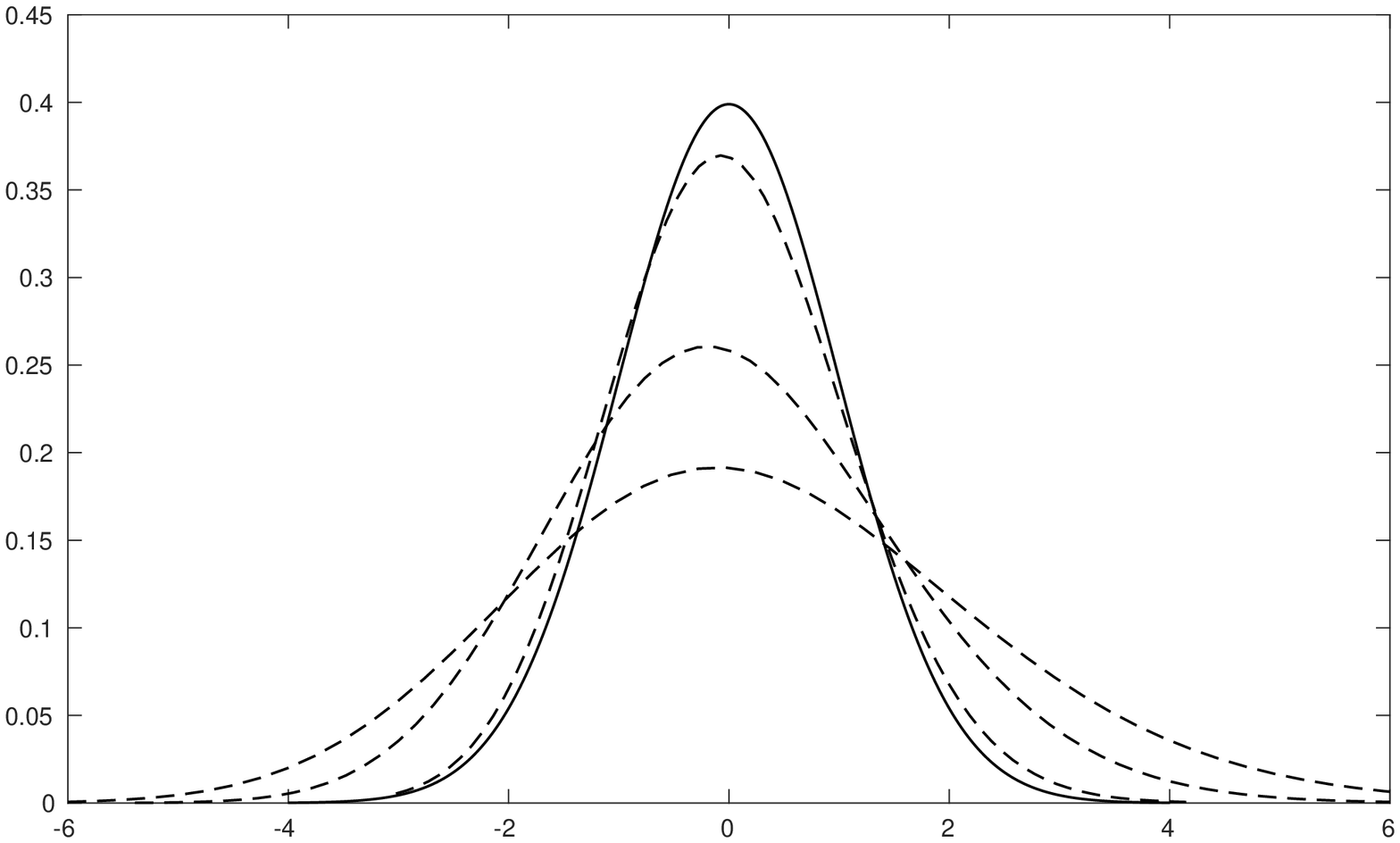}
		\caption{The three dashed lines are, from bottom to top\Cr{ (along $x=0$)}, density functions of $\FU\left(-\tau+\frac{\pi^{1/4}}{\sqrt{2}}x\tau^{1/2};\tau,0\right)$ with $\tau=0.02$, $0.1$, and $1$ respectively. And the solid line is the density function of the standard Gaussian distribution.}
		\label{fig:FU_Gaussian}
	\end{minipage}
\end{figure}

\vspace{0.3cm}
For any fixed $\tau>0$ and $\gamma\in\realR$, the function $\FU(x;\tau,\gamma)$ is a distribution function. The proof is not trivial and we \Cb{provide} it in the appendix~\ref{sec:estimates}. Similar to $\FS(x;\tau,\gamma)$, the function $\FU(x;\tau,\gamma)$ has the properties $\FU(x;\tau,\gamma+1)=\FU(x;\tau,\gamma)$ and $\FU(x;\tau,\gamma)=\FU(x;\tau,-\gamma)$. By using the following simple identity (see (11) of \cite{Derrida-Lebowitz98})
\begin{equation} 
\EE(h_{t_L}(\yy_L))=(1-2\rho_L)\yy_L+2\rho_L(1-\rho_L)t_L+\frac{2\rho_L(1-\rho_L)t_L}{L-1}
\end{equation}
which can also be checked directly from the definition, we have
\begin{equation}
\int_{\realR} x\dd \FU(x;\tau,\gamma)=-\tau.
\end{equation}
The rigorous proof of this identity is similar to Corollary~\ref{cor:space_time_covariance_uniform}. \Cb{Thus} we do not provide details here.
\vspace{0.2cm}

Besides, we expect the following small $\tau$ and large $\tau$ limits of $\FU(x;\tau,\gamma)$:
	\begin{enumerate}[(1)]
		\item For any fixed $x,w\in\realR$, we have (see Figure~\ref{fig:FU_BR} for an illustration)
		\begin{equation}
		\label{eq:aux_2016_09_03_05}
		\lim_{\tau\to0} \FU(\tau^{1/3}x;\tau,2w\tau^{2/3})=F_w(x).
		\end{equation}

		\item For any fixed $\gamma,x\in\realR$, we have (see Figure~\ref{fig:FU_Gaussian} for an illustration)
		\begin{equation}
		\lim_{\tau\to\infty}\FU\left(-\tau+\frac{\pi^{1/4}}{\sqrt{2}}x\tau^{1/2};\tau,\gamma\right)=\frac{1}{\sqrt{2\pi}}\int_{-\infty}^{x}e^{-y^2/2}\dd y.
		\end{equation}
	\end{enumerate}

\section{An exact formula of height distribution}
\label{sec:formulas}
In this section, we prove  an exact formula for the height function with uniform initial condition. This formula turns out to be suitable for later asymptotic analysis.

Before stating the results, we need to introduce some notations. Most of these notations are the same as in~\cite{Baik-Liu16}. Hence we just go through \Cb{them} quickly without further discussions. See Section 7 of \cite{Baik-Liu16} for more details.

We fix $L$ and $N$ in this section, and denote
\begin{equation}
\rho =\frac{N}{L}
\end{equation}
the density of the system.

For \Cb{each} $\zz\in\complexC$, define a polynomial
\begin{equation}
\label{eq:aux_2016_09_05_03}
q_{\zz}(w) = w^N(w+1)^{L-N} -\zz^L
\end{equation}
and its root set
\begin{equation}
\label{eq:aux_2016_10_05_01}
\roots_{\zz} =\{w: q_\zz(w)=0\}.
\end{equation}
When $\zz=0$, $\roots_{\zz}$ is a degenerated set of two points $0$ and $1$ with multiplicities $N$ and $L-N$ respectively. On the other hand, when $\zz\to\infty$, $\roots_{\zz}$ is \Cb{asymptotically} equal to a set of $L$  equidistant points on a circle $|w|=|\zz|$. For our purpose, we focus on the case when
\begin{equation}
\label{eq:r_bounds}
0<|\zz|<\rr_0:=\rho^\rho(1-\rho)^{1-\rho}.
\end{equation}
For such $\zz$, $\roots_\zz$ contains  $L-N$ points in the half plane $\{w: \Re(w) < -\rho\}$ and $N$ points in the second half plane $\{w:\Re(w)> -\rho\}$. We denote $\roots_{\zz,\LL}$ and $\roots_{\zz,\RR}$ the sets of these $L-N$ and $N$ points respectively. Then we define
\begin{equation}
q_{\zz,\LL}(w) = \prod_{u\in\roots_{\zz,\LL}}(w-u), \qquad q_{\zz,\RR}(w) = \prod_{v\in\roots_{\zz,\RR}} (w-v),
\end{equation}
which are two monic polynomials with root sets $\roots_{\zz,\LL}$ and $\roots_{\zz,\RR}$ respectively. These two functions satisfy the following equation
\begin{equation}
q_{\zz,\LL}(w) q_{\zz,\RR}(w) =q_\zz(w)
\end{equation}
for all $w\in\complexC$.

For $\zz\in\complexC$ satisfying~\eqref{eq:r_bounds} and arbitrary $k,\yy\in\intZ$, we define a kernel $K_{\zz;k,\yy}^{(2)}$ acting on $\ell^2(\roots_{\zz,\LL})$ as \Cb{follows}
\begin{equation}
\label{eq:def_kernel}
K_{\zz;k,\yy}^{(2)}(u,u') =f_2(u)\sum_{v\in\roots_{\zz,\RR}}\frac{1}{(u-v)(u'-v)f_2(v)},\qquad u,u'\in\roots_{\zz,\LL},
\end{equation}
where the function $f_2:\roots_\zz\to \complexC$ is defined by 
\begin{equation}
\label{eq:def_f}
f_2(w)= f_2(w;k,\yy):=\begin{dcases}
\frac{\left(q_{\zz,\RR}(w)\right)^2 w^{-2N-k+2} (w+1)^{-\yy+k+1}e^{tw}}{w+\rho},
&\quad w\in\roots_{\zz,\LL},\\
\frac{\left(q'_{\zz,\RR}(w)\right)^2 w^{-2N-k+2} (w+1)^{-\yy+k+1}e^{tw}}{w+\rho},
&\quad w\in\roots_{\zz,\RR}.
\end{dcases}
\end{equation}

We also define a function
\begin{equation}
\label{eq:def_consts}
\consts(\zz;k,\yy) = \frac{\prod_{u\in\roots_{\zz,\LL}} (-u)^{k+N-1}
	\prod_{v\in\roots_{\zz,\RR}} {(v+1)^{-\yy+L-N+k}} e^{tv}}
{\prod_{u\in\roots_{\zz,\LL}}\prod_{v\in\roots_{\zz,\RR}}(v-u)}.
\end{equation}
$K_{\zz;k,\yy}^{(2)}$ and $\consts(\zz;k,\yy)$ are the same as $K_\zz^{(2)}$ and $\consts(\zz)$ in \cite{Baik-Liu16} (with $\yy$ and $k$ replaced by $a$ and $k-N$ respectively) but we emphasize the parameters $k$ and $\yy$ for our purpose. 

Finally we denote $\Delta_k$ the difference operator
\begin{equation}
\label{eq:def_delta}
\Delta_k f(k)=f(k+1)-f(k)
\end{equation}
for arbitrary function $f:\intZ\to \complexC$. For an example, $\Delta_k\consts(\zz;k,\yy)=\consts(\zz;k+1,\yy)-\consts(\zz;k,\yy)$. 

\vspace{0.5cm}

Now we state the formula for the distribution function of $h_t(\yy)$.

\begin{thm}
\label{thm:current_fluctuation}
Suppose $\yy$ and $b$ are both integers satisfying $b\equiv \yy\pmod{2}$. For the $N$-particle TASEP on the ring of size $L$ with uniform initial condition, the distribution of the height function is given by
\begin{equation}
\label{eq:height_fluctuation}
\prob\left(h_t(\yy)\ge b\right) 
   = \frac{(-1)^{N+1}}{{L\choose N}}\oint \Delta_k\left( \consts(\zz; k,\yy+1)\cdot \det\left(I + K_{\zz;k,\yy+1}^{(2)}\right)\right) \frac{\dd \zz}{2\pi \ii \zz^{L+1}},
\end{equation}
where 
\begin{equation}
\label{eq:aux_2016_09_04_01}
k = 1-\frac{b-\yy}{2},
\end{equation}
and the integral is \Cb{along} an arbitrary simple closed contour which contains $0$ inside and lies in an annulus $0<|\zz|<\rr_0$.


\end{thm}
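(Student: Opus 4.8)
The plan is to follow the derivation of \cite{Baik-Liu16} for the step initial condition and to add one further summation, over the uniform initial configuration. The input is the Bethe‑ansatz finite‑time transition probability of the $N$‑particle TASEP on $\intZ_L$ used in Section~7 of \cite{Baik-Liu16}. First I would reduce the height event to a tagged‑particle event: writing $b=\yy+2-2k$ as forced by~\eqref{eq:aux_2016_09_04_01} (the parity hypothesis $b\equiv\yy\pmod 2$ making $k\in\intZ$), and using $h_t(\yy)=2J_0(t)+\yy-2\sum_{j=1}^{\yy}\eta_j(t)$ together with the periodic extension of the occupation variables, the event $\{h_t(\yy)\ge b\}$ is equivalent to a statement that at time $t$ at least a certain number of particles — indexed by $k$ — lie weakly to the right of site $\yy$, i.e. a tagged‑particle position event of exactly the kind handled in \cite{Baik-Liu16}. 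For a fixed deterministic initial configuration $\vec y=(y_1<\cdots<y_N)$ this expresses $\prob_{\vec y}(h_t(\yy)\ge b)$ as a sum of transition probabilities over the compatible final configurations.

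The sum over final configurations is then carried out exactly as in Section~7 of \cite{Baik-Liu16}: the Cauchy‑type summation identities and residue manipulations there apply essentially verbatim and collapse it to a contour integral over a small circle around $\zz=0$ of $\bigl(\text{prefactor}_{\vec y}\bigr)\cdot\det\bigl(I+K^{(2)}_\zz\bigr)$, where $K^{(2)}_\zz$ is built from the root set $\roots_\zz$ of~\eqref{eq:aux_2016_09_05_03}, split as $\roots_{\zz,\LL}\sqcup\roots_{\zz,\RR}$ according to~\eqref{eq:r_bounds}, and crucially the kernel does \emph{not} depend on $\vec y$ — the initial data enters only the prefactor, through a product over the Bethe roots of a monomial whose exponents are read off from $y_1,\dots,y_N$.

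It then remains to evaluate $\binom{L}{N}^{-1}\sum_{\vec y}$ of this integrand, the sum ranging over all $N$‑subsets of $\{0,\dots,L-1\}$. Because the $\vec y$‑dependence is, root by root, geometric, the nested sum $\sum_{0\le y_1<\cdots<y_N\le L-1}$ can be performed in closed form — each finite geometric progression over the $L$ sites contributes a factor $(1-r^L)/(1-r)$ with $r$ the root‑dependent ratio, which on a Bethe root of $q_\zz$ simplifies using $w^N(w+1)^{L-N}=\zz^L$ — and the outcome telescopes in the index $k$. This produces the difference operator $\Delta_k$, the combinatorial prefactor $(-1)^{N+1}\binom{L}{N}^{-1}$, and the measure $\frac{\dd\zz}{2\pi\ii\zz^{L+1}}$ of~\eqref{eq:height_fluctuation} in place of $\ddbar{\zz}$ for step initial data. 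Since the prefactor factors through the average while the Fredholm determinant does not change, the integrand becomes $\Delta_k\bigl(\consts(\zz;k,\yy+1)\det(I+K^{(2)}_{\zz;k,\yy+1})\bigr)$, with $\consts$ and $K^{(2)}$ identified with the step‑IC objects of \cite{Baik-Liu16} under the relabeling recorded just after~\eqref{eq:def_consts}. Finally, in the punctured annulus $0<|\zz|<\rr_0$ the root set $\roots_\zz$ — hence the splitting, $q_{\zz,\LL}$, $q_{\zz,\RR}$, $K^{(2)}_\zz$ and $\consts(\zz;\cdot,\cdot)$ — depends holomorphically on $\zz$, so the contour may be deformed to any simple closed curve around $0$ inside that annulus.

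I expect the main obstacle to be the averaging step: one must fix the Bethe‑root/particle bookkeeping precisely (which of the $L$ roots of $q_\zz$ is attached to which particle under the $S_N$‑symmetrization), verify the convergence — or the correct residue reading — of the geometric sums over initial positions, and confirm that after all the telescoping the surviving boundary terms reassemble exactly into $\Delta_k\bigl(\consts(\zz;k,\yy+1)\det(I+K^{(2)}_{\zz;k,\yy+1})\bigr)$ with the right indices $k$ and $\yy+1$. A subsidiary point worth spelling out is that the Fredholm determinant genuinely factors out of the initial‑configuration average rather than mixing into the kernel; this follows from the product structure of the $\vec y$‑dependent prefactor but should be checked term by term in the Fredholm expansion.
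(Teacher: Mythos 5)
There is a genuine gap in the averaging step, and it is not cosmetic: the order of operations you propose does not go through. You want to first apply the Section~7 machinery of \cite{Baik-Liu16} to a \emph{fixed} initial configuration $\vec y$ and obtain a contour integral of $\bigl(\text{prefactor}_{\vec y}\bigr)\cdot\det(I+K^{(2)}_\zz)$ with a $\vec y$-independent kernel, and then average the scalar prefactor over $\vec y$. But the Fredholm-determinant identity (Theorem~7.2 of \cite{Baik-Liu16}, which is Lemma~\ref{lm:Fredholm_representation} here) only applies when the Toeplitz-like $N\times N$ determinant has a $j$-independent exponent on $(w+1)$, as happens for step data. For a general $Y=(y_1,\dots,y_N)$ the one-point formula (Proposition~6.1 of \cite{Baik-Liu16}, displayed as~\eqref{eq:aux_2016_08_29_08} above) has $(w+1)^{y_j-j}$ sitting \emph{inside} the $(i,j)$ entry, under the sum over Bethe roots $w$; since different roots $w$ get different weights $(w+1)^{y_j-j}$, this dependence does not pull out as a scalar multiplicative prefactor, and there is no $\vec y$-independent Fredholm determinant to speak of at this stage. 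Your closing remark — that the factorization ``follows from the product structure of the $\vec y$-dependent prefactor'' — presupposes exactly the structure that fails.

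The workable route (and the one the paper takes) is the opposite order: average over $Y$ \emph{before} converting to a Fredholm determinant. One stays with the Toeplitz-like $N\times N$ determinant of~\eqref{eq:aux_2016_08_29_08}, observes that $y_j$ enters only the $j$th column, and performs the nested sum $\sum_{y_j=y_{j-1}+1}^{j-N}$ column by column. This telescopes inside the determinant (the leftover term of each column cancels against the adjacent column by skew-symmetry of $\det$), leaving a \emph{difference of two} Toeplitz-like determinants whose $(w+1)$ exponent is now $j$-independent — this is Lemma~\ref{lm:summation_initial_conf}. The boundary term coming from the $y_1$ sum, after using $w^N(w+1)^{L-N}=\zz^L$ and a column shift, is exactly the shift $k\to k+1$; that is where $\Delta_k$ comes from, not from a geometric sum of a scalar prefactor. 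Only then does one apply Lemma~\ref{lm:Fredholm_representation} to each of the two Toeplitz-like determinants to land on $\Delta_k\bigl(\consts(\zz;k,\yy+1)\det(I+K^{(2)}_{\zz;k,\yy+1})\bigr)$. The tagged-particle reduction, the contour choice, and the $(-1)^{N+1}\binom{L}{N}^{-1}$ and $\zz^{-L-1}$ bookkeeping in your sketch are all fine; it is specifically the claim that the Fredholm determinant factors out of the $Y$-average for fixed $\vec y$ that needs to be replaced by the column-by-column telescoping at the Toeplitz level.
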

\begin{proof}
We consider an equivalent model: the TASEP on $\conf_N(L)$. The configuration space $\conf_N(L)$ is defined by
\begin{equation}
\conf_N(L) =\{(x_1,x_2,\cdots,x_N)\in\intZ^N: x_1<x_2<\cdots<x_N<x_1+L\}.
\end{equation}
The equivalence between TASEP on $\conf_N(L)$ and TASEP with $N$ particles on the ring of size $L$ is as \Cb{follows}: The ring TASEP can be obtained by projecting the particles in TASEP on $\conf_N(L)$ to a ring of size $L$\Cb{; On} the other hand, in the TASEP on a ring if we define $x_k$ to be the number of steps the $k$-th particle moved plus its initial location, then $(x_1,\cdots,x_N)\in\conf_N(L)$ and we obtain the TASEP on $\conf_N(L)$.  See \cite{Baik-Liu16} for more discussions on TASEP on a ring and its equivalent models. 

It is \Cb{not difficult} to see that the uniform initial condition for the TASEP of $N$ particles on a ring of size $L$ corresponds to the uniform initial condition in the following set
\begin{equation}
\label{eq:conf_initial}
\mathcal{Y}_N(L)=\{(y_1,y_2,\cdots,y_N)\in\intZ^N: -L+1\le y_1<y_2<\cdots<y_N\le 0\}
\end{equation}
in the system of TASEP on $\conf_N(L)$. Moreover, for any $Y\in\mathcal{Y}_N(L)$, we have the following relation between two models
\footnote{We first consider the case when $1\le \yy \le L$. In this case, $h_t(\yy)=2J_\yy(t)+h_0(\yy)=2J_\yy(t)+\yy-2\sum_{j=1}^\yy\eta_j(0)$. Therefore $h_t(\yy) \ge b$ if and only if $J_\yy(t) -\sum_{j=1}^\yy \eta_j(0)\ge (b-\yy)/2$, which is further equivalent to $x_{k'}(t)\ge a$. The case when $\yy\ge L+1$ or $\yy\le 0$ follows immediately from the fact that $h_t(\yy)=h_t(\yy-L)+(L-2N)$.}
\begin{equation}
\label{eq:aux_2016_08_29_07}
\prob\left(h_t(\yy) \ge b \mbox{ in TASEP on the ring with initial configuration }
	Y\right)= \prob_Y\left(x_{k'}(t)\ge a \right)
\end{equation}
\Cr{where the notation $\prob_Y$ denotes the probability of TASEP on $\conf_N(L)$ with initial configuration $Y\in\mathcal{Y}_N(L)$, and $x_{k'}(t)$ denotes the location of the $k'$-th particle at time $t$. The relation}~\eqref{eq:aux_2016_08_29_07}\Cr{ interprets the distribution function of $h_t(\yy)$ (for TASEP on a ring) as that of particle location $x_{k'}(t)$ (for TASEP on $\conf_N(L)$) at time $t$. The parameters $\yy$, $b$ on the left hand side of}~\eqref{eq:aux_2016_08_29_07}\Cr{ could be arbitrary integers satisfying $b\equiv \yy\pmod{2}$, and $k'$, $a$ on the right hand side are determined by}
\begin{equation}
\label{eq:relation_a_b}
\begin{split}
k' &= N\left[\frac{b-\yy-2}{2N}\right]+N+1-\frac{b-\yy}{2},\\
a &= L\left[\frac{b-\yy-2}{2N}\right] +\yy+1.
\end{split}
\end{equation}
\Cr{Here the notation $[y]$ denotes the integer part of $y$, i.e., the largest integer that is less than or equal to $y$.}
\Cr{From the above formula}~\eqref{eq:relation_a_b} \Cr{it is easy to see that $1\le k'\le N$. Hence $x_{k'}(t)$ is well defined in TASEP on $\conf_N(L)$.}

Now we sum over all possible initial configurations $Y\in\mathcal{Y}_N(L)$, each of which has probability $\frac{1}{{L\choose N}}$. \Cb{We} obtain
\begin{equation}
\label{eq:aux_2016_08_29_09}
\prob(h_t(\yy) \ge b) 
=\frac{1}{{L\choose N}}\sum_{Y\in\mathcal{Y}_N(L)}\prob_Y(x_{k'}(t)\ge a).
\end{equation}

On the other hand, the one point distribution function for TASEP on $\conf_N(L)$ with arbitrary initial condition $Y\in\conf_N(L)$ was obtained in \cite{Baik-Liu16} (see Proposition 6.1). More explicitly, we have
\begin{equation}
\label{eq:aux_2016_08_29_08}
\begin{split}
& \prob_Y(x_{k'}(t)\ge a)\\
&=\frac{(-1)^{(k'-1)(N+1)}}{2\pi\ii}\oint
\det\left[ \frac{1}{L}\sum_{w\in\roots_{\zz}}\frac{w^{j-i-k'+1}(w+1)^{y_j-j-a+k'+1}e^{tw}}{w+\rho}\right]_{i,j=1}^N
 \frac{\dd \zz}{\zz^{1-(k'-1)L}},
 \end{split}
 \end{equation}
 where the integral is \Cb{along} any simple closed contour with $0$ inside.
To proceed, we need the following two lemmas.
\begin{lm}
\label{lm:summation_initial_conf}
Suppose $w_1,w_2,\cdots,w_N\in\roots_{\zz}$, then we have
\begin{equation}
\label{eq:summation_initial_conf}
\sum_{Y\in\mathcal{Y}_N(L)}\det\left[w_i^j(w_i+1)^{y_j-j}\right]_{i,j=1}^N 
= \det\left[ w_i^{j-1}(w_i+1)^{-N+1}\right]_{i,j=1}^N-(-1)^{N-1}\zz^{-L}\det\left[ w_i^{j}(w_i+1)^{-N}\right]_{i,j=1}^N.
\end{equation}
\end{lm}
\begin{lm}
\label{lm:Fredholm_representation}(Theorem 7.2 in \cite{Baik-Liu16}) Suppose $\zz$ is in the annulus $0<|\zz|<\rr_0$ as in~\eqref{eq:r_bounds}. For any integer $k$, we have the following identity\footnote{The identity in \cite{Baik-Liu16} includes an integral over $\zz$. However, the proof is still valid if we drop the integral in both sides.}
\begin{equation}
\label{eq:Fredholm_identity}
(-1)^{(k-1)(N+1)}\zz^{(k+N-1)L}\det\left[ \frac{1}{L}\sum_{w\in\roots_{\zz}}\frac{w^{j-i-k-N+1}(w+1)^{-\yy+k}e^{tw}}{w+\rho}\right]_{i,j=1}^N
=\consts(\zz;k,\yy+1)\cdot \det\left(I+K_{\zz;k,\yy+1}^{(2)}\right).
\end{equation}
\end{lm}

We first assume Lemma~\ref{lm:summation_initial_conf} is true. By inserting~\eqref{eq:aux_2016_08_29_08} to~\eqref{eq:aux_2016_08_29_09} and then applying Lemma~\ref{lm:summation_initial_conf}, we have
\begin{equation}
\prob\left(h_t(\yy)\ge b\right)=\frac{1}{{L\choose N}}\Delta_{k'}\frac{(-1)^{(k'-2)(N+1)}}{2\pi\ii}\oint\det\left[\frac{1}{L}\sum_{w\in\roots_\zz} \frac{w^{j-i-k'+1}(w+1)^{-N-a+k'+1}e^{tw}}{w+\rho}\right]_{i,j=1}^N\frac{\dd\zz}{\zz^{1-(k'-2)L}}.
\end{equation}
Note that by using~\eqref{eq:aux_2016_10_05_01} this expression is invariant under the following changes: $a\to a-L$ and $k'\to k'-N$, therefore we can replace $a$ by $a-L\left[\frac{b-y-2}{2N}\right]=\yy+1$ and $k'$ by $k'-N\left[\frac{b-y-2}{2N}\right]=k+N$. The above equation equals to
\begin{equation}
\label{eq:aux_2016_08_29_10}
\frac{1}{{L\choose N}}\Delta_k \frac{(-1)^{(k+N-2)(N+1)}}{2\pi \ii}\oint \det\left[\frac{1}{L}\sum_{w\in\roots_\zz} \frac{w^{j-i-k-N+1}(w+1)^{-\yy+k}e^{tw}}{w+\rho}\right]_{i,j=1}^N\frac{\dd\zz}{\zz^{1-(k+N-2)L}}.
\end{equation}
By restricting $\zz$ in the annulus $0<|z|<\rr_0$ and applying Lemma~\ref{lm:Fredholm_representation} we immediately obtain~\eqref{eq:height_fluctuation}.


\vspace{0.2cm}

It remains to prove Lemma~\ref{lm:summation_initial_conf}.

We take the sum over $Y\in\mathcal{Y}_N(L)$ in the following order: $y_N$, $y_{N-1}, \cdots, y_1$. \Cb{Obviously}, the summation over $Y\in\mathcal{Y}_N(L)$ is equivalent to that over $y_j$: $y_{j-1}+1\le y_j \le j-N$ recurrently for $j=N,\cdots, 2$ and finally $-L+1\le y_1\le 1-N$. Note $y_j$ only appears in the $j$-th column in the determinant on the left hand side of~\eqref{eq:summation_initial_conf}. Hence if we take the sum over all possible $y_j$, all other columns in the determinant do not change except the $j$-th one. Then for each $j=N,\cdots,2$, we have the following sum over $y_j$ on the $j$-th column
\begin{equation}
\sum_{y_j=y_{j-1}+1}^{j-N}w_i^j(w_i+1)^{y_j-j} = w_i^{j-1}(w_i+1)^{-N+1} - w_i^{j-1}(w_i+1)^{y_{j-1}-(j-1)},
\end{equation}
where the second term is the same as the $(i,j-1)$ entry thus the determinant does not change if we remove this term. After taking the sum over $y_N,\cdots,y_2$, we obtain a new determinant whose first column is the same as before, but the $j$-th column  is $w_i^{j-1}(w_i+1)^{-N+1}$ for all $j=N,\cdots,2$. Then we take the sum over $y_1$. Note that the bounds for $y_1$ are $-L+1$ and $1-N$. Therefore we have
\begin{equation}
\begin{split}
&\sum_{Y\in\mathcal{Y}}\det\left[w_i^j(w_i+1)^{y_j-j}\right]_{i,j=1}^N\\
=&\det\left[ w_i^{j-1}(w_i+1)^{-N+1}-\delta_1(j)w_i^{j-1}(w_i+1)^{-L}\right]_{i,j=1}^N\\
=&\det\left[ w_i^{j-1}(w_i+1)^{-N+1}\right]_{i,j=1}^N- \det\left[ w_i^{j-1}(w_i+1)^{-N+1-\delta_1(j)(L-N+1)}\right]_{i,j=1}^N,
\end{split}
\end{equation}
where \Cb{we used the linearity of the determinant on the first column in the second equation. The notation $\delta_1(j)$ denotes the delta function}. Comparing \Cb{the above equation} with~\eqref{eq:summation_initial_conf}, \Cb{we only need to} show
\begin{equation}
\det\left[ w_i^{j-1}(w_i+1)^{-N+1-\delta_1(j)(L-N+1)}\right]_{i,j=1}^N=(-1)^{N-1}\zz^{-L}\det\left[ w_i^{j}(w_i+1)^{-N}\right]_{i,j=1}^N.
\end{equation}By using the fact that $(w_i+1)^{L-N}w_i^N =\zz^L$ and then exchanging the columns, the above equation is further reduced to
\begin{equation}
\det\left[ w_i^{j}(w_i+1)^{-N+1-\delta_N(j)}\right]_{i,j=1}^N=\det\left[ w_i^{j}(w_i+1)^{-N}\right]_{i,j=1}^N,
\end{equation}
which follows from the simple identity 
\begin{equation}
\left[ w_i^{j}(w_i+1)^{-N+1-\delta_N(j)}\right]_{i,j=1}^N = \left[ w_i^{j}(w_i+1)^{-N}\right]_{i,j=1}^N \left[\delta_i(j)+\delta_i(j+1)\right]_{i,j=1}^N.
\end{equation}

\end{proof}

\section{Asymptotic analysis and proof of Theorem~\ref{thm:limit_height_uniform}}
\label{sec:asymptotics}

In this section, we focus on the asymptotics of the formula~\eqref{eq:height_fluctuation} and prove Theorem~\ref{thm:limit_height_uniform}. We will follow the framework in \cite{Baik-Liu16}, where they computed the asymptotics of two similar formulas, one of which contains exactly the same components $\consts(\zz;k,\yy)$, $K_{\zz;k,\yy}^{(2)}$ and $\det(I+K_{\zz;k,\yy}^{(2)})$ as in this paper. However, there are the following two differences:
\begin{enumerate}[(1)]
	\item\label{item:01} In \cite{Baik-Liu16}, the asymptotics of $\consts(\zz;k,\yy)$ and $\det(I+K_{\zz;k,\yy}^{(2)})$ was obtained with a special choice of parameters. More explicitly, the authors considered a case of discrete times $t$ and an order $O(L)$ parameter $k$. In this paper, we have a different setting of parameters, in which we let $t$ \Cr{go} to infinity continuously and $k$ \Cr{grow} as $O(t)$.
	\item\label{item:02} The formula~\eqref{eq:height_fluctuation} in this paper contains a new feature. Namely, we have \Cb{the} difference operator $\Delta_k$, which was not present in \cite{Baik-Liu16}. In the asymptotics, this $\Delta_k$, after appropriate scaling, converges to the differentiation with respect to $x$.
\end{enumerate} 


For \ref{item:01}, one can modify the calculations in \cite{Baik-Liu16} to the new parameters. However, in this paper we instead consider a more general setting \Cr{of the parameters and prove } that both $\consts(\zz;k,\yy)$ and $\det(I+K_{\zz;k,\yy}^{(2)})$ converge simultaneously \Cr{with this general setting}. \Cr{It turns out that all the choices of the parameters considered in} \cite{Baik-Liu16} \Cr{and Theorem}~\ref{thm:limit_height_uniform} \Cr{ in this paper are included in the general setting. See Section}~\ref{sec:parameter_setting}\Cr{ for details.} 

For \ref{item:02}, we need to find the asymptotics of $\Delta_k\consts(\zz;k,\yy)$ and $\Delta_k\det(I+K_{\zz;k,\yy}^{(2)})$. The first one can be obtained straightforwardly, while the second one requires a bound estimate (uniformly on $L$ and $\zz$) of each term in its expansion, which guarantees the convergence (uniformly on $\zz$) of $\Delta_k\det(I+K_{\zz;k,\yy}^{(2)})$.

\subsection{Setting of the parameters}
\label{sec:parameter_setting}
In this subsection, we list the following general setting of the parameters.

We suppose the density $\rho=\rho_L=N_L/L$ satisfies  $c_1<\rho_L<c_2$ for some fixed positive constants $c_1,c_2$. \Cr{We} assume
\begin{equation}
\label{eq:parameters_t}
t=t_L=\frac{\tau}{\sqrt{\rho_L(1-\rho_L)}} L^{3/2}+O(L),
\end{equation}
for some fixed constant $\tau>0$. Moreover, suppose  $\yy=\yy_L$ and $k=k_L$ are two integer sequences which are bounded uniformly by $O(L^{3/2})$ and satisfy
\begin{equation}
\label{eq:parameters_condition1}
\dist\left(\frac{\yy_L-(1-2\rho_L)t_L-\gamma L}{L},\, \intZ\right)=O(L^{-1/2}),
\end{equation}
and\footnote{Here we view $t_L,\ell_L$ and $k_L$ as parameters for convenience of our analysis. We can also view $t_L,\ell_L$ and $b=b_L$, the height of $h_t(\ell)$, as parameters. By using~\eqref{eq:aux_2016_09_04_01}, we find that~\eqref{eq:parameters_condition2} is equivalent to \begin{equation}
	\frac{b_L-(1-2\rho_L)\ell_L-2\rho_L(1-\rho_L)t_L}{-2\rho_L^{2/3}(1-\rho_L)^{2/3}t_L^{1/3}}=x+O(L^{-1/2}).
	\end{equation}}
\begin{equation}
\label{eq:parameters_condition2}
\frac{k_L+\rho_L(1-\rho_L)t_L-\rho_L\yy_L}{\rho_L^{2/3}(1-\rho_L)^{2/3}t_L^{1/3}}=x+O(L^{-1/2}),
\end{equation}
where $\gamma=2w\tau^{2/3}$ and $x$ are arbitrary fixed real constants, and the notation $\dist(u,\intZ)$ denotes the smallest distance between $u$ and all integers. 

\Cb{Recall that the asymptotics along the line $\yy_L=(1-2\rho_L)t_L+(\gamma+1) L$ is the same as that along $\yy_L=(1-2\rho_L)t_L+\gamma L$}. See~\eqref{eq:aux_2016_10_06_01} and its discussions. The condition~\eqref{eq:parameters_condition1} means that the points should be asymptotically on the $\yy_L=(1-2\rho_L)t_L+(\gamma+\intZ) L$ lines.

To understand the second condition~\eqref{eq:parameters_condition2}, we need to view $k_L$ (more precisely $k_L+N_L$) as the label of \Cb{the} particle which is at the given location $\yy_L$ at time $t_L$. First we extend the TASEP on a ring to a periodic TASEP on $\intZ$ by making infinitely many identical copies of the particles on each interval of length $L$. More precisely, we define $x_{k+N}(t)=x_{k}(t)+L$ for all $k$ and $t$. With this setting, the labels of particles are in $\intZ$ instead of $\{1,2,\cdots,N\}$. ~\eqref{eq:parameters_condition2} means the label of \Cb{the} particle located at \Cb{the} site $\yy_L$ at time $t_L$ is $\rho_L \yy_L-\rho_L(1-\rho_L)t_L$ at the leading order (more precisely $N+\rho_L \yy_L-\rho_L(1-\rho_L)t_L$ due to our choice of initial labeling: the label is asymptotically $N$ at site $0$ initially), plus an $O(t_L^{1/3})$ fluctuation term. The term $\rho_L\yy_L$ (assuming $\yy_L>0$, otherwise $-\yy_L$ instead) is asymptotically the number of particles  initially in the interval $[0,\yy_L]$, while $\rho_L(1-\rho_L)t_L$ is asymptotically the number of particles jumping through any given site during time $[0,t_L]$.

The above descriptions are in terms of stationary TASEP on a ring with uniform initial condition. However, recalling the discussions at the beginning of Section~\ref{sec:asymptotics}, the formula arising in step initial condition contains the same components  $\consts(\zz;k,\yy)$ and $\det(I+K_{\zz;k,\yy}^{(2)})$, whose asymptotics can be found within the same framework. Thus  the conditions~\eqref{eq:parameters_condition1} and~\eqref{eq:parameters_condition2} can also be interpreted similarly in terms of TASEP on a ring with step initial condition.

\vspace{0.3cm}
Now we consider three different choices of parameters satisfying~\eqref{eq:parameters_t},~\eqref{eq:parameters_condition1} and~\eqref{eq:parameters_condition2}. 

The first choice is \Cb{to} fix the label of particle $k_L$ and then let $\yy_L$ and $t_L$ go to infinity simultaneously. This choice corresponds to the case when an observer focuses on a tagged particle. Now we rewrite the conditions~\eqref{eq:parameters_condition1} and~\eqref{eq:parameters_condition2} as
\begin{equation}
\yy_L-(1-2\rho_L)t_L=\gamma L+jL+O(L^{1/2})
\end{equation}
and
\begin{equation}
\yy_L-(1-\rho_L)t_L=\rho^{-1}k_L-x{\rho_L^{-1/3}(1-\rho_L)^{2/3}}t_L^{1/3},
\end{equation}
where $j=j_L$ is an  integer sequence. These two equations imply that
\begin{equation}
t_L=\frac{L}{\rho_L}j+\frac{\gamma}{\rho_L}L-\frac{1}{\rho_L^2}k_L+O(L^{1/2}).
\end{equation}
Now we want $t_L$ growing as~\eqref{eq:parameters_t}. Hence $j$ grows as $\left[\tau \rho_L^{1/2}(1-\rho_L)^{-1/2}L^{1/2}\right]$. For simplification, we ignore the $O(L^{1/2})$ in $t_L$ and obtain
\begin{equation}
t_L=\frac{L}{\rho_L}\left[\frac{\tau\sqrt{\rho_L}}{\sqrt{1-\rho_L}}L^{1/2}\right]+\frac{\gamma}{\rho_L} L -\frac{1}{\rho_L^2}k_L,
\end{equation}
which is a time scaling of TASEP on a ring with step initial condition discussed in \cite{Baik-Liu16} (with their $k_L$ replaced by $k_L+N_L$). See Theorem 3.3 of \cite{Baik-Liu16}.

The second choice of parameters is \Cb{to} fix the location $\yy_L$ and let $k_L$ and $t_L$ \Cr{go} to infinity simultaneously. This choice corresponds to the case when an observer focuses on a fixed location. By \Cb{an argument similar to the previous case}, we find that $t_L$ can be expressed as
\begin{equation}
\label{eq:aux_2016_09_04_02}
t_L=\frac{L}{|1-2\rho_L|}\left[\frac{|1-2\rho_L|\tau}{\sqrt{\rho_L(1-\rho_L)}}L^{1/2}\right]-\frac{\gamma L}{1-2\rho_L}+\frac{\yy_L}{1-2\rho_L}
\end{equation}
when $\rho_L$ is of $O(1)$ distance to $1/2$, \Cr{and}
\begin{equation}
t_L=2\tau L^{3/2}
\end{equation}
\Cr{when $\rho_L=1/2$.} \Cr{Note that when $\rho_L=1/2$, the line $\yy_L=const$ which describes the observer's location in the space-time plane is also the characteristic line with a constant shift. Hence} this case is reduced to the next one, \Cr{which} we will discuss later. These scalings were discussed in \cite{Baik-Liu16}, see Theorem 3.4 of that paper.


The third choice of parameters is \Cb{to} fix the line $\yy_L-(1-2\rho_L)t_L=\gamma L$. This is exactly the choice we pick in Theorem~\ref{thm:limit_height_uniform}. It means that an observe moves along the direction of the characteristic line. In this case, the time parameter $t_L$ can grow continuously, and the location $\yy_L$ changes according to $\yy_L-(1-2\rho_L)t_L=\gamma L$. Finally the label of particle grows by the formula~\eqref{eq:parameters_condition2}. Note that in Theorem~\ref{thm:limit_height_uniform} we have the height $h_{t_L}(\yy_L)$ instead of the label of particles $k_L$, hence to check~\eqref{eq:parameters_condition2} one needs to use the relation $k_L=\frac{\yy_L-b_L}{2}+1$ in~\eqref{eq:aux_2016_09_04_01}. 


\vspace{0.3cm}
For \Cb{notational} convenience, we will \Cr{suppress} the subscript $L$ in the asymptotic analysis from the next subsection to the end of Section~\ref{sec:asymptotics}.

\subsection{Preliminaries: choice of integral contour and parameter-independent asymptotics}

In this subsection we follow the setting of \cite{Baik-Liu16} (see Section 8) and give the explicit choice of integral contour. We also give the limit of $\roots_{\zz,\LL}$ and $\roots_{\zz,\RR}$,  and asymptotics of some parameter-independent components in $\consts(\zz;k,\yy)$. These results are all included in \cite{Baik-Liu16}. \Cb{Hence} we do not provide details.

In~\eqref{eq:height_fluctuation}, we set
\begin{equation}
\label{eq:aux_2016_09_04_03}
\zz^L=(-1)^{N}\rr_0^Lz,
\end{equation}
where $z$ is along any given simple closed contour within the unit disk $|z|<1$ and with $0$ inside. Then~\eqref{eq:height_fluctuation} becomes
\begin{equation}
\label{eq:prob_formula}
\prob\left(h_{t}(\yy)\ge b\right)=\frac{(-1)^{N+1}}{{L\choose N}}\oint\zz^{-L} \Delta_{k}\left(\consts(\zz; k,\yy+1)\cdot \det\left(I + K_{\zz;k,\yy+1}^{(2)}\right)\right)\ddbar{z},
\end{equation}
here $\zz=\zz(z)$ is any \Cr{branch} determined by~\eqref{eq:aux_2016_09_04_03}. And it is easy to check the integrand above is invariant for $\zz\to\zz e^{2\pi\ii /L}$. \Cb{Therefore} the choice of $\zz$, provided it satisfies~\eqref{eq:aux_2016_09_04_03}, does not affect the integral.

We first consider the limits of the nodes sets $\roots_{\zz,\LL}$ and $\roots_{\zz,\RR}$ with $\zz$ scaled as~\eqref{eq:aux_2016_09_04_03}. It turns out that after rescaling these nodes sets converge to the sets $\inodes_{z,\LL}=\{\xi: e^{-\xi^2/2}=z,\Re\xi<0\}$ and $\inodes_{z,\RR}=\{\xi:e^{-\xi^2/2}=z,\Re\xi>0\}$ respectively. The explicit meaning of this convergence is described as below.

\begin{lm}
	\label{lm:limit_nodes}
	(Lemma 8.1 of \cite{Baik-Liu16}) Let $z$ be a fixed number satisfying $0<|z|<1$ and let $\epsilon$ be a real constant satisfying $0<\epsilon<1/2$. Set $\zz^L=(-1)^{N}\rr_0^Lz$ where $\rr_0=\rho^{\rho}(1-\rho)^{1-\rho}$. Define the map $\mathcal{M}_{L,\LL}$ from $\roots_{\zz,\LL}\cap\left\{w:|w+\rho|\le \rho\sqrt{1-\rho}N^{\epsilon/4-1/2}\right\}$ to $\inodes_{z,\LL}$ by
	\begin{equation}
	\mathcal{M}_{N,\LL}(w)=\xi, \quad\mbox{where }\xi\in\inodes_{z,\LL}\mbox{ and }\left|\xi-\frac{(w+\rho)N^{1/2}}{\rho\sqrt{1-\rho}}\right|\le N^{3\epsilon/4-1/2}\log N.	\end{equation}
	Then for large enough $N$ we have:
	\begin{enumerate}[(a)]
		\item $\mathcal{M}_{N,\LL}$ is well-defined.
		\item $\mathcal{M}_{N,\LL}$ is injective.
		\item The following relations hold:
		\begin{equation}
		\inodes_{z,\LL}^{(N^{\epsilon/4}-1)}\subseteq I(\mathcal{M}_{N,\LL})\subseteq \inodes_{z,\LL}^{(N^{\epsilon/4}+1)},
		\end{equation}
		where $I(\mathcal{M}_{N,\LL}):=\mathcal{M}_{N,\LL}\left(\roots_{\zz,\LL}\cap \{\zz:|\zz+\rho|\le \rho\sqrt{1-\rho}N^{\epsilon/4-1/2}\}\right)$, the image of the map $\mathcal{M}_{N,\LL}$, and $\inodes_{z,\LL}^{(c)}:=\inodes_{z,\LL}\cap\{\xi:|\xi|\le c\}$ for all $c>0$.
	\end{enumerate}
	If we define the mapping $\mathcal{M}_{N,\RR}$ in the same way but replace $\roots_{\zz,\LL}$ and $\inodes_{z,\LL}$ by $\roots_{\zz,\RR}$ and $\inodes_{z,\RR}$
	respectively, the same results hold for $\mathcal{M}_{N,\RR}$.
\end{lm}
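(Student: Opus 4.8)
The plan (following \cite{Baik-Liu16}) is to reduce the equation $q_\zz(w)=0$, near the point $w=-\rho$ around which its roots cluster when $|\zz|\approx\rr_0$, to an $o(1)$ analytic perturbation of the equation $e^{-\xi^2/2}=z$ defining $\inodes_z$, and then to pair up solutions on the two sides by Rouch\'e's theorem. First I would rewrite the root equation: on a fixed neighbourhood of $-\rho$ (slit along $(-\infty,-1]\cup[0,\infty)$) set $g(w)=\rho\log w+(1-\rho)\log(w+1)$ with the branch $\log w=\log|w|+\ii\pi$. Since $N=\rho L$ and $L-N=(1-\rho)L$ are integers, $e^{Lg(w)}=w^N(w+1)^{L-N}$ holds identically, so $q_\zz(w)=0\iff e^{Lg(w)}=\zz^L\iff Lg(w)\equiv L\log\zz\pmod{2\pi\ii}$. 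A direct computation shows $g$ has a simple saddle at $-\rho$: $g'(-\rho)=0$, $g''(-\rho)=-\tfrac1{\rho(1-\rho)}\ne0$, $g'''$ is bounded near $-\rho$ uniformly for $\rho\in(c_1,c_2)$, and $g(-\rho)=\log\rr_0+\ii\pi\rho$, so that $e^{Lg(-\rho)}=(-1)^N\rr_0^L=\zz^L/z$ under the scaling $\zz^L=(-1)^N\rr_0^L z$.

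Next I would make the saddle-point change of variables $w=-\rho+\frac{\rho\sqrt{1-\rho}}{\sqrt N}\xi$, under which $|w+\rho|\le\rho\sqrt{1-\rho}N^{\epsilon/4-1/2}$ corresponds to $|\xi|\le N^{\epsilon/4}$. Taylor-expanding $Lg$ about $-\rho$ and using $L\rho=N$, the linear term vanishes, the quadratic term is exactly $-\tfrac12\xi^2$, and
\begin{equation}
Lg(w)=Lg(-\rho)-\tfrac12\xi^2+E_N(\xi),\qquad E_N(\xi)=O\!\left(N^{-1/2}|\xi|^3\right),
\end{equation}
uniformly on $\rho\in(c_1,c_2)$ and, say, $|\xi|\le 2N^{\epsilon/4}$, since $L\cdot N^{-3/2}=\Theta(N^{-1/2})$. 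Absorbing $Lg(-\rho)$ into $L\log\zz$ via $L\log\zz\equiv Lg(-\rho)+\log z\pmod{2\pi\ii}$, the root equation becomes $e^{-\xi^2/2+E_N(\xi)}=z$. Hence a point $w$ with $|\xi(w)|\le N^{\epsilon/4}$ lies in $\roots_\zz$ iff $\xi(w)$ solves this equation, which is an analytic perturbation of $e^{-\xi^2/2}=z$ of size $O(N^{3\epsilon/4-1/2})=o(1)$ on the range $|\xi|\le N^{\epsilon/4}$.

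Finally I would carry out the matching. Because $z$ is fixed with $0<|z|<1$, the origin is not in $\inodes_z$, so there is $\delta(z)>0$ with $|\xi^\ast|\ge\delta(z)$ for all $\xi^\ast\in\inodes_z$; consequently $\xi\mapsto e^{-\xi^2/2}$ is locally biholomorphic near each such $\xi^\ast$ with $|(e^{-\xi^2/2})'|\gtrsim|\xi^\ast|$ there, and consecutive points of $\inodes_z$ with $|\xi^\ast|\le 2N^{\epsilon/4}$ are separated by $\gtrsim N^{-\epsilon/4}$. The crucial inequality is that $3\epsilon/4-1/2<-\epsilon/4$ exactly when $\epsilon<1/2$, so one may choose a radius $r_N$ with $N^{3\epsilon/4-1/2}\log N\ll r_N\ll N^{-\epsilon/4}$; Rouch\'e's theorem on the disk of radius $r_N$ about each $\xi^\ast\in\inodes_z$ (where $|E_N|$ is smaller than the modulus of the unperturbed equation) then shows this disk contains exactly one solution of $e^{-\xi^2/2+E_N}=z$, at distance $\le N^{3\epsilon/4-1/2}\log N$ of $\xi^\ast$, and conversely every solution $\xi(w)$ with $|\xi(w)|\le N^{\epsilon/4}$ has exactly one such partner. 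This correspondence is $\mathcal{M}_{N,\LL}$ once one restricts to $\Re(w+\rho)<0\iff\Re\xi<0$ (so $\roots_{\zz,\LL}\to\inodes_{z,\LL}$); the "exactly one" statements give well-definedness and injectivity, while the distance bound gives $I(\mathcal{M}_{N,\LL})\subseteq\inodes_{z,\LL}^{(N^{\epsilon/4}+1)}$ and, running the argument from each $\xi^\ast$ with $|\xi^\ast|\le N^{\epsilon/4}-1$, also $\inodes_{z,\LL}^{(N^{\epsilon/4}-1)}\subseteq I(\mathcal{M}_{N,\LL})$. The case of $\mathcal{M}_{N,\RR}$ is verbatim the same with $\Re\xi>0$.

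I expect the main obstacle to be the Rouch\'e bookkeeping in this last step — choosing $r_N$ and making the "exactly one solution" counts rigorous and uniform over the relevant window of nodes — since this is precisely where the hypothesis $\epsilon<1/2$ must be exploited to play the node spacing $N^{-\epsilon/4}$ against the perturbation size $N^{3\epsilon/4-1/2}$; a secondary technical point is keeping the Taylor remainder estimate for $Lg$ uniform in $\rho\in(c_1,c_2)$ over a ball that shrinks at rate $N^{\epsilon/4-1/2}$.
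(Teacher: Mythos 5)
The paper does not prove this lemma itself; it cites it to Lemma 8.1 of \cite{Baik-Liu16}, and your saddle-point localization around $w=-\rho$ (rewriting $q_\zz(w)=0$ as $e^{-\xi^2/2+E_N(\xi)}=z$ with $E_N=O(N^{-1/2}|\xi|^3)$) followed by Rouch\'e matching, with $\epsilon<1/2$ playing the perturbation size $O(N^{3\epsilon/4-1/2})$ against the node spacing $\gtrsim N^{-\epsilon/4}$, is precisely the approach used there and is correct. The one point worth making explicit is why the perturbed root cannot cross $\Re\xi=0$ (so that $\roots_{\zz,\LL}$ really maps into $\inodes_{z,\LL}$ rather than $\inodes_{z,\RR}$): writing $\xi^*=a+\ii b\in\inodes_z$, the relation $e^{-(\xi^*)^2/2}=z$ with $|z|<1$ forces $a^2-b^2=-2\log|z|>0$, hence $|\Re\xi^*|\ge|\xi^*|/\sqrt2\ge\delta(z)/\sqrt2$ uniformly, and the $o(1)$ Rouch\'e displacement preserves the sign of $\Re\xi$.
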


Then we consider the limits of $q_{\zz,\LL}(w)$, $q_{\zz,\RR}(w)$ and the following function
\begin{equation}
\label{eq:aux_2016_09_04_04}
\mathcal{C}_{N,1}^{(2)}(\zz):=\frac{\prod_{u\in\roots_{\zz,\LL}}(-u)^N\prod_{v\in\roots_{\zz,\RR}}(v+1)^{L-N}}{\prod_{u\in\roots_{\zz,\LL}}\prod_{v\in\roots_{\zz,\RR}}(v-u)}.
\end{equation}
The first two functions arise from the kernel $K_{\zz;k,y+1}^{(2)}$, and the third function $\mathcal{C}_{N,1}^{(2)}(\zz)$ is part of $\consts(\zz;k,\yy)$. The limits of these three functions were obtained in \cite{Baik-Liu16} as below.
\begin{lm}
	\label{lm:limits_ind_ftns}(Lemma 8.2 of \cite{Baik-Liu16}) Suppose $\zz,z$ and $\epsilon$ satisfy the conditions in Lemma~\ref{lm:limit_nodes}.
	\begin{enumerate}[(a)]
		\item For \Cr{a} complex number $\xi=\xi_N$ satisfying $c\le |\xi|\le N^{\epsilon/4}$ with some positive constant $c$, set $w_N=w_N(\xi)=-\rho+\rho\sqrt{1-\rho}\xi N^{-1/2}$. Then for sufficiently large $N$
		\begin{equation}
		q_{\zz,\LL}(w_N)=(w_N+1)^{L-N}e^{\hftn_\LL(\xi,z)}(1+O(N^{\epsilon-1/2}\log N))
		\end{equation}
		if $\Re\xi>c$, where 
		\begin{equation}
		\label{eq:def_hftn_LL}
		\hftn_{\LL}(\xi,z):=-\frac{1}{\sqrt{2\pi}}\int_{-\infty}^{-\xi}\polylog_{1/2}\left(ze^{(\xi^2-y^2)/2}\right)\dd y.
		\end{equation}
		Similarly for sufficiently large $N$
		\begin{equation}
		q_{\zz,\RR}(w_N)=(-w_N)^{N}e^{\hftn_\RR(\xi,z)}(1+O(N^{\epsilon-1/2}\log N))
		\end{equation}
		if $\Re\xi<-c$, where
		\begin{equation}
		\label{eq:def_hftn_RR}
		\hftn_\RR(\xi,z):=-\frac{1}{\sqrt{2\pi}}\int_{-\infty}^{\xi}\polylog_{1/2}\left(ze^{(\xi^2-y^2)/2}\right)\dd y.
		\end{equation}
	\item For large enough $N$ we have
	\begin{equation}
	\label{eq:aux_2016_09_05_01}
	\mathcal{C}_{N,1}^{(2)}(\zz)=e^{2B(z)}\left(1+O(N^{\epsilon-1/2})\right),
	\end{equation}
	where $B(z)=\frac{1}{4\pi} \int_0^z \frac{(\polylog_{1/2}(y))^2}{y} \dd y$ is defined in~\eqref{eq:def_constant_B}.
	\end{enumerate}
\end{lm}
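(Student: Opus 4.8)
The statement to prove is Lemma~\ref{lm:limits_ind_ftns}, which asserts the asymptotics of $q_{\zz,\LL}$, $q_{\zz,\RR}$ and $\mathcal{C}_{N,1}^{(2)}(\zz)$. Since the excerpt says this lemma is Lemma 8.2 of \cite{Baik-Liu16} and is quoted verbatim, the ``proof'' here is really a sketch of how it is established there; I will outline the argument one would reconstruct. The plan is to express each of the three quantities as a product over the node set $\roots_{\zz,\LL}$ (or $\roots_{\zz,\RR}$), take logarithms, and recognize the resulting sum as a Riemann sum for the integral defining $\hftn_\LL$, $\hftn_\RR$, or $B(z)$, with the error controlled by the local rate of convergence of the nodes established in Lemma~\ref{lm:limit_nodes}.

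For part (a), I would start from the factorization $q_\zz(w) = q_{\zz,\LL}(w)q_{\zz,\RR}(w)$ together with the defining relation $w^N(w+1)^{L-N} = \zz^L$ on $\roots_\zz$. Writing $q_{\zz,\RR}(w_N) = q_\zz(w_N)/q_{\zz,\LL}(w_N)$, one gets $\log q_{\zz,\RR}(w_N) = \log\big(w_N^N(w_N+1)^{L-N} - \zz^L\big) - \log q_{\zz,\LL}(w_N)$; the subtracted $\zz^L$ is exponentially small for $w_N$ near $-\rho$ with $|w_N+\rho|$ of order $N^{-1/2+\epsilon/4}$, because $|w_N^N(w_N+1)^{L-N}|$ is bounded away from $\rr_0^L$ there while $|\zz^L| = \rr_0^L|z|$ with $|z|<1$. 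So the main task is the asymptotics of $\log q_{\zz,\LL}(w_N) = \sum_{u\in\roots_{\zz,\LL}}\log(w_N - u)$. Using the change of variables $u = -\rho + \rho\sqrt{1-\rho}\,\eta N^{-1/2}$ and $w_N = -\rho + \rho\sqrt{1-\rho}\,\xi N^{-1/2}$, and the map $\mathcal{M}_{N,\LL}$ of Lemma~\ref{lm:limit_nodes} which identifies the rescaled nodes with $\inodes_{z,\LL}$ up to error $N^{3\epsilon/4-1/2}\log N$, the sum becomes $\sum_{\eta\in\inodes_{z,\LL}}\log(\xi - \eta) + (\text{constant in }\xi)$; one then evaluates this sum over the explicit lattice $\inodes_{z,\LL}$ (solutions of $e^{-\eta^2/2}=z$, $\Re\eta<0$, which are asymptotically equally spaced in $\eta^2$) by comparison with the integral, producing exactly $\hftn_\LL(\xi,z) = -\frac{1}{\sqrt{2\pi}}\int_{-\infty}^{-\xi}\polylog_{1/2}(ze^{(\xi^2-y^2)/2})\,\dd y$ after the $w$-independent prefactor $(w_N+1)^{L-N}$ is pulled out; the $O(N^{\epsilon-1/2}\log N)$ error is the accumulation of the per-node errors over the $O(N^{\epsilon/4})$ relevant nodes plus the tail beyond $|\eta| > N^{\epsilon/4}$, which is super-exponentially small. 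The $q_{\zz,\RR}$ statement is symmetric under $w \leftrightarrow$ reflection across $\Re = -\rho$, i.e. $\xi \to -\xi$, swapping the roles of $\LL$ and $\RR$.

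For part (b), I would write $\mathcal{C}_{N,1}^{(2)}(\zz) = \dfrac{\prod_{u\in\roots_{\zz,\LL}}(-u)^N \prod_{v\in\roots_{\zz,\RR}}(v+1)^{L-N}}{\prod_{u,v}(v-u)}$ and split the double product $\prod_{u,v}(v-u) = \prod_{u\in\roots_{\zz,\LL}} q_{\zz,\RR}(u) = \prod_{v\in\roots_{\zz,\RR}} (-1)^{L-N} q_{\zz,\LL}(v)$, then apply part (a) (in the regime where $u$, $v$ are in the bulk of order $N^{-1/2}$ away from $-\rho$, and handle the finitely many nodes near $-\rho$ and the super-exponentially small tails separately). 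The leading factors cancel against the $(-u)^N$ and $(v+1)^{L-N}$ numerators, leaving $\exp\big(-\sum_{v}\hftn_\LL + \text{lower order}\big)$ or equivalently a double Riemann sum that collapses to $\frac{1}{4\pi}\int_0^z\frac{(\polylog_{1/2}(y))^2}{y}\,\dd y = B(z)$ — the $(\polylog_{1/2})^2$ appearing because both the $\LL$ and $\RR$ node sets contribute one factor of $\polylog_{1/2}$ each. The $O(N^{\epsilon-1/2})$ error is again the bookkeeping of node-convergence errors.

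The main obstacle, and the step that requires genuine care rather than routine calculation, is the passage from the sum over the actual nodes $\roots_{\zz,\LL}$ to the integral: one must simultaneously (i) justify replacing each true node by its image under $\mathcal{M}_{N,\LL}$ with a uniform per-node error, (ii) control the contribution of nodes very close to $-\rho$ where the rescaling $w\mapsto\xi$ degenerates and $\log(w_N-u)$ can blow up, and (iii) bound the tail of nodes with $|\xi|$ large, showing it is negligible — all uniformly in $z$ on the chosen contour (the constants may depend on the contour, which is why the statement only claims a bound depending on the contour). Since Lemma~\ref{lm:limit_nodes} already packages (i) and the counting estimates, and the tail bound (iii) follows from the super-exponential decay $e^{-\eta^2/2}$, the remaining delicate point is (ii), the local analysis near the coalescence point $w=-\rho$; this is handled in \cite{Baik-Liu16} by a separate residue/contour argument isolating the $O(1)$ nearest nodes, and I would reproduce that here.
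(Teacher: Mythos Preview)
The paper does not prove this lemma; it is quoted verbatim from \cite{Baik-Liu16} (Lemma~8.2 there), with the explicit remark ``These results are all included in \cite{Baik-Liu16}. Hence we do not provide details.'' So there is no proof in the present paper to compare against directly. However, the analogous computations that the paper \emph{does} carry out (e.g.\ for $\mathcal{C}_{N,2}^{(2)}$ in Section~\ref{sec:asymptotics_C} and for Lemma~\ref{lm:estimate_diff_consts}) reveal that the method of \cite{Baik-Liu16} is \emph{not} the node-by-node approximation you sketch, but rather a residue/contour integral technique: one writes the sum over $\roots_{\zz,\LL}$ (or $\roots_{\zz,\RR}$) of a function $f$ as a contour integral of the form
\[
L\zz^L\int_{-\rho-\ii\infty}^{-\rho+\ii\infty} \big(f(w)-f(-\rho)\big)\,\frac{w+\rho}{w(w+1)q_\zz(w)}\,\ddbarr{w},
\]
using that the roots of $q_\zz$ are simple poles of $q_\zz'/q_\zz$, and then performs asymptotics on the integral along $\Re w=-\rho$ via the expansions~\eqref{eq:aux_2016_09_05_04}--\eqref{eq:aux_2016_09_05_05}. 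This packages all $L$ nodes at once and localizes the analysis to the critical window $|\xi|\le N^{\epsilon/4}$ automatically.

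Your approach is genuinely different, and the gap in it is the treatment of the ``far'' nodes. You write that the tail beyond $|\eta|>N^{\epsilon/4}$ is ``super-exponentially small'', but this is not so: $\roots_{\zz,\LL}$ contains $L-N=O(N)$ nodes, only $O(N^{\epsilon/2})$ of which lie in the window covered by Lemma~\ref{lm:limit_nodes}. The remaining $O(N)$ far nodes contribute the leading factor $(w_N+1)^{L-N}$, but pulling that factor out does not make their residual contribution negligible --- each far node contributes a factor $1+O(N^{-1/2})$ to the ratio $q_{\zz,\LL}(w_N)/(w_N+1)^{L-N}$, and there are $O(N)$ of them, so the product is not controlled without further input on the positions of those far nodes. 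Lemma~\ref{lm:limit_nodes} says nothing about them. Similarly, the limiting sum $\sum_{\eta\in\inodes_{z,\LL}}\log(\xi-\eta)$ you write down is divergent as stated (the terms grow like $\log|\eta|$), so the ``constant in $\xi$'' you subtract needs to be an infinite renormalization, which you do not specify. The contour-integral method of \cite{Baik-Liu16} sidesteps both issues: the integral along $\Re w=-\rho$ captures every node simultaneously, and the bound~\eqref{eq:aux_2016_09_05_06} shows the integrand is exponentially small for $|\xi|>N^{\epsilon/4}$, so no separate far-node analysis is needed.
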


Finally, we need the expansions of two functions $q_{\zz}(w)$ and $\frac{L(w+\rho)}{w(w+1)}$ along the line $\Re w=-\rho$. These estimates were obtained in \cite{Baik-Liu16}, see (9.36) and (9.37) of that paper. Below we give a quick summary of these estimates. Write $w=-\rho+\rho\sqrt{1-\rho}\xi N^{-1/2}$, where $\xi\in\ii\realR$.
It is straightforward to check that when $|\xi|\le N^{\epsilon/4}$
\begin{equation}
\label{eq:aux_2016_09_05_12}
N\log(1-\sqrt{1-\rho}\xi N^{-1/2})+(L-N)\log\left(1+\frac{\rho}{\sqrt{1-\rho}}\xi N^{-1/2}\right)=
-\frac{1}{2}\xi^2+\frac{2\rho-1}{3\sqrt{1-\rho}}\xi^3 N^{-1/2}+O(N^{\epsilon-1}),
\end{equation}
here and below $\log$ denotes the natural logarithm function with the branch cut $\realR_{\le 0}$.

Together with~\eqref{eq:aux_2016_09_05_03} and~\eqref{eq:aux_2016_09_04_03}, we have for $|\xi|\le N^{\epsilon/4}$
\begin{equation}
\label{eq:aux_2016_09_05_04}
\begin{split}
\frac{q_{\zz}(w)}{\zz^L}&=z^{-1}\left(1-\sqrt{1-\rho}\xi N^{-1/2}\right)^{N}\left(1+\frac{\rho}{\sqrt{1-\rho}}\xi N^{-1/2}\right)^{L-N}-1\\
&=
\frac{e^{-\xi^2/2}-z}{z}\left(1+\frac{2\rho-1}{3\sqrt{1-\rho}}\frac{e^{-\xi^2/2}}{e^{-\xi^2/2}-z}\xi^3 N^{-1/2}+O(N^{\epsilon-1})\right).
\end{split}
\end{equation}
When $|\xi|>N^{\epsilon/4}$, it is easy to check that
\begin{equation}
\label{eq:aux_2016_09_05_06}
\left|\frac{q_\zz(w)}{\zz^L}\right|\ge e^{cN^{\epsilon/2}}
\end{equation}
for some positive constant $c$.

Similarly, for $|\xi|\le N^{\epsilon/4}$, we have
\begin{equation}
\label{eq:aux_2016_09_05_05}
\frac{L(w+\rho)}{w(w+1)}=-\frac{1}{\rho\sqrt{1-\rho}}\xi N^{1/2}\left(1+\frac{1-2\rho}{\sqrt{1-\rho}}\xi N^{-1/2}+O(N^{-1})\right).
\end{equation}

\subsection{Asymptotics of $\consts(\zz;k,\yy)$}

\label{sec:asymptotics_C}

As we discussed before, the asymptotics of $\consts(\zz;k,\yy)$ was obtained in~\cite{Baik-Liu16} with a specific choice of parameters. The idea is as following: write $\consts(\zz;k,\yy)$ as $\mathcal{C}_{N,1}^{(2)}(\zz)\cdot\mathcal{C}_{N,2}^{(2)}(\zz;k,\yy)$, where $\mathcal{C}_{N,1}^{(2)}(\zz)$ is defined in~\eqref{eq:aux_2016_09_04_04}  and
\begin{equation}
\label{eq:aux_2016_09_04_05}
\mathcal{C}_{N,2}^{(2)}(\zz;k,\yy):=\prod_{u\in\roots_{\zz,\LL}}(-u)^{k-1}\prod_{v\in\roots_{\zz,\RR}}(v+1)^{-\yy+k}e^{tv}.
\end{equation}
With the parameter setting in \cite{Baik-Liu16}, they obtained (see Lemma 8.7 in \cite{Baik-Liu16})
\begin{equation}
\label{eq:aux_2016_09_05_02}
\lim_{N\to\infty}\mathcal{C}_{N,2}^{(2)}(\zz;k,\yy)= e^{\tau^{1/3}xA_1(z)+\tau A_2(z)}\left(1+O(N^{\epsilon-1/2})\right),
\end{equation}
where $A_1(z) = -\frac{1}{\sqrt{2\pi}} \polylog_{3/2}(z)$ and $A_2(z) = -\frac{1}{\sqrt{2\pi}} \polylog_{5/2}(z)$ are defined in~\eqref{eq:def_constants_A}.
Together with~\eqref{eq:aux_2016_09_05_01} in Lemma~\ref{lm:limits_ind_ftns}, one has
\begin{equation}
\label{eq:aux_2016_09_05_09}
\consts(\zz;k,\yy)=e^{\tau^{1/3}x A_1(z)+\tau A_2(z)+2B(z)}\left(1+O(N^{\epsilon-1/2})\right).
\end{equation}

The goal of this subsection is to check the proof of~\eqref{eq:aux_2016_09_05_02} in \cite{Baik-Liu16} also works under the more general setting~\eqref{eq:parameters_t},~\eqref{eq:parameters_condition1} and~\eqref{eq:parameters_condition2}. Considering that the asymptotic analysis in \cite{Baik-Liu16} was focusing on a different case which corresponds to the flat initial condition and~\eqref{eq:aux_2016_09_05_02} appearing in the step case was only discussed briefly, and that some parts of the proof will be used in later discussions, we would like to go through the main steps of the proof of~\eqref{eq:aux_2016_09_05_02} with the more general settings in this paper. However, we will not discuss many details \Cb{of} the calculations unless they are necessary.

First we write the summation in $\log\mathcal{C}_{N,2}^{(2)}(\zz;k,\yy)$ as an integral. By using a residue computation, it is easy to see that
\begin{equation}
\label{eq:sum_to_integral}
\begin{split}
&(k-1)\sum_{u\in\roots_{\zz,\LL}}\log(-u)+\sum_{v\in\roots_{\zz,\RR}}\left((-\yy+k)\log(v+1)+tv\right)\\
=&L\zz^L\int_{-\rho-\ii\infty}^{-\rho+\ii\infty}(\gee_2(w)-\gee_2(-\rho))\frac{w+\rho}{w(w+1)q_\zz(w)}\ddbarr{w},
\end{split}
\end{equation}
where
\begin{equation}
\label{eq:aux_2016_09_05_11}
\gee_2(w) = (k-1) \log (-w) +(\yy-k)\log (w+1) -tw.
\end{equation}

Now we \Cb{change} variables $w=-\rho+\rho\sqrt{1-\rho}\xi N^{-1/2}$ where $\xi\in\ii\realR$. Recall~\eqref{eq:aux_2016_09_05_06}, it is sufficient to consider the integral over $|\xi|\le N^{\epsilon/4}$ \Cb{since the integral for $|\xi|>N^{\epsilon/4}$} is exponentially small $O(e^{-cN^{\epsilon/2}})$. With this restriction and the assumptions that $\yy, k$ are bounded by $O(L^{3/2})$, we have
\begin{equation}
\label{eq:aux_2016_09_05_07}
\begin{split}
&\gee_2(w) -\gee_2(-\rho)\\
=&\frac{-k+\rho\yy-\rho(1-\rho)t}{\sqrt{1-\rho}N^{1/2}}\xi +\frac{(2\rho-1)k-\rho^2\yy}{2(1-\rho)N}\xi^2+\frac{-(1-3\rho+3\rho^2)k+\rho^3\yy}{3(1-\rho)^{3/2}N^{3/2}}\xi^3 +O(N^{\epsilon-1/2}).
\end{split}
\end{equation}
For \Cr{notational} simplification we \Cb{write} the first three terms $a_1\xi+a_2\xi^2+a_3\xi^3$. By using the conditions~\eqref{eq:parameters_t}-\eqref{eq:parameters_condition2}, it is \Cb{direct} to see that
\begin{equation}
\label{eq:aux_2016_09_05_08}
a_1=-\tau^{1/3}x+O(N^{-1/2}), \ a_2=O(N^{1/2}), \ a_3= O(1), \ -\frac{2(1-2\rho)a_2}{\sqrt{1-\rho}N^{1/2}}+3a_3=\frac{\rho(-k+\rho\yy)}{\sqrt{1-\rho}N^{3/2}}=\tau +O(N^{-1/2}).
\end{equation}

Now by plugging~\eqref{eq:aux_2016_09_05_07},~\eqref{eq:aux_2016_09_05_04}, and~\eqref{eq:aux_2016_09_05_05} we obtain that~\eqref{eq:sum_to_integral} equals to an exponentially small term $O(e^{-cN^{\epsilon/2}})$ plus
\begin{equation}
\begin{split}
-\int_{-\ii N^{\epsilon/4}}^{\ii N^{\epsilon/4}}\frac{z(a_1\xi^2+a_2\xi^3+a_3\xi^4)}{e^{-\xi^2/2}-z}\left(1-\frac{2\rho-1}{3\sqrt{1-\rho}}\frac{e^{-\xi^2/2}}{e^{-\xi^2/2}-z}\xi^3 N^{-1/2}\right)\left(1+\frac{1-2\rho}{\sqrt{1-\rho}}\xi N^{-1/2}\right)\ddbarr{\xi}+O(N^{\epsilon-1/2}).
\end{split}
\end{equation}
By using the symmetry of the integral domain and integrating by parts, \Cb{we find} that the above quantity equals to
\begin{equation}
\label{eq:aux_2016_09_05_10}
\begin{split}
&-a_1\int_{-\ii N^{\epsilon/4}}^{\ii N^{\epsilon/4}}\frac{z\xi^2}{e^{-\xi^2/2}-z}\ddbarr{\xi}-\left(-\frac{2(1-2\rho)a_2}{3\sqrt{1-\rho}N^{1/2}}+a_3\right)\int_{-\ii N^{\epsilon/4}}^{\ii N^{\epsilon/4}}\frac{z\xi^4}{e^{-\xi^2/2}-z}\ddbarr{\xi}+O(N^{\epsilon-1/2})\\
=&-a_1A_1(z)+\left(-\frac{2(1-2\rho)a_2}{\sqrt{1-\rho}N^{1/2}}+3a_3\right)A_2(z)+O(N^{\epsilon-1/2})
\end{split}
\end{equation}
where $A_1(z)=-\frac{1}{\sqrt{2\pi}}\polylog_{3/2}(z)=\int_{\Re\xi=0}\frac{z\xi^2}{e^{-\xi^2/2}-z}\ddbarr{\xi}$ and $A_2(z)=-\frac{1}{\sqrt{2\pi}}\polylog_{3/2}(z)=-\frac13\int_{\Re\xi=0}\frac{z\xi^4}{e^{-\xi^2/2}-z}\ddbarr{\xi}$ are defined in~\eqref{eq:def_constants_A}. Now we insert~\eqref{eq:aux_2016_09_05_08} into the above equation, we obtain \Cb{that} the right hand side equals to $\tau^{1/3}xA_1(z)+\tau A_2(z)+O(N^{\epsilon-1/2})$. Combing with~\eqref{eq:sum_to_integral}, we have~\eqref{eq:aux_2016_09_05_02}.

\subsection{Asymptotics of $\Delta_k\consts(\zz;k,\yy)$}

By definition, we have
\begin{equation}
\Delta_k\consts(\zz;k,\yy) = \consts(\zz;k,\yy)\left(\prod_{u\in\roots_{\zz,\LL}}(-u)\prod_{v\in\roots_{\zz,\RR}}(v+1)-1\right).
\end{equation}
By applying~\eqref{eq:aux_2016_09_05_09} and the following Lemma, we obtain
\begin{equation}
\label{eq:aux_2016_09_06_10}
\Delta_k\consts(\zz;k) = \frac{A_1(z)}{\sqrt{1-\rho}N^{1/2}}e^{\tau^{1/3}x A_1(z)+\tau A_2(z) +2B(z)}\left(1+O(N^{\epsilon-1/2})\right),
\end{equation}
where $\epsilon$ is the same as in the previous subsection.
\begin{lm}
\label{lm:estimate_diff_consts}
For any fixed $\epsilon$ satisfying $0<\epsilon<1/2$, we have
\begin{equation}
\label{eq:estimate_diff_consts}
\sum_{u\in\roots_{\zz,\LL}}\log (-u) +\sum_{v\in\roots_{\zz,\RR}}\log(v+1) = \frac{A_1(z)}{\sqrt{1-\rho}N^{1/2}}\left(1+O(N^{\epsilon-1/2})\right).
\end{equation}
\end{lm}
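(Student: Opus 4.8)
The plan is to reduce the left‑hand side $S:=\sum_{u\in\roots_{\zz,\LL}}\log(-u)+\sum_{v\in\roots_{\zz,\RR}}\log(v+1)$ to a single contour integral along the line $\Re w=-\rho$ and then repeat, almost verbatim, the saddle‑point computation of Section~\ref{sec:asymptotics_C}; indeed $S=\Delta_k\log\mathcal{C}_{N,2}^{(2)}(\zz;k,\yy)$, so this lemma is the $\Delta_k$‑analogue of~\eqref{eq:aux_2016_09_05_02}. For the reduction I would write $\sum_{u\in\roots_{\zz,\LL}}\log(-u)=\frac{1}{2\pi\ii}\oint\log(-w)\frac{q_\zz'(w)}{q_\zz(w)}\dd w$ over a bounded contour enclosing exactly $\roots_{\zz,\LL}$ (and the point $w=-1$), and similarly $\sum_{v\in\roots_{\zz,\RR}}\log(v+1)=\frac{1}{2\pi\ii}\oint\log(w+1)\frac{q_\zz'(w)}{q_\zz(w)}\dd w$ over a contour enclosing exactly $\roots_{\zz,\RR}$ (and $w=0$). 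Using $\frac{q_\zz'(w)}{q_\zz(w)}=\frac{L(w+\rho)}{w(w+1)}\bigl(1+\frac{\zz^L}{q_\zz(w)}\bigr)$ with $\frac{L(w+\rho)}{w(w+1)}=\frac{N}{w}+\frac{L-N}{w+1}$, the ``polynomial part'' contributes nothing: inside the left contour its only pole is at $w=-1$ and inside the right contour at $w=0$, and the residues there vanish because $\log(-w)$ vanishes at $w=-1$ and $\log(w+1)$ vanishes at $w=0$. The remaining part decays super‑exponentially at infinity, so each contour can be opened up onto the line $\Re w=-\rho$; collating the two (with opposite induced orientations) gives
\[
S=\frac{1}{2\pi\ii}\int_{-\rho-\ii\infty}^{-\rho+\ii\infty}\bigl(\log(-w)-\log(w+1)\bigr)\,\frac{L(w+\rho)\zz^L}{w(w+1)q_\zz(w)}\,\dd w .
\]

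Next I would subtract the constant $g(-\rho):=\log\rho-\log(1-\rho)$ from the bracket, which is legitimate since
\[
\int_{-\rho-\ii\infty}^{-\rho+\ii\infty}\frac{L(w+\rho)\zz^L}{w(w+1)q_\zz(w)}\,\ddbarr{w}=0 .
\]
This vanishing follows either from the partial‑fraction splitting $\frac{L(w+\rho)\zz^L}{w(w+1)q_\zz(w)}=\bigl(\sum_{v\in\roots_{\zz,\RR}}\frac{1}{w-v}-\frac{N}{w}\bigr)+\bigl(\sum_{u\in\roots_{\zz,\LL}}\frac{1}{w-u}-\frac{L-N}{w+1}\bigr)$, in which the first bracket is analytic for $\Re w<-\rho$ and $O(w^{-2})$ at infinity while the second is analytic for $\Re w>-\rho$ and $O(w^{-2})$ (close the contour to the left, resp.\ to the right), or directly from summing residues using $q_\zz'(u)=\frac{\zz^L L(u+\rho)}{u(u+1)}$ and $L\rho=N$.

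From here the argument is a copy of Section~\ref{sec:asymptotics_C}. Substituting $w=-\rho+\rho\sqrt{1-\rho}\,\xi N^{-1/2}$ with $\xi\in\ii\realR$, the tail $|\xi|>N^{\epsilon/4}$ contributes $O(e^{-cN^{\epsilon/2}})$ by~\eqref{eq:aux_2016_09_05_06}; on $|\xi|\le N^{\epsilon/4}$ one Taylor‑expands $\log(-w)-\log(w+1)-g(-\rho)=-\frac{\xi}{\sqrt{1-\rho}}N^{-1/2}+O(\xi^2N^{-1})$ and inserts~\eqref{eq:aux_2016_09_05_04}, \eqref{eq:aux_2016_09_05_05} together with $\dd w=\rho\sqrt{1-\rho}N^{-1/2}\dd\xi$. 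The leading term integrates to $\frac{N^{-1/2}}{\sqrt{1-\rho}}\int_{\Re\xi=0}\frac{z\xi^2}{e^{-\xi^2/2}-z}\ddbarr{\xi}=\frac{A_1(z)}{\sqrt{1-\rho}\,N^{1/2}}$ by the integral representation of $A_1(z)$ recorded in~\eqref{eq:aux_2016_09_05_10}, while the corrections coming from the subleading terms of the three expansions are each $O(N^{-1})$: the odd‑in‑$\xi$ monomials drop out by symmetry of the contour, and $\frac{z}{e^{-\xi^2/2}-z}$ decays super‑exponentially along $\ii\realR$ so all the $\xi$‑integrals converge. This yields the relative error $O(N^{\epsilon-1/2})$ claimed in~\eqref{eq:estimate_diff_consts}.

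The genuinely new and most delicate step is the contour reduction of the first two paragraphs: arranging the two branch cuts ($\realR_{\ge0}$ for $\log(-w)$ and $(-\infty,-1]$ for $\log(w+1)$) so that each is avoided by the corresponding contour, verifying the cancellation of the $\frac{N}{w}+\frac{L-N}{w+1}$ term through the vanishing of $\log(-w)$ at $-1$ and of $\log(w+1)$ at $0$, and correctly bookkeeping the opposite orientations when the two contours are merged onto the common line $\Re w=-\rho$. Once $S$ is in the form $\frac{1}{2\pi\ii}\int\bigl(\log(-w)-\log(w+1)-g(-\rho)\bigr)\frac{L(w+\rho)\zz^L}{w(w+1)q_\zz(w)}\,\dd w$, the remainder is a direct reuse of estimates already established in~\cite{Baik-Liu16} and in Section~\ref{sec:asymptotics_C}.
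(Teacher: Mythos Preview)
Your proposal is correct and follows essentially the same route as the paper. The paper's proof simply states that the sum can be written as the contour integral
\[
L\zz^L\int_{-\rho-\ii\infty}^{-\rho+\ii\infty}\bigl(\log(w/(-\rho))-\log((w+1)/(1-\rho))\bigr)\frac{w+\rho}{w(w+1)q_\zz(w)}\ddbarr{w}
\]
``by a residue computation similar to~\eqref{eq:sum_to_integral}'' and then says the rest ``is similar to~\eqref{eq:aux_2016_09_05_10} but much easier''; you have carried out exactly this residue computation in detail (via the $q_\zz'/q_\zz$ decomposition and contour opening) and then reproduced the saddle-point step, arriving at the same integral and the same leading term $\frac{A_1(z)}{\sqrt{1-\rho}N^{1/2}}$.
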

\begin{proof}
	By a residue computation similar to~\eqref{eq:sum_to_integral}, we write the left hand side of~\eqref{eq:estimate_diff_consts} as 
	\begin{equation}
	L\zz^L\int_{-\rho-\ii\infty}^{-\rho+\ii\infty}(\log(w/(-\rho))-\log((w+1)/(1-\rho)))\frac{w+\rho}{w(w+1)q_\zz(w)}\ddbarr{w}.
	\end{equation}
	The rest of the proof is similar to~\eqref{eq:aux_2016_09_05_10} but much easier. We omit the details.
\end{proof}

\subsection{Asymptotics of $\det(I+K_{\zz;k,\yy}^{(2)})$}
\label{sec:asymptotics_determinant}

Similar to $\consts(\zz;k,\yy)$, the asymptotics of  $\det(I+K_{\zz;k,\yy}^{(2)})$ was obtained in \cite{Baik-Liu16} with a special setting of parameters. The argument can \Cb{be applied} here for the general settings by a modification. Below we only provide the main steps and omit the details.

By using the property that $w^N(w+1)^{L-N}=\zz^L$ for arbitrary $w\in\roots_{\zz}$, we rewrite the determinant as $\det(I+ \tilde K_{\zz;k,\yy}^{(2)})$ with the kernel
\begin{equation}
\label{eq:aux_2016_09_05_15}
\tilde K_{\zz;k,\yy}^{(2)}(u_1,u_2) = h_2(u_1) \sum_{v\in\roots_{\zz,\RR}} \frac{1}{(u_1 -v)(u_2 -v) h_2(v)}
\end{equation}
where
\begin{equation}
h_2(w)= h_{2;k,\yy}(w)=\begin{dcases}
		\frac{g_2(w)}{w+\rho} \frac{q_{\zz,\RR}(w)^2}{w^{2N}},& w\in\roots_{\zz,\LL},\\
		\frac{g_2(w)}{w+\rho} \frac{q'_{\zz,\RR}(w)^2}{w^{2N}}, & w\in\roots_{\zz,\RR},
        \end{dcases}
\end{equation}
with 
\begin{equation}
\label{eq:def_g_2}
g_2(w) =g_{2;k,\yy}(w)=\frac{\tilde g_2(w)}{\tilde g_2(-\rho)} \frac{w^{jN}(w+1)^{j(L-N)}}{(-\rho)^{jN}(-\rho+1)^{j(L-N)}}
\end{equation}
and
\begin{equation}
\tilde g_2(w) = \tilde g_{2;k,\yy}(w) =w^{-k+2}(w+1)^{-\yy+k+1}e^{tw}.
\end{equation}
Here $j=j_L$ in~\eqref{eq:def_g_2} is an integer sequence satisfying
\begin{equation}
\yy-(1-2\rho)t-\gamma L=jL+O(L^{1/2}).
\end{equation}
The existence of such $j$ is guaranteed by~\eqref{eq:parameters_condition1}. Moreover, since we assume $t$ and $\yy$ are both at most $O(L^{3/2})$, we have $j\le O(L^{1/2})$.

Now we consider the asymptotics of $h_2(w)$. Write $w=-\rho+\rho\sqrt{1-\rho}\xi N^{-1/2}$. Then we have
\begin{equation}
g_2(w)=e^{-\gee_2(w)+\gee_2(-\rho)}\frac{w(w+1)}{-\rho(-\rho+1)}\left(1-\sqrt{1-\rho}\xi N^{-1/2}\right)^{jN}\left(1+\frac{\rho}{\sqrt{1-\rho}}\xi N^{-1/2}\right)^{j(L-N)}
\end{equation}
where $\gee_2$ is defined in~\eqref{eq:aux_2016_09_05_11}. If we further assume $|\xi|\le N^{\epsilon/4}$, the asymptotics of $g_2(w)$ can be obtained by using~\eqref{eq:aux_2016_09_05_07} and~\eqref{eq:aux_2016_09_05_12}
\begin{equation}
g_2(w)=e^{b_1\xi+b_2\xi^2+b_3\xi^3}(1+O(N^{\epsilon-1/2})),
\end{equation}
where
\begin{equation}
\label{eq:aux_2016_09_05_13}
\begin{split}
b_1&=-a_1=\tau^{1/3}x+O(N^{-1/2}),\\
b_2&=-a_2-\frac{1}{2}j=\frac{1}{2}\gamma+\frac{(1-2\rho)(-\rho\yy+k+\rho(1-\rho)t)}{2\rho(1-\rho)L}=\frac{1}{2}\gamma+O(N^{-1/2}),\\
b_3&=-a_3+\frac{2\rho-1}{3\sqrt{1-\rho}}jN^{-1/2}=\frac{(1-3\rho+3\rho^2)(k-\rho\yy)+(2\rho-1)^2\rho(1-\rho)t+O(L)}{3\rho^{3/2}(1-\rho)^{3/2}L^{3/2}}=-\frac{\tau}{3}+O(N^{-1/2}).
\end{split}
\end{equation}
Here in the second and third equations of~\eqref{eq:aux_2016_09_05_13} we used the conditions~\eqref{eq:parameters_condition2} and~\eqref{eq:parameters_t}. Thus we have
\begin{equation}
g_2(w)=e^{\tau^{1/3}x\xi+\frac{\gamma}{2}\xi^2-\frac{\tau}{3}\xi^3}(1+O(N^{\epsilon-1/2})),
\end{equation}

Together with Lemma~\ref{lm:limits_ind_ftns} (a), we immediately obtain the asymptotics of $h_2(w)$ when $|w+\rho|\le \rho\sqrt{1-\rho}N^{\epsilon/4}$. For the case when $|w+\rho|>\rho\sqrt{1-\rho}N^{\epsilon/4}$, one can show that $h_2(w)$ decays on $w\in\roots_{\zz,\LL}$ and grows on $w\in\roots_{\zz,\RR}$ exponentially fast as $w\to\infty$. The proof is similar to the case discussed in \cite{Baik-Liu16} and we do not provide details. The explicit asymptotics is described in the following lemma, which was proved for the special parameters in \cite{Baik-Liu16}.

\begin{lm}
\label{lm:estimate_kernel}
(Lemma 8.8 of \cite{Baik-Liu16})
Let $\epsilon$ be a fixed constant satisfying $0<\epsilon<1/2$.
\begin{enumerate}[(a)]
\item When $u\in\roots_{\zz,\LL}$ and $|u+\rho| \le \rho\sqrt{1-\rho} N^{\epsilon/4-1/2}$, we have
\begin{equation}
\label{eq:aux_2016_04_20_07}
h_2(u) = \frac{N^{1/2}}{\rho\sqrt{1-\rho}\xi} 
		  e^{2\hftn_\RR(\xi,z)-\frac{1}{3}\tau \xi^3 + \tau^{1/3}x\xi +\frac12\gamma\xi^2}
		  (1+O(N^{\epsilon-1/2}\log N)),
\end{equation}
where $\xi=\frac{N^{1/2}(u+\rho)}{\rho\sqrt{1-\rho}}$ and $\hftn_\RR$ is defined by~\eqref{eq:def_hftn_RR}, and the error term $O(N^{\epsilon-1/2}\log N)$ in~\eqref{eq:aux_2016_04_20_07} is independent of $u$ or $\xi$.

\item When $v\in\roots_{\zz,\RR}$ and $|v+\rho| \le \rho\sqrt{1-\rho} N^{\epsilon/4-1/2}$, we have
\begin{equation}
\label{eq:aux_2016_04_26_01}
\frac{1}{h_2(v)} = \frac{\rho^3(1-\rho)^{3/2}}{\zeta N^{3/2}} 
		  e^{2\hftn_\LL(\zeta,z)+\frac{1}{3}\tau \zeta^3 - \tau^{1/3}x\zeta -\frac12\gamma\zeta^2}
		  (1+O(N^{\epsilon-1/2}\log N)),
\end{equation}
where $\zeta=\frac{N^{1/2}(v+\rho)}{\rho\sqrt{1-\rho}}$ and $\hftn_\LL$ is defined by~\eqref{eq:def_hftn_LL}, and the error term $O(N^{\epsilon-1/2}\log N)$ in~\eqref{eq:aux_2016_04_26_01} is independent of $v$ or $\zeta$.

\item When $w\in\roots_{\zz}$ and $|w+\rho|\ge\rho\sqrt{1-\rho}N^{\epsilon/4-1/2}$, we have
 \begin{equation}
 h_2(w)=O(e^{-CN^{3\epsilon/4}}), \qquad w\in\roots_{\zz,\LL}
 \end{equation}
 or
  \begin{equation}
  \frac1{h_2(w)}=O(e^{-CN^{3\epsilon/4}}), \qquad w\in\roots_{\zz,\RR}.
  \end{equation}
Here both error terms $O(e^{-CN^{3\epsilon/4}})$ are independent of $w$.
\end{enumerate}
\end{lm}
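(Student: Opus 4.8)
The plan is to read off all three estimates from the factorization $h_2(w)=\frac{g_2(w)}{w+\rho}\cdot\frac{(q_{\zz,\RR}(w))^2}{w^{2N}}$ for $w\in\roots_{\zz,\LL}$ (and from the same expression with $q_{\zz,\RR}$ replaced by $q'_{\zz,\RR}$ for $w\in\roots_{\zz,\RR}$), feeding in the asymptotics of $g_2$ obtained just above together with Lemma~\ref{lm:limits_ind_ftns}(a). Parts (a) and (b) are then a short computation; part (c) is the tail estimate away from the critical point $w=-\rho$, for which I would invoke the corresponding argument of \cite{Baik-Liu16}, checking only that the relevant decay rates stay uniform under the general parameter setting of Section~\ref{sec:parameter_setting}.

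For part (a): on $\roots_{\zz,\LL}$ with $|\xi|\le N^{\epsilon/4}$ (equivalently $|w+\rho|\le\rho\sqrt{1-\rho}N^{\epsilon/4-1/2}$) we have already shown $g_2(w)=e^{\tau^{1/3}x\xi+\frac{\gamma}{2}\xi^2-\frac{\tau}{3}\xi^3}(1+O(N^{\epsilon-1/2}))$ with $\xi=\tfrac{N^{1/2}(w+\rho)}{\rho\sqrt{1-\rho}}$. Since the nodes of $\inodes_{z,\RR}$ have real part bounded away from $0$, Lemma~\ref{lm:limits_ind_ftns}(a) applies and gives $q_{\zz,\RR}(w)=(-w)^{N}e^{\hftn_\RR(\xi,z)}(1+O(N^{\epsilon-1/2}\log N))$, hence $(q_{\zz,\RR}(w))^2/w^{2N}=e^{2\hftn_\RR(\xi,z)}(1+O(N^{\epsilon-1/2}\log N))$ because $(-w)^{2N}=w^{2N}$. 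Combining this with $\frac{1}{w+\rho}=\frac{N^{1/2}}{\rho\sqrt{1-\rho}\,\xi}$ yields~\eqref{eq:aux_2016_04_20_07}.

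For part (b): the only new ingredient is an estimate for the derivative $q'_{\zz,\RR}(v)$ at a root $v\in\roots_{\zz,\RR}$. Differentiating $q_\zz(w)=w^N(w+1)^{L-N}-\zz^L$ gives $q'_\zz(w)=\frac{L(w+\rho)}{w(w+1)}\,w^N(w+1)^{L-N}$, and since $q_\zz=q_{\zz,\LL}q_{\zz,\RR}$ with $q_{\zz,\RR}(v)=0$ we obtain $q'_{\zz,\RR}(v)=q'_\zz(v)/q_{\zz,\LL}(v)=\frac{L(v+\rho)}{v(v+1)}\,\zz^L/q_{\zz,\LL}(v)$, using $v^N(v+1)^{L-N}=\zz^L$. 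Estimating $q_{\zz,\LL}(v)=(v+1)^{L-N}e^{\hftn_\LL(\zeta,z)}(1+O(N^{\epsilon-1/2}\log N))$ by Lemma~\ref{lm:limits_ind_ftns}(a), cancelling $\zz^{2L}$ against $v^{2N}(v+1)^{2(L-N)}$, and substituting the expansion~\eqref{eq:aux_2016_09_05_05} of $\frac{L(w+\rho)}{w(w+1)}$ gives $(q'_{\zz,\RR}(v))^2/v^{2N}=\frac{\zeta^2 N}{\rho^2(1-\rho)}e^{-2\hftn_\LL(\zeta,z)}(1+O(N^{\epsilon-1/2}\log N))$ with $\zeta=\tfrac{N^{1/2}(v+\rho)}{\rho\sqrt{1-\rho}}$. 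Dividing by $g_2(v)/(v+\rho)$ and using $v+\rho=\rho\sqrt{1-\rho}\,\zeta N^{-1/2}$ produces~\eqref{eq:aux_2016_04_26_01}.

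For part (c), where $|\xi|\ge N^{\epsilon/4}$: this is the analogue of Lemma 8.8(c) of \cite{Baik-Liu16}. After the normalization built into $g_2$, the decay of $h_2$ on $\roots_{\zz,\LL}$ (and of $1/h_2$ on $\roots_{\zz,\RR}$) is governed by the factor $e^{t(w+\rho)}$ together with the bounds on $q_{\zz,\RR}$ and on $\frac{L(w+\rho)}{w(w+1)}$; using that the left nodes satisfy $\Re(w+\rho)\le -cN^{\epsilon/4-1/2}$ (they lie in a cone around the negative real axis and, for $|\zz|<\rr_0$, cluster near $-1$) and symmetrically for the right nodes, one gets decay at rate $e^{-CN^{3\epsilon/4}}$ exactly as in \cite{Baik-Liu16}, and I would transcribe that argument. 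The main obstacle is precisely this uniformity claim: one must check that $C$ does not deteriorate when the special parameters of \cite{Baik-Liu16} are replaced by our general ones. This should follow because the new data enter only through the fixed real constants $\tau>0$, $x$, $\gamma$ inside the controlled cubic $\tau^{1/3}x\xi+\frac{\gamma}{2}\xi^2-\frac{\tau}{3}\xi^3$, and the additional integer $j=O(L^{1/2})$ is absorbed into $g_2$ with an error already bounded by~\eqref{eq:aux_2016_09_05_12}; hence the contour choice of Section~\ref{sec:asymptotics}, Lemma~\ref{lm:limit_nodes}, and the associated steepest-descent estimates carry over unchanged and remain uniform in the nodes and in $z$.
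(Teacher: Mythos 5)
Your proposal is essentially the paper's argument: the paper states that parts (a) and (b) follow ``immediately'' from the established asymptotics of $g_2(w)$ together with Lemma~\ref{lm:limits_ind_ftns}(a), and defers part (c) to \cite{Baik-Liu16} with the remark that the proof is similar. Your computations for (a) and (b) fill in exactly these steps correctly — in particular the identity $q'_{\zz,\RR}(v)=q'_\zz(v)/q_{\zz,\LL}(v)=\frac{L(v+\rho)}{v(v+1)}\zz^L/q_{\zz,\LL}(v)$ for $v\in\roots_{\zz,\RR}$, the cancellation $\zz^{2L}/v^{2N}=(v+1)^{2(L-N)}$, and the use of~\eqref{eq:aux_2016_09_05_05} are all right, and the prefactors and exponents match~\eqref{eq:aux_2016_04_20_07}--\eqref{eq:aux_2016_04_26_01}.

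One small inaccuracy in your sketch of (c): the heuristic that $\Re(w+\rho)\le -cN^{\epsilon/4-1/2}$ whenever $|w+\rho|\ge\rho\sqrt{1-\rho}N^{\epsilon/4-1/2}$ and $w\in\roots_{\zz,\LL}$ is not correct — near the pinch point the Bethe-root curve runs nearly vertically, so a node at distance $r$ from $-\rho$ can have $|\Re(w+\rho)|$ far smaller than $r$ — and the left nodes do not cluster near $-1$ for generic $|\zz|<\rr_0$; they spread along the curve $|w^N(w+1)^{L-N}|=|\zz|^L$. The exponential decay in (c) is not driven by $e^{t\Re(w+\rho)}$ alone but by the steepest-descent interplay of $\gee_2$ with the $q_{\zz,\RR}^2/w^{2N}$ (resp.\ $q'^{\,2}_{\zz,\RR}/w^{2N}$) factor, as in the cited argument. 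Since you ultimately transcribe the \cite{Baik-Liu16} estimate and only check uniformity in the new parameters (which is the correct and sufficient point, and the one the paper also leans on), this does not create a gap, but the stated mechanism should not be offered as the reason for the decay.
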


The Lemmas~\ref{lm:limit_nodes} and~\ref{lm:estimate_kernel} indicate the following result
\begin{equation}
\label{eq:estimate_determinant}
\lim_{n\to\infty}\det\left(I+K_{\zz;k,\yy}^{(2)}\right) = \det(I-\Ks_{z;\tau^{1/3}x}),
\end{equation}
where $\Ks_{z;x}$ is an operator on $\inodes_{z,\LL}$ as defined in~\eqref{eq:def_inf_kernel_step}\footnote{Note that $\hftn_\LL(\zeta,z)=\hftn_{\RR}(-\zeta,z)=-\sqrt{\frac{1}{2\pi}}\int_{-\infty}^{-\zeta} \polylog_{1/2}(e^{-\omega^2/2})\dd \omega$ for $\zeta\in\inodes_{z,\RR}$. }. A rigorous proof needs a uniform bound of the Fredholm determinant on the left hand side and an error control when we change the space from $\roots_{\zz}$ to $\inodes_z$, both of which \Cb{were} considered in~\cite{Baik-Liu16} for their choice of parameters. \Cb{Their argument also works for the general setting of parameters}. Therefore we omit the details.

\subsection{Asymptotics of $\Delta_k\det(I+K_{\zz;k}^{(2)})$}
\label{sec:asymptotics_Delta_determinant}

Similar to the previous subsection, we write $\Delta_k\det(I+K_{\zz;k,\yy}^{(2)})$ as $\Delta_k\det(I+\tilde K_{\zz;k,\yy}^{(2)})$. 

\Cb{We first} need the following two lemmas.
\begin{lm}
\label{lm:moments_convergent}
For any fixed positive integer $m$, we have
\begin{equation}
\label{eq:aux_2016_09_05_14}
\begin{split}
&\lim_{n\to\infty}\sqrt{1-\rho}N^{1/2}\sum_{u_1,\cdots,u_m\in\roots_{\zz,\LL}}\Delta_k\det\left[\tilde K_{\zz;k,\yy}^{(2)}(u_i,u_j)\right]_{i,j=1}^m \\
=& \sum_{\xi_1,\cdots,\xi_m\in\inodes_{z,\LL}}\left.\frac{\dd}{\dd y}\right|_{y=\tau^{1/3}x}\det\left[-\Ks_{z;y}(\xi_i,\xi_j)\right]_{i,j=1}^m.
\end{split}
\end{equation}
\end{lm}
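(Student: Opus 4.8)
\textbf{Proof strategy for Lemma~\ref{lm:moments_convergent}.}
The plan is to reduce the statement to a termwise interchange of limit and finite sum, and then to apply the already-established pointwise asymptotics of $\tilde K_{\zz;k,\yy}^{(2)}$ together with a dominated-convergence argument on the node sets $\roots_{\zz,\LL}$. First I would write out $\Delta_k\det[\tilde K^{(2)}_{\zz;k,\yy}(u_i,u_j)]_{i,j=1}^m$ explicitly. Observe that the only $k$-dependence of the kernel $\tilde K^{(2)}_{\zz;k,\yy}$ sits inside $g_{2;k,\yy}$, and, since $\tilde g_{2;k+1,\yy}(w)/\tilde g_{2;k,\yy}(w) = w^{-1}(w+1)$, the shift $k\to k+1$ multiplies $h_2(u_i)$ by $u_i^{-1}(u_i+1)$ and $1/h_2(v)$ by $v(v+1)^{-1}$. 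Hence the $k$-difference of the $m\times m$ determinant is itself a finite sum of terms each of which is a product of $m$ kernel-like entries times an explicit rational prefactor in the nodes, all of which is controlled pointwise by Lemma~\ref{lm:estimate_kernel} after the scaling $u_i = -\rho+\rho\sqrt{1-\rho}\xi_i N^{-1/2}$ (and analogously for the $v$'s appearing in the sum defining $\tilde K^{(2)}$). Because $g_2(w) = e^{\tau^{1/3}x\xi + \frac\gamma2\xi^2 - \frac\tau3\xi^3}(1+O(N^{\epsilon-1/2}))$ with $\partial_x$ acting cleanly inside the exponent, the scaled $k$-difference of a single entry converges, after multiplication by $\sqrt{1-\rho}N^{1/2}$, to $\frac{\dd}{\dd y}\big|_{y=\tau^{1/3}x}(-\Ks_{z;y})$ of that entry; expanding the determinant by multilinearity and using the product rule then yields $\frac{\dd}{\dd y}\det[-\Ks_{z;y}]$ on the right.

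The second ingredient is Lemma~\ref{lm:limit_nodes}, which lets me transport the sum over $\roots_{\zz,\LL}$ to a sum over $\inodes_{z,\LL}$: the image of the map $\mathcal M_{N,\LL}$ exhausts $\inodes_{z,\LL}$ up to a ball of radius $N^{\epsilon/4}\pm 1$, and nodes outside $|w+\rho|>\rho\sqrt{1-\rho}N^{\epsilon/4-1/2}$ contribute only $O(e^{-CN^{3\epsilon/4}})$ per factor by Lemma~\ref{lm:estimate_kernel}(c). I would therefore split each sum into a ``bulk'' part $|\xi_i|\le N^{\epsilon/4}$ and a ``tail'' part; the tail is super-exponentially small and drops out, while on the bulk the Riemann-type convergence of $\mathcal M_{N,\LL}$ together with the uniform (in $N$, in the nodes) error bounds of Lemma~\ref{lm:estimate_kernel} gives convergence of the bulk sum to the claimed series over $\inodes_{z,\LL}$. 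Here I also need the Fredholm series itself to converge, i.e.\ the sum over $\xi_1,\dots,\xi_m$ of the limiting determinant to be absolutely convergent uniformly in $m$; this follows from the super-exponential decay of $e^{2\hftn_\LL(\zeta,z)}$ (resp.\ $e^{2\hftn_\RR(\xi,z)}$) in the nodes, exactly as in \cite{Baik-Liu16}, so Hadamard's inequality bounds the $m$-th term and justifies the interchange.

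The main obstacle I expect is not the pointwise asymptotics but obtaining the bound \emph{uniformly in $N$ and in $z$} on the discrete side, so that dominated convergence applies to the node sums and, downstream, to the Fredholm expansion and the contour integral in $z$. Concretely, one must show that $\sqrt{1-\rho}N^{1/2}\,\Delta_k\tilde K^{(2)}_{\zz;k,\yy}(u,u')$ is dominated, uniformly in $N$, by a summable envelope on $\roots_{\zz,\LL}\times\roots_{\zz,\LL}$ — this requires that the extra rational prefactor $u^{-1}(u+1)-1 = O(N^{-1/2}|\xi|)$ introduced by the $k$-difference not spoil the decay, which it does not because it is a polynomially growing factor against a super-exponentially decaying kernel. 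I would handle this by combining the bulk estimate $h_2(u)= \frac{N^{1/2}}{\rho\sqrt{1-\rho}\,\xi}e^{2\hftn_\RR(\xi,z)+\cdots}(1+O(N^{\epsilon-1/2}\log N))$ with the crude tail bound of Lemma~\ref{lm:estimate_kernel}(c), noting both error terms there are stated to be independent of the nodes; the uniformity in $z$ (on a fixed contour inside the unit disk) is inherited from the corresponding uniformity already asserted for $\hftn_\LL,\hftn_\RR$ and for $B(z),A_i(z)$ in the excerpt. Once the dominating envelope is in hand, Lemma~\ref{lm:moments_convergent} follows by dominated convergence applied first to each fixed-$m$ node sum and the conclusion is as stated; the analogous control of the full Fredholm sum is what I would record separately in Lemma~\ref{lm:uniform_bound_difference_determinant}.
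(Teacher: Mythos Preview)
Your proposal is correct and follows the same strategy as the paper's proof: exploit the multiplicative $k$-shift on $h_{2;k}$, split the node sums into bulk ($|\xi|\le N^{\epsilon/4}$) and exponentially negligible tail via Lemma~\ref{lm:estimate_kernel}(c), apply the pointwise asymptotics of Lemma~\ref{lm:estimate_kernel}(a)(b), and transport the bulk sums to $\inodes_{z,\LL},\inodes_{z,\RR}$ through Lemma~\ref{lm:limit_nodes}. One cosmetic point: rather than invoking a product rule for $\Delta_k$ of the determinant (which for a discrete difference carries shifted cross-terms you would have to argue converge to the unshifted limit), the paper first expands the internal $v$-sums so that $\det[\tilde K]=\sum_{v_1,\dots,v_m}\det[H(u_i,u_j;v_i)]$ and then pulls out the single factor $\prod_{i=1}^m\frac{(u_i+1)v_i}{(v_i+1)u_i}-1$, whose scaled limit $\sum_i(\xi_i-\zeta_i)$ is read off directly and matched with $\frac{\dd}{\dd y}\det[-\Ks_{z;y}]$.
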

\begin{lm}
\label{lm:uniform_bound_difference_determinant}
There exists some constants $C$ and $C'$ which do not depend on $z$, such that for all positive integer $m$ we have
\begin{equation}
\label{eq:aux_2016_09_06_05}
N^{1/2}\sum_{u_1,\cdots,u_m\in\roots_{\zz,\LL}}\left|\Delta_k \det\left[\tilde K_{\zz;k,\yy}^{(2)}(u_i,u_j)\right]_{i,j=1}^m\right| \le 2mC^m
\end{equation}
for all $N\ge C'$.
\end{lm}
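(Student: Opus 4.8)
The plan is to prove Lemma~\ref{lm:uniform_bound_difference_determinant} by expanding the difference operator $\Delta_k$ over the determinant and then bounding each resulting term using the Hadamard inequality together with the explicit asymptotics of $h_2(w)$ established via Lemma~\ref{lm:estimate_kernel}. First I would note that $\Delta_k$ acts on $\tilde K_{\zz;k,\yy}^{(2)}$ only through the function $h_2=h_{2;k,\yy}$, and that passing from $k$ to $k+1$ multiplies $h_2(w)$ by $(-w)^{-1}$ on $\roots_{\zz,\LL}$ and by $(-w)^{-1}$ on $\roots_{\zz,\RR}$ as well (since $\tilde g_2(w)$ picks up a factor $w^{-1}(w+1)$ and the $j$-dependent part shifts), so that
\begin{equation}
\label{eq:plan_factor}
\tilde K_{\zz;k+1,\yy}^{(2)}(u_1,u_2) = h_2(u_1)(-u_1)^{-1}\sum_{v\in\roots_{\zz,\RR}}\frac{(-v)}{(u_1-v)(u_2-v)h_2(v)},
\end{equation}
up to the precise bookkeeping of the $(w+1)$ and $e^{tw}$ factors, which cancel appropriately. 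Writing $\tilde K_{\zz;k+1,\yy}^{(2)}-\tilde K_{\zz;k,\yy}^{(2)}$ as a sum of rank-controlled pieces, the determinant difference $\Delta_k\det[\tilde K_{\zz;k,\yy}^{(2)}(u_i,u_j)]_{i,j=1}^m$ becomes a sum of $m$ determinants each of which differs from the original in a single row (or column), by multilinearity of the determinant in its rows.

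Second, I would bound each such $m\times m$ determinant by Hadamard's inequality, controlling the $\ell^2$ norm of each row. The key inputs are the pointwise estimates of Lemma~\ref{lm:estimate_kernel}: for $u\in\roots_{\zz,\LL}$ with $|u+\rho|$ small, $h_2(u)$ is of size $N^{1/2}$ times a Gaussian-type factor $e^{2\hftn_\RR(\xi,z)-\frac13\tau\xi^3+\dots}$ in the rescaled variable $\xi$, and for $v\in\roots_{\zz,\RR}$, $1/h_2(v)$ is of size $N^{-3/2}$ times a super-exponentially decaying factor; outside the small window both are $O(e^{-CN^{3\epsilon/4}})$. Combined with the fact (Lemma~\ref{lm:limit_nodes}) that after rescaling the node sets $\roots_{\zz,\LL},\roots_{\zz,\RR}$ are within $O(1)$ of the lattices $\inodes_{z,\LL},\inodes_{z,\RR}$, which have spacing $\sim 1/|\xi|$, the sums over $v$ defining the kernel entries and the sums over $u_1,\dots,u_m$ converge and are geometrically bounded: each factor of $h_2(u_i)/(-u_i)$ introduces an extra convergent factor that the $N^{1/2}$ prefactor absorbs (this is precisely why the statement has $N^{1/2}$ rather than $N^{1/2}\cdot N^{1/2}$ — the $\Delta_k$ replaces one $N^{1/2}$ worth of growth in $h_2$ by the bounded quantity). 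The Gaussian/super-exponential decay in $\xi_i$ then yields a bound of the form $\prod_i (\text{convergent series in }\xi_i) \le C^m$ for the diagonal scaling provided by Hadamard, and the extra factor $2m$ in~\eqref{eq:aux_2016_09_06_05} comes from the $m$ choices of which row was modified plus the two sub-cases (modifying via the $h_2(u_1)$ factor versus the $1/h_2(v)$ factor inside the $v$-sum).

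The main obstacle will be making the Hadamard bound uniform in $m$ and in $z$ simultaneously. The naive Hadamard bound for an $m\times m$ determinant with entries of size $O(1)$ gives $m^{m/2}$, which is far too weak; one must exploit the rank-one-in-$v$ structure of $\tilde K_{\zz;k,\yy}^{(2)}$ — writing $\tilde K^{(2)}_{\zz;k,\yy}(u,u') = \sum_v \phi_v(u)\psi_v(u')$ with $\phi_v(u)=h_2(u)/(u-v)$ and $\psi_v(u')=1/((u'-v)h_2(v))$ — so that the determinant is itself an $m$-fold sum over $v_1,\dots,v_m\in\roots_{\zz,\RR}$ of products $\prod \phi_{v_i}(u_i)\psi_{v_{\sigma(i)}}(u_i)$, and the $\xi_i$-decay of $\phi_{v_i}(u_i)$ and the $\zeta_i$-decay of $1/h_2(v_i)$ give a genuinely geometric (not factorial) bound, as is standard for Airy-type kernels. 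I would carry this out by first establishing the single clean estimate $\sum_{v\in\roots_{\zz,\RR}}|\psi_v(u)| \le C$ and $\sum_{v}|1/h_2(v)|\cdot|v|\le CN^{-3/2}$ uniformly, then assembling the determinant bound via the Cauchy–Binet-type expansion, and finally tracking that the $z$-dependence enters only through the bounded quantities $B(z)$, $\hftn_{\LL/\RR}(\cdot,z)$ and the node locations, all of which are uniformly controlled for $z$ on the fixed contour. The continuity/differentiability claim needed for Lemma~\ref{lm:moments_convergent} is a byproduct of the same estimates via dominated convergence, but that is a separate lemma; here the sole deliverable is the uniform bound~\eqref{eq:aux_2016_09_06_05}, and the proof is essentially the term-by-term decay accounting just sketched.
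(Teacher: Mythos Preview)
Your proposal is close in spirit to the paper's proof --- the ingredients (expand the kernel in the $v$-variables, telescope the $\Delta_k$ contribution, use the pointwise decay of Lemma~\ref{lm:estimate_kernel}, and bound each determinant by Hadamard) are all there --- but the logical order is muddled in two places that would prevent the argument from closing.

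First, your claim that ``by multilinearity the determinant difference becomes a sum of $m$ determinants each differing in a single row'' does not hold at the level of the kernel $\tilde K_{\zz;k,\yy}^{(2)}$ itself: every entry of the matrix depends on $k$ through \emph{both} the factor $h_2(u_i)$ and the full $v$-sum $\sum_v 1/h_2(v)$, so $\tilde K_{\zz;k+1}^{(2)}-\tilde K_{\zz;k}^{(2)}$ is not a single-row perturbation. The paper resolves this by first expanding the determinant over $v_1,\dots,v_m\in\roots_{\zz,\RR}$ (your ``Cauchy--Binet'' step, which you mention but place too late), after which the $k$-dependence in each summand is the purely multiplicative factor $\prod_{i=1}^m\frac{(u_i+1)v_i}{(v_i+1)u_i}$. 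Telescoping $\prod_i r_i - 1$ by writing $r_i = \frac{-\rho(u_i+1)}{(1-\rho)u_i}\cdot\frac{(1-\rho)v_i}{-\rho(v_i+1)}$ and peeling off one factor at a time gives the $2m$ terms, each of which, after re-summing in $v$, is an $m\times m$ determinant with rows bounded entrywise by a single kernel $A(u,u')$ built from $h_2$ and a polynomial weight $E(w)$ that absorbs the extra $N^{1/2}$ and the factors $|v/(v+1)|$, $|u/(u+1)|$.

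Second, your worry that Hadamard is ``far too weak'' because it yields $m^{m/2}$ is misplaced. After the conjugation $h_2(u_i)\mapsto\sqrt{h_2(u_i)h_2(u_j)}$ (which you omit, but which is what makes the row norms comparable), Hadamard gives $\prod_i\|{\rm row}_i\|_2$, and summing this over $u_1,\dots,u_m\in\roots_{\zz,\LL}$ factorizes exactly into $\bigl(\sum_{u}\sqrt{\sum_{u'}|A(u,u')|^2}\bigr)^m$. The content of the lemma is then the single uniform bound $\sum_{u}\sqrt{\sum_{u'}|A(u,u')|^2}\le C$, which is precisely the decay accounting you describe via Lemma~\ref{lm:estimate_kernel} and Lemma~\ref{lm:limit_nodes}. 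No separate ``geometric vs.\ factorial'' argument is needed; Hadamard plus factorization of the $u$-sum is already geometric.
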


We assume both lemmas hold. By using  the dominated convergence theorem and the two lemmas above, we have
\begin{equation}
\lim_{N\to\infty}\sqrt{1-\rho}N^{1/2}\Delta_k\det(I+\tilde K_{\zz;k,\yy}^{(2)}) =\sum_{m\ge 1} \frac{1}{m!}\left.\frac{\dd}{\dd y}\right|_{y=\tau^{1/3}x}\sum_{\xi_1,\cdots,\xi_m\in\inodes_{z,\LL}}\det\left[-\Ks_{z;y}(\xi_i,\xi_j)\right]_{i,j=1}^m.
\end{equation}
Moreover, the right hand side is uniformly bounded. This further implies $\left.\frac{\dd }{\dd y}\right|_{y=\tau^{1/3}x} \det(I-\Ks_{z;y})$ is well defined and uniformly bounded. The above result \Cr{can thus} be written as
\begin{equation}
\label{eq:estimate_difference_determinant}
\lim_{n\to\infty}\sqrt{1-\rho}N^{1/2}\Delta_k\det(I+\tilde K_{\zz;k,\yy}^{(2)}) 
= \left.\frac{\dd }{\dd y}\right|_{y=\tau^{1/3}x} \det(I-\Ks_{z;y})
\end{equation}
uniformly on $z$.

\vspace{0.3cm}
Now we prove Lemmas~\ref{lm:moments_convergent} and~\ref{lm:uniform_bound_difference_determinant}.

\begin{proof}[Proof of Lemma~\ref{lm:moments_convergent}]
	Recall the definition of $\tilde K_{\zz;k,\yy}^{(2)}$ in~\eqref{eq:aux_2016_09_05_15}. It is easy to check that
	\begin{equation}
	\label{eq:aux_2016_09_06_06}
	\begin{split}
	&\Delta_k\det\left[\tilde K_{\zz;k,\yy}^{(2)}(u_i,u_j)\right]_{i,j=1}^m\\
	=&\sum_{v_1,\cdots,v_m\in\roots_{\zz,\RR}}\Delta_k\det\left[ \frac{h_{2;k,\yy}(u_i)}{(u_i-v_i)(u_j-v_i)h_{2;k,\yy}(v_i)}\right]_{i,j=1}^m\\
	=&\sum_{v_1,\cdots,v_m\in\roots_{\zz,\RR}}\left(\prod_{i=1}^m\frac{(u_i+1)v_i}{(v_i+1)u_i}-1\right)\det\left[ \frac{h_{2;k,\yy}(u_i)}{(u_i-v_i)(u_j-v_i)h_{2;k,\yy}(v_i)}\right]_{i,j=1}^m.
	\end{split}
	\end{equation}
	Here we emphasize the parameters in the function $h_2(w)$ to avoid confusion.
	Hence we have
	\begin{equation}
	\label{eq:aux_2016_09_05_16}
	\begin{split}
	&\sqrt{1-\rho}N^{1/2}\sum_{u_1,\cdots,u_m\in\roots_{\zz,\LL}}\Delta_k\det\left[\tilde K_{\zz;k,\yy}^{(2)}(u_i,u_j)\right]_{i,j=1}^m\\
	=&\sum_{\substack{u_1,\cdots,u_m\in\roots_{\zz,\LL}\\ v_1,\cdots,v_m\in\roots_{\zz,\RR}}}\sqrt{1-\rho}N^{1/2}\left(\prod_{i=1}^m\frac{(u_i+1)v_i}{(v_i+1)u_i}-1\right)\det\left[ \frac{h_{2;k,\yy}(u_i)}{(u_i-v_i)(u_j-v_i)h_{2;k,\yy}(v_i)}\right]_{i,j=1}^m.
	\end{split}
	\end{equation}
	Note that there are only $O(L^{2m})$ terms in the summation since $|\roots_{\zz}|=L$, and when $|u_i+\rho|\ge \rho\sqrt{1-\rho}N^{\epsilon/4}$ or $|v_i+\rho|\ge \rho\sqrt{1-\rho}N^{\epsilon/4}$ for some $i$ the summand is exponentially small (see Lemma~\ref{lm:estimate_kernel}). Therefore we can restrict the summation on all $u_i$ and $v_i$'s of at most $\rho\sqrt{1-\rho}N^{\epsilon/4}$ distance to $-\rho$. We write $u_i=-\rho+\rho\sqrt{1-\rho}\xi_i N^{-1/2}$ and $v_i=-\rho+\rho\sqrt{1-\rho}\zeta_i N^{-1/2}$, where $|\xi_i|, |\zeta_i|\le N^{\epsilon/4}$. Then by applying Lemma~\ref{lm:estimate_kernel} we have
	\begin{equation}
	\label{eq:aux_2016_09_05_17}
		\begin{split}
		&\eqref{eq:aux_2016_09_05_16}\\
		=&\sum_{\substack{\xi_1,\cdots,\xi_m\\ \zeta_1,\cdots,\zeta_m}}\left(\sum_{i=1}^m(\xi_i-\zeta_i)+O(N^{\epsilon-1/2})\right)\det\left[\frac{e^{\phi_\RR(\xi_i)-\phi_\LL(\zeta_i)}}{\xi_i\zeta_i(\xi_i-\zeta_i)(\xi_j-\zeta_i)}+O(N^{\epsilon-1/2}\log N)\right]_{i,j=1}^m\\
		&+O(e^{-cN^{\epsilon/2}}),
		\end{split}
	\end{equation}
	where the summation is over all possible $\xi_i$ and $\zeta_i$ such that $|\xi_i|, |\zeta_i|\le N^{\epsilon/4}$ and $-\rho+\rho\sqrt{1-\rho}\xi_i N^{-1/2}\in\roots_{\zz,\LL}$ and $-\rho+\rho\sqrt{1-\rho}\zeta_i N^{-1/2}\in\roots_{\zz,\RR}$ for all $i=1,\cdots,m$. And
	\begin{equation}
	\begin{split}
	\phi_\RR(\xi):=2\hftn_{\RR}(\xi,z)-\frac{1}{3}\tau\xi^3+\frac{1}{2}\gamma\xi^2+\tau^{1/3}x\xi,\\
	\phi_{\LL}(\zeta):=-2\hftn_{\LL}(\zeta,z)-\frac{1}{3}\tau\zeta^3+\frac{1}{2}\gamma\zeta^2+\tau^{1/3}x\zeta,
	\end{split}
	\end{equation}
	for $\xi$ and $\zeta$ satisfying $\Re\xi<0$ and $\Re\zeta>0$.
	Recall that the error terms in~\eqref{eq:aux_2016_09_05_17} are all uniformly on $\xi_i$ and $\eta_i$ (see Lemma~\ref{lm:estimate_kernel}), and note that there are at most $O(N^{\epsilon/2})$ elements by Lemma~\ref{lm:limit_nodes} part (c). Therefore~\eqref{eq:aux_2016_09_05_17} equals to
	\begin{equation}
	\label{eq:aux_2016_09_05_18}
	\sum_{\substack{\xi_1,\cdots,\xi_m\\ \zeta_1,\cdots,\zeta_m}}\sum_{i=1}^m(\xi_i-\zeta_i)\det\left[\frac{e^{\phi_\RR(\xi_i)-\phi_\LL(\zeta_i)}}{\xi_i\zeta_i(\xi_i-\zeta_i)(\xi_j-\zeta_i)}\right]_{i,j=1}^m +O(N^{(m+2)\epsilon-1/2}).
	\end{equation}
	Now by using Lemma~\ref{lm:limit_nodes} we know that these $\xi_i$ and $\zeta_i$'s are chosen from a perturbation of $I(\mathcal{M}_{N,\LL})$ and $I(\mathcal{M}_{N,\RR})$, the images of $\mathcal{M}_{N,\LL}$ and $\mathcal{M}_{N,\RR}$ respectively. The perturbation size is uniformly bounded by $N^{3\epsilon/4-1/2}\log N$. Similar to the reasoning from~\eqref{eq:aux_2016_09_05_17} to~\eqref{eq:aux_2016_09_05_18}, we can replace~\eqref{eq:aux_2016_09_05_18} by
		\begin{equation}
		\label{eq:aux_2016_09_05_19}
		\sum_{\substack{\xi_1,\cdots,\xi_m\in I(\mathcal{M}_{N,\LL})\\ \zeta_1,\cdots,\zeta_m\in I(\mathcal{M}_{N,\RR})}}\sum_{i=1}^m(\xi_i-\zeta_i)\det\left[\frac{e^{\phi_\RR(\xi_i)-\phi_\LL(\zeta_i)}}{\xi_i\zeta_i(\xi_i-\zeta_i)(\xi_j-\zeta_i)}\right]_{i,j=1}^m +O(N^{(m+2)\epsilon-1/2}).
		\end{equation}
		If we choose $\epsilon$ small \Cb{enough} such that $(m+2)\epsilon<1/2$, then the above quantity converges to
		\begin{equation}
		\label{eq:aux_2016_09_05_20}
		\sum_{\substack{\xi_1,\cdots,\xi_m\in \inodes_{z,\LL}\\ \zeta_1,\cdots,\zeta_m\in  \inodes_{z,\RR}}}\sum_{i=1}^m(\xi_i-\zeta_i)\det\left[\frac{e^{\phi_\RR(\xi_i)-\phi_\LL(\zeta_i)}}{\xi_i\zeta_i(\xi_i-\zeta_i)(\xi_j-\zeta_i)}\right]_{i,j=1}^m.
		\end{equation}
		Finally we check that~\eqref{eq:aux_2016_09_05_20} equals to the right hand side of~\eqref{eq:aux_2016_09_05_14}. This follows from the facts that  $\hftn_\RR(\xi,z)=-\frac1{\sqrt{2{\pi}}}\int_{-\infty}^{\xi} \polylog_{1/2}(e^{-\omega^2/2})\dd \omega$ for all $\xi\in\inodes_{z,\LL}$, and  $\hftn_{\LL}(\zeta,z)=-\frac1{\sqrt{2{\pi}}}\int_{-\infty}^{-\zeta} \polylog_{1/2}(e^{-\omega^2/2})\dd \omega$ for all $\zeta\in\inodes_{z,\RR}$, and that $\inodes_{z,\RR}=-\inodes_{z,\LL}$.
\end{proof}
\begin{proof}[Proof of Lemma~\ref{lm:uniform_bound_difference_determinant}]
	\Cb{We first prove the following Claim.}
	\begin{claim}
		\label{claim:01}
		There exist a positive constant $C$ and $C'$ uniformly on $z$ such that
		\begin{equation}
		\sum_{u_1\in\roots_{\zz,\LL}}\sqrt{\sum_{u_2\in\roots_{\zz,\LL}}\left|A(u_1,u_2)\right|^2}\le C
		\end{equation}
		for all $N\ge C'$,
		where
		\begin{equation}
		A(u_1,u_2):=\sqrt{|h_2(u_1)h_2(u_2)|}E(u_1)E(u_2)\sum_{v\in\roots_{\zz,\RR}}\frac{|E(v)|^2}{|u_1-v||u_2-v||h_2(v)|}
		\end{equation}
		and
		\begin{equation}
		E(w):=1+\frac{\rho|w+1|}{(1-\rho)|w|}+\frac{(1-\rho)|w|}{\rho|w+1|}+N^{1/2}\left(\left|\frac{(1-\rho)w}{\rho(w+1)}+1\right|+\left|\frac{\rho(w+1)}{(1-\rho)w}+1\right|\right).
		\end{equation}
	\end{claim}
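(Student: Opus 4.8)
The plan is to prove Claim~\ref{claim:01} in three moves: discard the nodes far from $-\rho$, rescale the rest and insert the asymptotics of Lemma~\ref{lm:estimate_kernel}, and then reduce the double sum to a product of convergent series over $\inodes_{z,\LL}$ and $\inodes_{z,\RR}$ by two applications of Cauchy--Schwarz. First I would record two uniform facts about $\roots_{\zz}$, valid for $\zz^L=(-1)^N\rr_0^Lz$ with $z$ on the fixed contour and $N$ large. Since $|w|^{N/L}|w+1|^{(L-N)/L}=|\zz|\to\rr_0\in(0,1)$ for every $w\in\roots_{\zz}$, one gets $c\le|w|,|w+1|\le 2$ with $c>0$ depending only on $c_1,c_2$; hence $E(w)\le CN^{1/2}$ for all $w\in\roots_{\zz}$. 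Also $\roots_{\zz,\LL}$ and $\roots_{\zz,\RR}$ are separated in the real direction by at least $cN^{-1/2}$ (this is built into the contour choice in \cite{Baik-Liu16}, and is consistent with Lemma~\ref{lm:limit_nodes} once one notes that every $\xi\in\inodes_{z,\LL}$ has $\Re\xi\le-d_0<0$ with $d_0=d_0(\text{contour})$), so $|u-v|\ge cN^{-1/2}$ for $u\in\roots_{\zz,\LL}$, $v\in\roots_{\zz,\RR}$. Combining these with Lemma~\ref{lm:estimate_kernel}(c), any contribution to $A(u_1,u_2)$ in which $u_1$, $u_2$, or the summation variable $v$ lies at distance $>\rho\sqrt{1-\rho}N^{\epsilon/4-1/2}$ from $-\rho$ is $O(e^{-CN^{3\epsilon/4}})$ times a fixed power of $N$; since $|\roots_{\zz}|=L$ there are only polynomially many such terms, so it suffices to bound the restricted sum in which all of $u_1,u_2,v$ lie within $\rho\sqrt{1-\rho}N^{\epsilon/4-1/2}$ of $-\rho$.

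For the restricted sum I would substitute $u_i=-\rho+\rho\sqrt{1-\rho}\,\xi_iN^{-1/2}$ and $v=-\rho+\rho\sqrt{1-\rho}\,\zeta N^{-1/2}$ with $|\xi_i|,|\zeta|\le N^{\epsilon/4}$, and use: a direct expansion giving $E(u_i)\le C(1+|\xi_i|)$ and $E(v)\le C(1+|\zeta|)$ (the $N^{1/2}$ prefactor in $E$ cancels the rescaling $N^{-1/2}$ and leaves an $O(|\xi|)$ term, the other three summands being $O(1)$ since $|w|\to\rho$, $|w+1|\to 1-\rho$); Lemma~\ref{lm:estimate_kernel}(a)--(b), which give $|h_2(u_i)|\le CN^{1/2}|\xi_i|^{-1}e^{\Xi_z(\xi_i)}$ and $|h_2(v)|^{-1}\le CN^{-3/2}|\zeta|^{-1}e^{\Psi_z(\zeta)}$ up to factors $1+o(1)$ uniform in the node, where $\Xi_z(\xi)=\Re\Phi_z(\xi;\tau^{1/3}x,\tau)+\tfrac\gamma2\Re\xi^2$ on $\inodes_{z,\LL}$ and $\Psi_z(\zeta)=\Re\Phi_z(-\zeta;\tau^{1/3}x,\tau)-\tfrac\gamma2\Re\zeta^2$ on $\inodes_{z,\RR}$ (using $\hftn_\RR(\xi,z)=\hftn_\LL(-\xi,z)=-\tfrac1{\sqrt{2\pi}}\int_{-\infty}^{\xi}\polylog_{1/2}(e^{-\omega^2/2})\dd\omega$ for $\xi\in\inodes_{z,\LL}$ and $\inodes_{z,\RR}=-\inodes_{z,\LL}$); and the separations $|\xi_i-\zeta|\ge\Re\zeta-\Re\xi_i\ge2d_0>0$ and $|\xi_i|,|\zeta|\ge d_0$. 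The powers of $N$ cancel ($N^{1/2}$ from $\sqrt{|h_2(u_1)h_2(u_2)|}$, $N^{-3/2}$ from $|h_2(v)|^{-1}$, $N$ from $(|u_1-v||u_2-v|)^{-1}$), leaving, uniformly in $z$,
\begin{equation*}
A(u_1,u_2)\le C\,\alpha(\xi_1)\,\alpha(\xi_2)\sum_{\zeta}\frac{\beta(\zeta)}{|\xi_1-\zeta|\,|\xi_2-\zeta|},\qquad
\alpha(\xi):=\frac{1+|\xi|}{|\xi|^{1/2}}\,e^{\Xi_z(\xi)/2},\quad \beta(\zeta):=\frac{(1+|\zeta|)^2}{|\zeta|}\,e^{\Psi_z(\zeta)},
\end{equation*}
the sum running over the rescaled nodes of $\roots_{\zz,\RR}$.

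Then the bound follows by bookkeeping. Cauchy--Schwarz in $\zeta$ gives $\sum_\zeta\beta(\zeta)|\xi_1-\zeta|^{-1}|\xi_2-\zeta|^{-1}\le P(\xi_1)^{1/2}P(\xi_2)^{1/2}$ with $P(\xi):=\sum_\zeta\beta(\zeta)|\xi-\zeta|^{-2}\le(2d_0)^{-2}\sum_\zeta\beta(\zeta)$; hence $\sum_{u_2}|A(u_1,u_2)|^2\le C^2\alpha(\xi_1)^2P(\xi_1)\sum_{\xi_2}\alpha(\xi_2)^2P(\xi_2)$, so $(\sum_{u_2}|A(u_1,u_2)|^2)^{1/2}\le C\alpha(\xi_1)P(\xi_1)^{1/2}(\sum_{\xi_2}\alpha(\xi_2)^2P(\xi_2))^{1/2}$, and a last use of $P\le(2d_0)^{-2}\sum_\zeta\beta(\zeta)$ yields
\[
\sum_{u_1}\Bigl(\sum_{u_2}|A(u_1,u_2)|^2\Bigr)^{1/2}\le C'\Bigl(\sum_\zeta\beta(\zeta)\Bigr)\Bigl(\sum_\xi\alpha(\xi)^2\Bigr)^{1/2}\sum_\xi\alpha(\xi).
\]
It then remains to bound $\sum_\zeta\beta(\zeta)$, $\sum_\xi\alpha(\xi)$, $\sum_\xi\alpha(\xi)^2$ uniformly in $z$; since $|\xi|,|\zeta|\ge d_0$ and there are $\asymp R$ nodes of modulus in $[R,R+1]$, this reduces to the uniform super-exponential bounds $\Xi_z(\xi)\le-c|\xi|^3$ and $\Psi_z(\zeta)\le-c|\zeta|^3$ for $|\xi|,|\zeta|$ large — exactly the decay of $\Re\Phi_z$ along $\inodes_{z,\LL}$ established in \cite{Baik-Liu16} in order that $\det(I-\Ks_{z;x})$ be well-defined and uniformly bounded — while the finitely many small nodes and the $1+o(1)$ errors are absorbed into $C,C'$, and taking $C'$ large completes the proof.

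The step I expect to be the main obstacle is the uniformity in $z$: one must ensure that the decay rate in $\Xi_z,\Psi_z$ does not degenerate as $z$ runs over the contour (equivalently, as the smallest node $\xi_0(z)\in\inodes_{z,\LL}$ approaches the imaginary axis, where the cubic term $-\tfrac13\tau\Re\xi^3$ temporarily changes sign on the first few nodes before becoming uniformly negative), and that the $1+o(1)$ errors of Lemma~\ref{lm:estimate_kernel}, though uniform in each individual node, are summed over the $O(N^{\epsilon/2})$ participating nodes without spoiling the estimate. Both inputs are available from the uniform bounds of \cite{Baik-Liu16}; the real work is threading them through the two Cauchy--Schwarz steps with a single $z$-independent constant.
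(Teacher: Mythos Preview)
Your proposal is correct and follows essentially the same route as the paper: discard the nodes at distance $>\rho\sqrt{1-\rho}N^{\epsilon/4-1/2}$ from $-\rho$ using Lemma~\ref{lm:estimate_kernel}(c), rescale the remaining ones, insert the asymptotics of $E$ and of $h_2$ from Lemma~\ref{lm:estimate_kernel}(a)--(b), and observe that the powers of $N$ cancel. The only difference is cosmetic: the paper simply asserts that the resulting rescaled double sum converges to a limiting expression over $\inodes_{z,\LL},\inodes_{z,\RR}$ which ``is easy to see'' is finite, whereas you supply that finiteness explicitly via two Cauchy--Schwarz steps and the cubic decay of $\Re\Phi_z$ --- effectively filling in what the paper leaves to the reader.
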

	\begin{proof}[Proof of Claim~\ref{claim:01}]
		Note that $E(w)$ is always positive and  bounded by $c_1N^{1/2}+c_2$ uniformly on $\roots_{\zz}$. On the other hand, $h_2(u)$ and $h_2(v)^{-1}$ are exponentially small when $u\in\roots_{\zz,\LL}$ , $v\in\roots_{\zz,\RR}$ are  of distance $\ge O(N^{\epsilon/4-1/2})$, see Lemma~\ref{lm:estimate_kernel} (c). Thus it is sufficient to prove the following inequality
		\begin{equation}
		\label{eq:aux_2016_09_06_01}
		\sum_{\substack{u_1\in\roots_{\zz,\LL}\\ |u_1+\rho|\le N^{\epsilon/4-1/2}}}\sqrt{\sum_{\substack{u_2\in\roots_{\zz,\LL}\\ |u_2+\rho|\le N^{\epsilon/4-1/2}}}|h_2(u_1)h_2(u_2)E(u_1)^2E(u_2)^2|\left(\sum_{\substack{v\in\roots_{\zz,\RR}\\ |v+\rho|\le N^{\epsilon/4-1/2}}}\frac{|E(v)|^2}{|u_1-v||u_2-v||h_2(v)|}\right)^2}\le C.
		\end{equation}
		On the other hand, it is easy to check that 
		\begin{equation}
		\label{eq:aux_2016_09_06_03}
		E(-\rho+\rho\sqrt{1-\rho}\xi N^{-1/2})=3+\frac{2}{\sqrt{1-\rho}}|\xi| +O(N^{\epsilon-1/2})\le 3+C_1|\xi| +O(N^{\epsilon-1/2}),
		\end{equation}
		uniformly for all $|\xi|\le N^{\epsilon/4}$, here $C_1$ is a constant independent of $N$ (recall that $\rho=\rho_L\in (c_1,c_2)$ depends on $L$). We denote 
		\begin{equation}
		\tilde E(-\rho+\rho\sqrt{1-\rho}\xi N^{-1/2}):= \mbox{ the right hand side of }~\eqref{eq:aux_2016_09_06_03}.
		\end{equation} 
		Then~\eqref{eq:aux_2016_09_06_01} is reduced to
		\begin{equation}
		\label{eq:aux_2016_09_06_02}
		\sum_{\substack{u_1\in\roots_{\zz,\LL}\\ |u_1+\rho|\le N^{\epsilon/4-1/2}}}\sqrt{\sum_{\substack{u_2\in\roots_{\zz,\LL}\\ |u_2+\rho|\le N^{\epsilon/4-1/2}}}|h_2(u_1)h_2(u_2)\tilde E(u_1)^2\tilde E(u_2)^2|\left(\sum_{\substack{v\in\roots_{\zz,\RR}\\ |v+\rho|\le N^{\epsilon/4-1/2}}}\frac{|\tilde E(v)|^2}{|u_1-v||u_2-v||h_2(v)|}\right)^2}\le C.
		\end{equation}
				Using Lemmas~\ref{lm:limit_nodes} and~\ref{lm:estimate_kernel}, \Cb{we see that} the left hand side of~\eqref{eq:aux_2016_09_06_02} converges to 
		\begin{equation}
		\label{eq:aux_2016_09_06_04}
		\sum_{\xi_1\in\inodes_{z,\LL}}\sqrt{\sum_{\xi_2\in\inodes_{z,\RR}}\left|\frac{e^{\tilde\phi_\RR(\xi_1)+\tilde\phi_\RR(\xi_2)}}{\xi_1\xi_2}\right|\left(\sum_{\zeta\in\inodes_{z,\RR}}\frac{|e^{-\tilde{\phi}_\LL(\zeta)}|}{|\xi_1-\zeta||\xi_2-\zeta|}\right)^2},
		\end{equation}
		as $N\to\infty$, where $\tilde{\phi}_\RR(\xi):=\phi_\RR(\xi)+2\log\left(3+C_1|\xi|\right)$ and $\tilde{\phi}_\LL(\zeta):=\phi_\LL(\zeta)-2\log\left(3+C_1|\xi|\right)$. The rigorous proof of this convergence is similar to that of Lemma~\ref{lm:moments_convergent} and hence we do not provide details. Also it is easy to see that~\eqref{eq:aux_2016_09_06_04} is finite. Therefore~\eqref{eq:aux_2016_09_06_02} holds for sufficiently large $N$.
	\end{proof}

    Now we prove Lemma~\ref{lm:uniform_bound_difference_determinant}. This idea is to express the summand on the left hand side of~\eqref{eq:aux_2016_09_06_05} as a sum of determinants $\det\left[A^{(n)}(u_i,u_j)\right]$ where $A^{(n)}$ has similar structure of $A$ in Claim~\ref{claim:01}, and then apply the Hadamard's inequality. 
    
    The first step is to write
    \begin{equation}
    \det\left[\tilde{K}_{\zz;k,\yy}^{(2)}(u_i,u_j)\right]_{i,j=1}^m=
    \sum_{v_1,\cdots,v_m\in\roots_{\zz,\RR}}\det\left[\frac{\sqrt{h_{2;k,\yy}(u_i)}\sqrt{h_{2;k,\yy}(u_j)}}{(u_i-v_i)(u_j-v_i)h_{2;k,\yy}(v_i)}\right]_{i,j=1}^m
    \end{equation}
    by using a conjugation, here $\sqrt{h_{2;k,\yy}(u_i)}$ is the square root function with any fixed branch cut. Denote
    \begin{equation}
    H_{k,\yy}(u,u';v)=\frac{\sqrt{h_{2;k,\yy}(u)}\sqrt{h_{2;k,\yy}(u')}}{(u-v)(u'-v)h_{2;k,\yy}(v)}.
    \end{equation}
    Similarly to~\eqref{eq:aux_2016_09_06_06}, we have
    \begin{equation}
    \label{eq:aux_2016_09_06_07}
    \begin{split}
    &N^{1/2}\Delta_k\det\left[\tilde K_{\zz;k,\yy}^{(2)}(u_i,u_j)\right]_{i,j=1}^m\\
    =&N^{1/2}\sum_{v_1,\cdots,v_m\in\roots_{\zz,\RR}}\left(\prod_{i=1}^m\frac{(u_i+1)v_i}{(v_i+1)u_i}-1\right)\det\left[H_{k,\yy}(u_i,u_j;v_i)\right]_{i,j=1}^m\\
    =&\sum_{n=1}^m\sum_{v_1,\cdots,v_m\in\roots_{\zz,\RR}}N^{1/2}\left(\frac{-\rho(u_n+1)}{(1-\rho)u_n}-1\right)\prod_{i=1}^{n-1}\frac{-\rho(u_i+1)}{(1-\rho)u_i}\det\left[\frac{(1-\rho)v_iH_{k,\yy}(u_i,u_j;v_i)}{-\rho(v_i+1)}\right]_{i,j=1}^m\\
    &+\sum_{n=1}^m\sum_{v_1,\cdots,v_m\in\roots_{\zz,\RR}}N^{1/2}\left(1-\frac{-\rho(v_n+1)}{(1-\rho)v_n}\right)\prod_{i=1}^{n-1}\frac{-\rho(v_i+1)}{(1-\rho)v_i}\det\left[\frac{(1-\rho)v_iH_{k,\yy}(u_i,u_j;v_i)}{-\rho(v_i+1)}\right]_{i,j=1}^m\\
    =&\sum_{n=1}^m\det\left[A^{(n)}(u_i,u_j)\right]_{i,j=1}^m+\sum_{n=1}^m\det\left[\tilde A^{(n)}(u_i,u_j)\right]_{i,j=1}^m,
    \end{split}
    \end{equation}
    where 
    \begin{equation}
    A^{(n)}(u_i,u_j)=\begin{dcases}
    \frac{-\rho(u_i+1)}{(1-\rho)u_i}\sum_{v\in\roots_{\zz,\RR}}\frac{(1-\rho)v H_{k,\yy}(u_i,u_j;v)}{-\rho(v+1)},& 1\le i\le n-1,\\
    N^{1/2}\left(\frac{-\rho(u_n+1)}{(1-\rho)u_n}-1\right)\sum_{v\in\roots_{\zz,\RR}}\frac{(1-\rho)v H_{k,\yy}(u_i,u_j;v)}{-\rho(v+1)},&  i=n,\\
    \sum_{v\in\roots_{\zz,\RR}}\frac{(1-\rho)v H_{k,\yy}(u_i,u_j;v)}{-\rho(v+1)},& n+1\le i\le m,
    \end{dcases}
    \end{equation}
    and
    \begin{equation}
    \tilde A^{(n)}(u_i,u_j)=\begin{dcases}
    \sum_{v\in\roots_{\zz,\RR}} H_{k,\yy}(u_i,u_j;v),& 1\le i\le n-1,\\
    \sum_{v\in\roots_{\zz,\RR}} N^{1/2}\left(\frac{(1-\rho)v}{-\rho(v+1)}-1\right)H_{k,\yy}(u_i,u_j;v),&  i=n,\\
    \sum_{v\in\roots_{\zz,\RR}}\frac{(1-\rho)v H_{k,\yy}(u_i,u_j;v)}{-\rho(v+1)},& n+1\le i\le m.
    \end{dcases}
    \end{equation}
    It is easy to check that $|A^{(n)}(u_i,u_j)|$ and $|\tilde A^{(n)}(u_i,u_j)|$ are bounded by $|A(u_i,u_j)|$ defined in the Claim~\ref{claim:01}. By Hadamard's inequality, we have
    \begin{equation}
    \left|\det\left[A^{(n)}(u_i,u_j)\right]_{i,j=1}^m\right|\le \prod_{i=1}^m\sqrt{\sum_{1\le j\le m}|A^{(n)}(u_i,u_j)|^2}\le \prod_{i=1}^m\sqrt{\sum_{u'\in \roots_{\zz,\LL}}|A(u_i,u')|^2}
    \end{equation}
    for all distinct $u_1,\cdots,u_m\in\roots_{\zz,\LL}$. As a result,
    \begin{equation}
    \label{eq:aux_2016_09_06_08}
    \begin{split}
    \sum_{\substack{u_1,\cdots,u_m\in\roots_{\zz,\LL}\\ \mbox{\scriptsize all distinct}}}\left|\det\left[A^{(n)}(u_i,u_j)\right]_{i,j=1}^m\right|&\le \sum_{u_1,\cdots,u_m\in\roots_{\zz,\LL}}\prod_{i=1}^m\sqrt{\sum_{u'\in \roots_{\zz,\LL}}|A(u_i,u')|^2}\\
    &=\left(\sum_{u\in\roots_{\zz,\LL}}\sqrt{\sum_{u'\in \roots_{\zz,\LL}}|A(u,u')|^2}\right)^m\le C^m
    \end{split}
    \end{equation}
    by the Claim~\ref{claim:01}. Similarly we have
    \begin{equation}
        \label{eq:aux_2016_09_06_09}
    \sum_{\substack{u_1,\cdots,u_m\in\roots_{\zz,\LL}\\ \mbox{\scriptsize all distinct}}}\left|\det\left[\tilde A^{(n)}(u_i,u_j)\right]_{i,j=1}^m\right|\le C^m.
    \end{equation}Also note that $\Delta_k \det\left[\tilde K_{\zz;k,\yy}^{(2)}(u_i,u_j)\right]_{i,j=1}^m=0$ if $u_i=u_j$ for some $1\le i<j\le m$. By combing~\eqref{eq:aux_2016_09_06_07},~\eqref{eq:aux_2016_09_06_08} and~\eqref{eq:aux_2016_09_06_09} we obtain~\eqref{eq:aux_2016_09_06_05}.
\end{proof}

\subsection{Proof of Theorem~\ref{thm:limit_height_uniform}}
\label{sec:proof_Theorem_height}

Now we prove Theorem~\ref{thm:limit_height_uniform}(a). By using the estimates~\eqref{eq:aux_2016_09_05_09},~\eqref{eq:aux_2016_09_06_10},~\eqref{eq:estimate_determinant} and~\eqref{eq:estimate_difference_determinant}, we obtain
\begin{equation}
\label{eq:aux_2016_04_26_02}
\begin{split}
&\lim_{n\to\infty}\sqrt{1-\rho}N^{1/2}\Delta_k\left( \consts(\zz; k)\cdot \det\left(I + K_{\zz;k}^{(2)}\right)\right)\\
& = \left.\frac{\dd}{\dd y}\right|_{y=\tau^{1/3}x} \left(e^{yA_1(z)+\tau A_2(z) +2B(z)}\det\left(I -\Ks_{z;y}\right)\right).
\end{split}
\end{equation}
Furthermore, by the discussions below~\eqref{eq:estimate_determinant} and Lemma~\ref{lm:uniform_bound_difference_determinant}, we know the left hand side of~\eqref{eq:aux_2016_04_26_02} is uniformly bounded on $z$. 

On the other hand, by using the sterling's formula and~\eqref{eq:aux_2016_09_04_03}, we obtain
\begin{equation}
\label{eq:aux_2016_09_19_06}
\frac{(-1)^N}{{L\choose N}}\frac{1}{\sqrt{1-\rho} N^{1/2}\zz^L} = \frac{ N!(L-N)!}{L!\rho^N(1-\rho)^{L-N}\sqrt{1-\rho} N^{1/2}z} 
= \frac{\sqrt{2\pi} }{z}(1+O(N^{-1})).
\end{equation}

Theorem~\ref{thm:limit_height_uniform}  follows immediately by inserting the above two estimates into~\eqref{eq:prob_formula}.

\appendix
\section{Tail bound of the limiting distribution}
\label{sec:estimates}

In this appendix, we give some tail bounds related to the function $\FU$. These estimates are not optimal, however, they are sufficient to show that (1) $\FU(x;\tau,\gamma)$ is a distribution function, and (2) the $n$-th moments of $\frac{h_{t_L}(\yy_L)-(1-2\rho_L)\yy_L-2\rho_L(1-\rho_L)t_L}{-2\rho_L^{1/2}(1-\rho_L)^{1/2}L^{1/2}}$ converges to that of $\int_{\realR}x^n\dd \FU(x;\tau,\gamma)$ for any finite $n$ as $L\to\infty$, here $\rho_L,\yy_L$ and $t_L$ are defined in Theorem~\ref{thm:limit_height_uniform}. The second statement follows in the same way as Theorem 1 in \cite{Baik-Ferrari-Peche14}.

For simplification we only consider the case when $\tau=1$. For other values of $\tau$, the statements and proofs are the same (with different constants).

Define
\begin{equation}
F^{(L)}_U(x):=\prob\left(\frac{h_{t_L}(\yy_L)-(1-2\rho_L)\yy_L-2\rho_L(1-\rho_L)t_L}{-2\rho_L^{2/3}(1-\rho_L)^{2/3}t_L^{1/3}} \le x\right)
\end{equation}
and
\begin{equation}
G^{(L)}_U(x):=\frac{(-1)^{N_L+1}}{{L\choose N_L}}\oint  \consts(\zz; k_L,\yy_L+1)\cdot \det\left(I + K_{\zz;k_L,\yy_L+1}^{(2)}\right) \frac{\dd \zz}{2\pi \ii \zz^{L+1}}
\end{equation}
where the parameters and notations are the same as in Theorem~\ref{thm:limit_height_uniform}, and we \Cr{suppress} the parameters $\tau=1$ and $\gamma$ in the indices for simplification, and
\begin{equation}
k_L=1+ \rho_L\yy_L- \rho_L(1-\rho_L)t_L+x\rho_L^{2/3}(1-\rho_L)^{2/3}t_L^{1/3}.
\end{equation}
By using Theorem~\ref{thm:current_fluctuation}, it is easy to check 
\begin{equation}
F^{(L)}_U(x)=\frac{t_L^{-1/3}}{\rho_L^{2/3}(1-\rho_L)^{2/3}}\frac{\dd }{\dd \tilde x}G^{(L)}_U(\tilde x)
\end{equation}
where $\tilde x$ is an point satisfying $\tilde x=x+O(t_L^{-1/3})$.

\begin{prop} (Left tail bound of $\FU^{(L)}$)
	\label{prop:properties_FU}
	There exist constants $\alpha>0$, $c>0, C>0$ and $C'>0$, such that
	\begin{equation}
	\FU^{(L)}(x)\le e^{-c|x|^\alpha}
	\end{equation}
	for all $x\le -C$ and $L\ge C'|x|$.
\end{prop}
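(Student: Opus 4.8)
The plan is to estimate the finite‑$L$ contour integral directly, with the integration contour chosen to depend on $x$, and to read off the decay from the factor $e^{\tau^{1/3}xA_1(z)}$ in $\consts$ together with the behaviour of the Fredholm determinant. Recall from the appendix that $F_U^{(L)}(x)=\frac{t_L^{-1/3}}{\rho_L^{2/3}(1-\rho_L)^{2/3}}\frac{\dd}{\dd\tilde x}G_U^{(L)}(\tilde x)$, and that $G_U^{(L)}$ depends on $x$ only through $k_L$, which (up to $O(1)$ rounding) increases by one as $x$ increases by $(\rho_L^{2/3}(1-\rho_L)^{2/3}t_L^{1/3})^{-1}$; hence the derivative is a difference quotient and, up to the rounding, $F_U^{(L)}(x)$ equals the integral of $\Delta_k\bigl(\consts(\zz;k_L,\yy_L+1)\det(I+K^{(2)}_{\zz;k_L,\yy_L+1})\bigr)$ against $\frac{\dd\zz}{2\pi\ii\zz^{L+1}}$ — i.e. the formula of Theorem~\ref{thm:current_fluctuation} for $\prob(h_{t_L}(\yy_L)\ge b_L)$. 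After the substitution $\zz^L=(-1)^N\rr_0^Lz$ this reads $\frac{-1}{\binom{L}{N}\rr_0^L}\oint\Delta_k\bigl(\consts\cdot\det(I+K^{(2)})\bigr)\frac{\dd z}{2\pi\ii z^2}$, and by Stirling the scalar prefactor is $O(\sqrt L)$ while $\binom{L}{N}\rr_0^L\ge c/\sqrt L$, both uniformly in $L$.

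\textbf{Contour and estimates.} I would deform the $z$–contour to a curve $\Gamma=\Gamma(x)$ inside $|z|<1$ enclosing $0$, with radius/shape depending on $x$ — morally a steepest‑descent contour for the exponent $\tau^{1/3}xA_1(z)+\tau A_2(z)+2B(z)+\log\det(I-\Ks_{z;x})$ — and bound the integrand on $\Gamma$ uniformly in $L$. This requires $x$–quantitative refinements of the asymptotics already in the paper: (i) from Section~\ref{sec:asymptotics_C} and Lemma~\ref{lm:estimate_diff_consts}, bounds $|\consts(\zz;k_L,\yy_L+1)|\le e^{\tau^{1/3}x\,\Re A_1(z)+O(|z|)+\mathrm{err}}$ and $|\Delta_k\consts|\le C|A_1(z)|N^{-1/2}|\consts|$, uniformly for $z\in\Gamma$ and $|x|\le L/C'$; (ii) from Section~\ref{sec:asymptotics_determinant} and Lemmas~\ref{lm:estimate_kernel},~\ref{lm:uniform_bound_difference_determinant}, bounds on $|\det(I+K^{(2)})|$, $|\Delta_k\det(I+K^{(2)})|$ — hence on $|\det(I-\Ks_{z;x})|$ and $|\tfrac{\dd}{\dd x}\det(I-\Ks_{z;x})|$ — that track both $x$ and the position of $z$ relative to $0$, using that the nodes $\inodes_{z,\LL}$ have $|\xi|\asymp\sqrt{|\log|z||}$ and that $\Re\Phi_z(\xi;\tau^{1/3}x,\tau)$ is controlled by the competition between $-\tfrac13\tau\xi^3$ and $\tau^{1/3}x\xi$. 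The hypothesis $L\ge C'|x|$ is precisely what keeps the scaled parameters inside the window in which the node lemmas (Lemmas~\ref{lm:limit_nodes},~\ref{lm:limits_ind_ftns},~\ref{lm:estimate_kernel}) hold with uniform error terms. Choosing $\Gamma$ so that the saddle sits where these two terms balance — which pushes $z_\ast\to0$ with $|\log|z_\ast||\asymp|x|$ — one should get $\Re(\text{exponent})\le -c|x|^{\alpha}$ on $\Gamma$ for a suitable $\alpha>0$ (one expects $\alpha=3$, matching the Baik–Rains left tail via~\eqref{eq:aux_2016_09_03_05}, but any fixed $\alpha>0$ suffices, giving slack in the contour choice), while the remaining rational and Vandermonde factors contribute at most polynomially in $|x|$ and $L$; combining with the $O(\sqrt L)$ prefactor yields the claim.

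\textbf{Main obstacle.} The delicate point is step (ii) carried out \emph{uniformly} as $x\to-\infty$. A plain circular contour does not work: on the arc near the positive real $z$–axis the nodes $\inodes_{z,\LL}$ sit near the negative real $\xi$–axis, where $\Re(-\tfrac13\tau\xi^3)>0$ and the kernel entries blow up exponentially in $|x|$, so one must genuinely locate and justify a steepest‑descent contour near the degenerate point $z=0$ where the node set escapes to infinity, and control the Fredholm determinant — and, at finite $L$, its discrete $k$–difference — there. Merely bounding the integrand by its supremum times the contour length only gives boundedness of $F_U^{(L)}$, because the $|A_1(z)|\asymp|z|$ smallness is exactly cancelled by the $z^{-2}$ in the measure after integrating over a circle; so the real content is the cancellation/decay in the $z$–integral. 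Extending the Hadamard/Claim~\ref{claim:01} argument of Lemma~\ref{lm:uniform_bound_difference_determinant} to carry the $x$–dependence, and verifying the steepest‑descent estimate for the $\log\det$ term, is where the work concentrates; the rest is a routine (if lengthy) repetition of the computations of Section~\ref{sec:asymptotics}, with constants now allowed to depend on $x$ through the stated bounds.
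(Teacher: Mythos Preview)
Your approach is genuinely different from the paper's, and the gap you identify at the end is real and, as far as I can see, not resolved by the plan you outline.

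The paper does \emph{not} estimate the contour integral for the left tail. Instead it maps the periodic TASEP to a periodic directed last passage percolation (DLPP) model, writes $\FU^{(L)}(x)=\prob_{\mr v}(H_\Gamma(\mr q)\le t_L)$ for an explicit point $\mr q$, and then compares the periodic DLPP to ordinary DLPP. Concretely, it chops the segment $\overline{\mr 0\,\mr q}$ into $k\asymp|x|$ pieces, uses super-additivity $H_{\mr 0}(\mr q)\ge\sum_i H_{\mr q^{(i)}}(\mr q^{(i+1)})$, and controls each piece by (i) a known one-point lower-tail bound for exponential DLPP and (ii) a transversal fluctuation estimate ensuring that the maximizing path in each piece stays inside one periodic cell, so the periodic and non-periodic last passage times agree. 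The constraint $L\ge C'|x|$ enters as the condition $|\mr q|\gtrsim k$ needed for these lemmas. This argument is purely probabilistic and never touches the Fredholm determinant.

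The analytic route you propose is exactly what the paper uses for the \emph{right} tail (Proposition~\ref{prop:properties_GU}), where the contour $|z|=e^{-x}$ makes the kernel exponentially small and $\det(I+\tilde K^{(2)})=1+O(e^{-cx^{3/2}})$. For the left tail the situation is reversed: with $x\ll 0$ and $\xi\in\inodes_{z,\LL}$ one has $\Re(\tau^{1/3}x\xi)>0$, and on your proposed contour $|z|\sim e^{-|x|}$ the dominant node $\xi\approx-\sqrt{2|x|}$ also gives $\Re(-\tfrac13\tau\xi^3)>0$, so the kernel entries are exponentially \emph{large} in $|x|^{3/2}$. The smallness of $\FU^{(L)}(x)$ therefore cannot come from smallness of the integrand on any obvious contour; it comes from cancellation inside $\det(I-\Ks_{z;\tau^{1/3}x})$ (eigenvalues of $\Ks$ approaching $1$), which is the hard ``lower tail'' regime for Fredholm determinants. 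The Hadamard/Claim~\ref{claim:01} machinery bounds absolute values term by term and cannot detect this cancellation, so extending it as you suggest will not close the gap. The paper's DLPP argument sidesteps this analytic difficulty entirely, at the cost of importing the DLPP tail and transversal-fluctuation inputs.
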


\begin{prop}
	(Right tail bound of $G^{(L)}_U$)
	\label{prop:properties_GU}
		There exist constants $\alpha>0$, $c>0, C>0$ and $C'>0$, such that
		\begin{equation}
		\label{eq:aux_2016_10_06_02}
		\left|x+1-\frac{t_L^{-1/3}}{\rho_L^{2/3}(1-\rho_L)^{2/3}}G_U^{(L)}(x)\right|\le e^{-cx^\alpha}
		\end{equation}
		for all $x\ge C$ and $L\ge C'x^6$.\footnote{For general $\tau$, the term $x+1$ in~\eqref{eq:aux_2016_10_06_02} should be replaced by $x+\tau$.}
\end{prop}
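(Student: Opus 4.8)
The plan is to estimate the contour integral defining $G^{(L)}_U$ by a residue at the origin. Applying the substitution $\zz^L=(-1)^{N_L}\rr_0^Lz$ of Section~\ref{sec:asymptotics} (cf.~\eqref{eq:aux_2016_09_04_03}) to the formula for $G^{(L)}_U$, one obtains
\[
G^{(L)}_U(x)=-\frac{1}{{L\choose N_L}\rr_0^L}\oint_{|z|=r}\frac{D_L(z)}{z^2}\,\frac{\dd z}{2\pi\ii},\qquad 0<r<1,
\]
where $D_L(z):=\consts(\zz;k_L,\yy_L+1)\det(I+K_{\zz;k_L,\yy_L+1}^{(2)})$ is holomorphic in $0<|z|<1$, extends across $z=0$ with $D_L(0)=1$ (as $\zz\to0$ the root set $\roots_\zz$ degenerates to $\{0,-1\}$, $\consts\to1$ and $K_\zz^{(2)}$ vanishes), and the integral equals the Laurent coefficient $[z^1]D_L$ independently of $r$. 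Using Stirling's formula for ${L\choose N_L}\rr_0^L$ (cf.~\eqref{eq:aux_2016_09_19_06}), the proposition (for $\tau=1$) reduces to showing $[z^1]D_L=-\tfrac{x+1}{\sqrt{2\pi}}+(\text{error})$ with the error controlled by $e^{-cx^\alpha}$ once $L\ge C'x^6$ (up to the finite-size corrections from Stirling, which are of lower order and vanish in the $L\to\infty$ regime used in the applications).

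I would then split $D_L=\consts(\zz;k_L,\yy_L+1)+\consts(\zz;k_L,\yy_L+1)\bigl(\det(I+K_{\zz;k_L,\yy_L+1}^{(2)})-1\bigr)$, so $[z^1]D_L=[z^1]\consts(\zz)+[z^1]\bigl(\consts(\zz)(\det-1)\bigr)$. The first term is the ``deterministic'' main term: from the exact contour-integral representation of $\log\consts=\log\mathcal{C}_{N,1}^{(2)}+\log\mathcal{C}_{N,2}^{(2)}$ used in Section~\ref{sec:asymptotics_C} (equations~\eqref{eq:sum_to_integral}--\eqref{eq:aux_2016_09_05_10}), together with $\consts(\zz;k,\yy)\to e^{\tau^{1/3}xA_1(z)+\tau A_2(z)+2B(z)}$ and the fact that $A_1,A_2,B$ vanish at $z=0$ while $k_L$ enters \emph{affinely} in $x$, one finds $[z^1]\consts(\zz)=\tau^{1/3}xA_1'(0)+\tau A_2'(0)+2B'(0)+O(N^{-1/2})=-\frac{\tau^{1/3}x+\tau}{\sqrt{2\pi}}+O(N^{-1/2})=-\frac{x+1}{\sqrt{2\pi}}+O(N^{-1/2})$; the point is that the $x$-dependence sits in $A_1'(0)=-\frac1{\sqrt{2\pi}}$, so the coefficient is linear in $x$ with the claimed leading value.

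The heart of the matter is the Fredholm piece $[z^1]\bigl(\consts(\zz)(\det(I+K_\zz^{(2)})-1)\bigr)$, which supplies the exponential smallness. I would compute it on the circle $|z|=r$ with $r\asymp e^{-x/2}$, chosen so that the dominant node of $\inodes_{z,\LL}$ sits at $|\xi|\asymp\sqrt{-\log r}\asymp\sqrt x$ and $|\consts(\zz)|=O(1)$ there. By the expansion $g_2(w)=e^{\tau^{1/3}x\xi+\frac\gamma2\xi^2-\frac\tau3\xi^3}(1+o(1))$ and Lemma~\ref{lm:estimate_kernel}, the entries of $K_\zz^{(2)}$ are governed by factors $e^{-\frac\tau3\xi^3+\tau^{1/3}x\xi+\frac\gamma2\xi^2}$ (the $\hftn_\RR$-integral being bounded); balancing the cubic term against the linear term over the choice of $r$ gives $\Re\bigl(-\frac\tau3\xi^3+\tau^{1/3}x\xi\bigr)\le-cx^{3/2}$ on the dominant node, hence $|\det(I+K_\zz^{(2)})-1|=O(e^{-cx^{3/2}})$ uniformly on $|z|=r$ (the number of active nodes being polynomial, by Lemma~\ref{lm:limit_nodes}(c)), and a Cauchy estimate yields a contribution of size $O(r^{-1}e^{-cx^{3/2}})=O(e^{-c'x^{3/2}})$. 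Thus $\alpha=3/2$ works (the exponent is not optimized). Uniform convergence of the Fredholm series on this contour follows as in Section~\ref{sec:asymptotics_Delta_determinant} (Claim~\ref{claim:01} and Hadamard's inequality).

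The main obstacle is the \emph{simultaneous} uniformity in $x$ and $L$. The asymptotic inputs from~\cite{Baik-Liu16} — Lemmas~\ref{lm:limit_nodes},~\ref{lm:limits_ind_ftns},~\ref{lm:estimate_kernel} and the expansions of Section~\ref{sec:asymptotics_C} — are stated for a \emph{fixed} $z$ with $L\to\infty$, whereas here $z$ moves on a circle of radius $r=r(x)$ shrinking with $x$, and $x$ may itself grow with $L$; near $z=0$ the node set $\inodes_{z,\LL}$ escapes to infinity. One therefore has to re-derive these expansions while tracking the dependence of the error terms on $s:=-\log r\asymp x$ — which is polynomial in $s$ — and control their interplay with the bulk cutoff $|\xi|\le N^{\epsilon/4}$; it is precisely the requirement $L\ge C'x^6$ (with $\epsilon$ taken close to $1/2$) that renders these errors negligible for the relevant range of $x$. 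The remaining points — holomorphy of $D_L$ at $0$, boundedness of $\consts(\zz)$ on the chosen contour, and the bookkeeping of the Stirling correction — are routine.
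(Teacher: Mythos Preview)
Your approach is essentially the same as the paper's. Both proofs make the substitution $\zz^L=(-1)^N\rr_0^Lz$, recognize that the integral extracts the $z^1$-coefficient of $\consts\cdot\det(I+K_\zz^{(2)})$, shrink the $z$-contour to a circle of radius exponentially small in $x$ (the paper takes $|z|=e^{-x}$, you take $r\asymp e^{-x/2}$; either works), show that on this circle $\consts\approx 1-\tfrac{x+1}{\sqrt{2\pi}}z$ while $\det-1=O(e^{-cx^{3/2}})$ via the kernel estimate $\Re(-\tfrac13\xi^3+x\xi)\le -cx^{3/2}$ on $\inodes_{z,\LL}$, and finish with Stirling. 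Your explicit splitting $D_L=\consts+\consts(\det-1)$ and the framing as a residue computation are cosmetic; the paper does the same estimate in one pass on its chosen contour. Your identification of the main obstacle (uniformity of the Baik--Liu lemmas as $x$ grows with $L$, resolved by $L\ge C'x^6$) matches the paper's ``provided $L\gg x^6$''.

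One small point: your computation of $[z^1]\consts$ invokes the limiting relation $\consts\to e^{xA_1(z)+\tau A_2(z)+2B(z)}$ and then reads off derivatives at $z=0$, but the error $O(N^{\epsilon-1/2})$ in that relation is stated for fixed $z$ and carries hidden $x$-dependence through $k_L$. To control $[z^1]$ you still need Cauchy on a contour where the error is uniform in $z$ \emph{and} $x$---which is exactly what the paper does by working directly on $|z|=e^{-x}$ and tracking that the $O(L^{-1/3})$ error there is of the form $c+c'z+O(z^2L^{-1/3})$ with $c,c'=O(L^{-1/3})$. This is not a gap in your strategy, just a place where the paper is slightly more explicit about why the $x$-uniform bound goes through.
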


Although we use the same notations of constants $\alpha, c, C$ and $C'$ in the above two propositions, their values are not the same.

We also remark that these two propositions are analogous to Proposition 1 and 2 in \cite{Baik-Ferrari-Peche14}.

\subsection{Proof of Proposition~\ref{prop:properties_FU}}

The idea of the proof is to map the periodic TASEP to the periodic directed last passage percolation (DLPP). The relation was discussed in \cite{Baik-Liu16} and \cite{Baik-Liu16b} and we refer the readers to Section 3.1 of \cite{Baik-Liu16b} for more details. Here we  give a brief description. 

We first introduce the periodic TASEP. This is equivalent to TASEP on $\conf_N(L)$ except we have infinitely many copies of particles, which satisfy $x_{k+N}(t)=x_{k}(t)+L$ for all $k\in\intZ$. 

Similarly to the mapping between the infinite TASEP and usual DLPP, see \cite{Johansson00}, there is a mapping from periodic TASEP to periodic DLPP described as following: Let $\mr v=(L-N,-N)$ be the period vector, and $\Gamma$ be a lattice path with lower left corners $(i+x_{N+1-i}(0),i)$ for $i\in\intZ$. It is easy to check that $\Gamma$ is invariant if translated by $\mr v$.  Let $w(\mr q)$ be random exponential variables with parameter $1$ for all lattice points $\mr q$ which are on the upper right side of $\Gamma$. We require $w(\mr q)=w(\mr q+\mr v)$ for all $\mr q$. Except for this restriction, all $w(\mr q)$ are independent. We then define
\begin{equation}
H_{\mr p}(\mr q)=\max_{\pi}\sum_{\mr r\in\pi}w(\mr r)
\end{equation}
where the maximum is over all the possible up/right lattice paths from $\mr p$ to $\mr q$. We also define
\begin{equation}
H_{\Gamma}(\mr q)=\max_{\mr p}H_{\mr p}(\mr q).
\end{equation}

Now we are ready to introduce the relation between particle location in periodic \Cr{TASEP} and last passage time in periodic DLPP, see (3.7) in \cite{Baik-Liu16b},
\begin{equation}
\label{eq:aux_2016_09_19_01}
\prob_{\mr v}\left(x_k(t)\ge a \right)=\prob_{\mr v}\left(H_\Gamma(N+a-k,N+1-k)\le t\right),
\end{equation}
where we use the notation $\prob_{\mr v}$ to denote the probability functions in periodic TASEP and the equivalent periodic DLPP model. Using~\eqref{eq:aux_2016_09_19_01} and the relation between height function $h_{t}(\yy_L)$ and the particle location $x_k(t)$, see~\eqref{eq:aux_2016_08_29_07}, it is straightforward to show the following
\begin{equation}
\FU^{(L)}(x)=\prob_{\mr v}\left(H_{\Gamma}(\mr q)\le t_L\right)
\end{equation}
where $\mr q=(\mr q_1,\mr q_2)$ with
\begin{equation}
\label{eq:aux_2016_09_18_05}
\begin{split}
\mr q_1&=(1-\rho_L)^2t_L+\gamma(1-\rho_L)L-x\rho_L^{2/3}(1-\rho_L)^{2/3}t_L^{1/3},\\
\mr q_2&=\rho_L^2t_L-\gamma\rho_LL-x\rho_L^{2/3}(1-\rho_L)^{2/3}t_L^{1/3}.
\end{split}
\end{equation}
The rest of this section is to show that there exist constants $\alpha>0$, $c>0$, $C>0$, and  $C'>0$, such that 
\begin{equation}
\label{eq:aux_2016_09_18_02}
\prob_{\mr v}\left(H_{\Gamma}(\mr q)\le t_L\right) \le e^{-c|x|^\alpha}
\end{equation}
for all $x<-C$ and $L\ge C'|x|$. Then Proposition~\ref{prop:properties_FU} follows immediately.

\vspace{0.2cm}
The idea to prove~\eqref{eq:aux_2016_09_18_02} is to compare the periodic DLPP with the usual DLPP. This idea was applied in \cite{Baik-Liu16b} for periodic TASEP in sub-relaxation time scale. In the case we consider in this paper, we need a relaxation time analogous of the argument. We first introduce some known results on DLPP model. The probability space for DLPP is that all the lattice points $\mr q$ are associated with an \iid exponential random variable $w(\mr q)$, we use $\prob$ to denote the probability associated to this space. Similarly to the periodic DLPP, we denote $G_{\mr p}(\mr q)$ the last passage time from $\mr p$ to $\mr q$, and $G_{\Lambda}(\mr q)$ the last passage time from the lattice path $\Lambda$ to $\mr q$. Finally we define $B(c_1,c_2):=\{\mr q=(\mr q_1,\mr q_2)\in\intZ_{\ge 0}^2; c_1\mr q_1\le \mr q_2\le c_2\mr q_1\}$ for arbitrary constants $c_1,c_2$ satisfying $0<c_1<c_2$. From now on we fix these two constants $c_1$ and $c_2$. It is known that \cite{Johansson00}
\begin{equation}
\lim_{\substack{|\mr q|\to\infty\\ \mr q\in B(c_1,c_2)}}\prob\left(\frac{G(\mr q)-d(\mr q)}{s(\mr q)}\le x\right)=\FGUE(x),
\end{equation}
where $d(\mr q)=(\sqrt{\mr q_1}+\sqrt{\mr q_2})^2$ and $s(\mr q)=(\mr q_1\mr q_2)^{-1/6}(\sqrt{\mr q_1}+\sqrt{\mr q_2})^{4/3}$.
The following tail estimate is also needed, which is due to \cite{Baik-Ben_Arous-Peche05,Baik-Ferrari-Peche14},
\begin{equation}
\label{eq:aux_2016_09_18_03}
\prob\left(\frac{G(\mr q)-d(\mr q)}{s(\mr q)}\le -y\right)\le e^{-c_3y}
\end{equation}
for sufficiently large $y\ge C_1$ and $\mr q\in B(c_1,c_2)$ satisfying $|\mr q|\ge C'_1$. Here $c_3$, $C_1$ and $C'_1$ are constants only depend on $c_1$ and $c_2$.  The last result in DLPP we need is \Cb{an estimate of} the transversal fluctuations. Define $B_{\overline{\mr p\mr q}}(y)$ to be the set of all lattice points $\mr r$ satisfying
\begin{equation}
\dist\left( \mr r, \overline{\mr p\mr q}\right)\le y|\mr q-\mr p|^{2/3},
\end{equation}
\Cr{where $\overline{\mr p\mr q}$ denotes the line passing through the two points $\mr p$ and $\mr q$, and $\dist\left( \mr r, \overline{\mr p\mr q}\right)$ denotes the distance between the point $\mr r$ and the line $\overline{\mr p\mr q}$.}
We also define $\pi_{\mr p}^{max}(\mr q)$ to be the maximal path from $\mr p$ to $\mr q$ in the usual DLPP.  The following transversal fluctuation estimate is currently known: There exist constants $c_4$, $C_2$ and $C'_2$ such that
\begin{equation}
\label{eq:aux_2016_09_18_04}
\prob\left(\pi_{\mr 0}^{max}(\mr q)\subseteq B_{\overline{\mr 0\mr q}}(y)\right)\ge 1-e^{-c_4y}
\end{equation}
for all $y\ge C_2$ and and $\mr q\in B(c_1,c_2)$ satisfying $|\mr q|\ge C'_2$. The analog of this estimate in Poissonian version of DLPP was obtained in \cite{Basu-Sidoravicius-Sly16} and their idea can be applied in the exponential case similarly. We hence do not provide a proof here, instead we refer the readers to a forthcoming paper \cite{Nejjar16} by Nejjar for more discussions.

\vspace{0.2cm}

Now we use~\eqref{eq:aux_2016_09_18_03} and~\eqref{eq:aux_2016_09_18_04} to prove~\eqref{eq:aux_2016_09_18_02}. We pick $k+1$ equidistant points $\mr 0=\mr q^{(0)}, \mr q^{(1)}\cdots, \mr q^{(k)}=\mr {q}$ on the line $\overline{\mr {0}\mr q}$ such that
\begin{equation}
\dist\left(\mr v, \overline{\mr 0\mr q}	\right)\ge C_2|\mr q^{(i+1)}-\mr q^{(i)}|^{2/3},\qquad i=0,1,\cdots, k-1,
\end{equation} 
here $k$ is some large parameter which will be decided later. Note that $\dist\left(\mr v, \overline{\mr 0\mr q}\right)=O(|\mr q|^{2/3})$, hence the above inequality is satisfied as long as $k$ is greater than certain constant.

Now note that $H_{\Gamma}(\mr q)\ge H_{(1,1)}(\mr q)=H_{\mr 0}(\mr q)+O(1)$ since $(1,1)$ is at the upper right side of to the initial contour $\Gamma$ by definition, and $H_{\mr 0}(\mr q)\ge \sum_{i=0}^{k-1}H_{\mr q^{(i)}}(\mr q^{(i+1)})$, therefore
\begin{equation}
\label{eq:aux_2016_09_18_06}
\prob_{\mr v}\left(H_{\Gamma}(\mr q)\le t_L\right)\le k\prob_{\mr v}\left(H_{\mr 0}(\mr q^{(1)})\le t_L/k\right).
\end{equation} 
On the other hand, by using~\eqref{eq:aux_2016_09_18_04} we know that
\begin{equation}
\label{eq:aux_2016_09_18_07}
\prob_{\mr v}\left(H_{\mr 0}(\mr q^{(1)})\le t_L/k\right) \le \prob \left(G_{\mr 0}(\mr q^{(1)})\le t_L/k\right) +e^{-c_4k^{2/3}|\mr q|^{-2/3}\dist(\mr v,\overline{\mr 0\mr q})}
\end{equation}
provided $|\mr q|\ge C'_2k$. Finally, by inserting~\eqref{eq:aux_2016_09_18_05} and then applying~\eqref{eq:aux_2016_09_18_03}, we have
\begin{equation}
\label{eq:aux_2016_09_18_08}
\prob \left(G_{\mr 0}(\mr q^{(1)})\le t_L/k\right) \le e^{-c_5k^{-2/3}|x|}
\end{equation}
provided $|\mr q|\ge C'_1k$ and $x<-C$, where $c_5$ and $C$ are constants. By combing~\eqref{eq:aux_2016_09_18_06},~\eqref{eq:aux_2016_09_18_07} and~\eqref{eq:aux_2016_09_18_08}, we obtain
\begin{equation}
\prob_{\mr v}\left(H_{\Gamma}(\mr q)\le t_L\right)\le ke^{-c_5k^{-2/3}|x|}+ke^{-c_4k^{2/3}|\mr q|^{-2/3}\dist(\mr v,\overline{\mr 0\mr q})}
\end{equation}
Finally we pick $k=|x|$ and~\eqref{eq:aux_2016_09_18_02} follows immediately.
\subsection{Proof of Proposition~\ref{prop:properties_GU}}

The proof is similar to that of Theorem~\ref{thm:limit_height_uniform} but we do not need to handle the difference operator. We only provide the main ideas here.

First we do the same change of variables as in~\eqref{eq:prob_formula} and write

\begin{equation}
G_U^{(L)}(x)=\frac{(-1)^{N+1}}{{L\choose N}}\oint\zz^{-L} \left(\consts(\zz; k,\yy+1)\cdot \det\left(I + K_{\zz;k,\yy+1}^{(2)}\right)\right)\ddbar{z}.
\end{equation}
Now we assume $x$ is large and  pick $z$ on the following circle
\begin{equation}
\label{eq:aux_2016_09_19_02}
|z|=e^{-x}.
\end{equation}
With this choice of $z$, by using a similar argument as in Section~\ref{sec:asymptotics_C} we have
\begin{equation}
\label{eq:aux_2016_09_19_08}
\consts(\zz; k,\yy+1)=e^{xA_1(z)+A_2(z)+2B(z)}(1+O(L^{-1/3}))=\left(1-\frac{1}{\sqrt{2\pi}}(x+1)z\right)(1+O(L^{-1/3}))+O(ze^{-cx})
\end{equation}
provided $L\gg x^6$. By tracking the error terms, the term $O(L^{-1/3})$ is analytic in $z$ and can be expressed as $c+c'z+O(z^2L^{-1/3})$ with $c,c'$ both bounded by $O(L^{-1/3})$.

 Similarly to Section~\ref{sec:asymptotics_determinant}, we write $\det\left(I + K_{\zz;k,\yy+1}^{(2)}\right)$ as $\det(I+ \tilde K_{\zz;k,\yy}^{(2)})$ whose kernel is defined in~\eqref{eq:aux_2016_09_05_15}. By a similar argument as Lemma~\ref{lm:estimate_kernel}, one can show that the kernel decays exponentially
\begin{equation}
\label{eq:aux_2016_09_19_03}
\left|\tilde K_{\zz;k,\yy}^{(2)}(\xi,\eta)\right|\le e^{-c\left(\Re \left(-\frac{1}{3}\xi^3+x\xi\right)+\left(-\frac{1}{3}\eta^3+x\eta\right)\right)}
\end{equation}
for all $\xi,\eta\in \inodes_{z,\LL}$ and sufficiently large $x$. Here $c>0$ is a constant. The heuristic argument is as following: Suppose $\xi=a+\ii b\in \inodes_{z,\LL}$ with $a<0$, then $a^2-b^2=2x$ by~\eqref{eq:aux_2016_09_19_02}. It is a direct to show that the leading term  in the exponent of $h(u)$ in Lemma~\ref{lm:estimate_kernel} (a) (after dropping the term $\frac12\gamma\xi^2$, whose real part is independent of $\xi$ and hence cancels with the counterpart from $1/h(v)$) is
\begin{equation}
\label{eq:aux_2016_09_19_04}
\Re\left(-\frac{1}{3}\xi^3+x\xi\right)=\frac{2}{3}a^3-xa\le \frac{1}{3}xa\le -\frac{2}{3}x^{3/2}\ll 0.
\end{equation}
Similar estimates for the \Cb{leading term in the exponent} of $1/h(v)$ in  Lemma~\ref{lm:estimate_kernel} (b) hold. Therefore we have~\eqref{eq:aux_2016_09_19_03}. Finally, by using~\eqref{eq:aux_2016_09_19_03} and~\eqref{eq:aux_2016_09_19_04}, it is a direct to prove that
\begin{equation}
\label{eq:aux_2016_09_19_07}
\det(I+ \tilde K_{\zz;k,\yy}^{(2)})= 1+ O(e^{-cx^{3/2}})
\end{equation}
for a different positive constant $c$. Since the above argument is similar to that in Section~\ref{sec:asymptotics_determinant}, we do not provide details.

Finally by combing~\eqref{eq:aux_2016_09_19_06},~\eqref{eq:aux_2016_09_19_08} and~\eqref{eq:aux_2016_09_19_07}, also noting that $\zz^L=(-1)^{N}\rr_0^Lz$, we obtain that
\begin{equation}
G_U^{(L)}(x)={\sqrt{\rho_L(1-\rho_L)}L^{1/2}}\left(x+1+O(e^{-cx})\right).
\end{equation}
Hence we obtain Proposition~\ref{prop:properties_GU}.


\begin{thebibliography}{10}
	
	\bibitem{Baik-Ben_Arous-Peche05}
	J.~Baik, G.~B.~Arous, and S.~P{\'e}ch{\'e}.
	\newblock Phase transition of the largest eigenvalue for nonnull complex sample
	covariance matrices.
	\newblock {\em Ann. Probab.}, 33(5):1643--1697, 2005.
	
	\bibitem{Baik-Ferrari-Peche10}
	J.~Baik, P.~L.~Ferrari, and S.~P{\'e}ch{\'e}.
	\newblock Limit process of stationary {TASEP} near the characteristic line.
	\newblock {\em Comm. Pure Appl. Math.}, 63(8):1017--1070, 2010.
	
	\bibitem{Baik-Ferrari-Peche14}
	J.~Baik, P.~L.~Ferrari, and S.~P{\'e}ch{\'e}.
	\newblock Convergence of the two-point function of the stationary {TASEP}.
	\newblock In {\em Singular phenomena and scaling in mathematical models}, pages
	91--110. Springer, Cham, 2014.
	
	\bibitem{Baik-Liu16}
	J.~Baik and Z.~Liu.
	\newblock Fluctuations of {TASEP} on a ring in relaxation time scale.
	\newblock 2016.
	\newblock arXiv:1605.07102.
	
	\bibitem{Baik-Liu16b}
	J.~{Baik} and Z.~{Liu}.
	\newblock Tasep on a ring in sub-relaxation time scale.
	\newblock 2016.
	\newblock {\em J. Statist. Phys.}, 165(6):1051--1085, 2016.
	
	\bibitem{Baik-Rains00}
	J.~Baik and E.~M.~Rains.
	\newblock Limiting distributions for a polynuclear growth model with external
	sources.
	\newblock {\em J. Statist. Phys.}, 100(3-4):523--541, 2000.
	
	\bibitem{Basu-Sidoravicius-Sly16}
	R.~Basu, V.~Sidoravicius, and A.~Sly.
	\newblock {Last Passage Percolation with a Defect Line and the Solution of the
		Slow Bond Problem}.
	\newblock arxiv:1408.3464.
	
	\bibitem{Derrida-Lebowitz98}
	B.~Derrida and J.~L.~Lebowitz.
	\newblock Exact large deviation function in the asymmetric exclusion process.
	\newblock {\em Phys. Rev. Lett.}, 80(2):209--213, 1998.
	
	\bibitem{Ferrari-Spohn06}
	P.~L.~Ferrari and H.~Spohn.
	\newblock Scaling limit for the space-time covariance of the stationary totally
	asymmetric simple exclusion process.
	\newblock {\em Comm. Math. Phys.}, 265(1):1--44, 2006.
	
	\bibitem{Gwa-Spohn92}
	L.-H.~Gwa and H.~Spohn.
	\newblock Six-vertex model, roughened surfaces, and an asymmetric spin
	{H}amiltonian.
	\newblock {\em Phys. Rev. Lett.}, 68(6):725--728, 1992.
	
	\bibitem{Johansson00}
	K.~Johansson.
	\newblock Shape fluctuations and random matrices.
	\newblock {\em Comm. Math. Phys.}, 209(2):437--476, 2000.
	
	\bibitem{Meakin-Ramanlal-Sander-Ball86}
	P.~Meakin, P.~Ramanlal, L.~M. Sander, and R.~C. Ball.
	\newblock Ballistic deposition on surfaces.
	\newblock {\em Phys. Rev. A}, 34:5091--5103, Dec 1986.
	
	\bibitem{Nejjar16}
	P.~Nejjar.
	\newblock Transition to shock fluctuations in {TASEP}.
	\newblock Upcoming.
	
	\bibitem{Prahofer-Spohn02a}
	M.~Pr{\"a}hofer and H.~Spohn.
	\newblock Current fluctuations for the totally asymmetric simple exclusion
	process.
	\newblock In {\em In and out of equilibrium (Mambucaba, 2000)}, volume~51 of
	{\em Progr. Probab.}, pages 185--204. Birkh\"auser Boston, Boston, MA, 2002.
	
	\bibitem{Prolhac16}
	S.~Prolhac.
	\newblock Finite-time fluctuations for the totally asymmetric exclusion
	process.
	\newblock {\em Phys. Rev. Lett.}, 116:090601, 2016.
	
\end{thebibliography}

\def\cydot{\leavevmode\raise.4ex\hbox{.}}

\end{document}